\numberwithin{equation}{section}
\theoremstyle{plain}
\newtheorem{theorem}[equation]{Theorem}
\newtheorem{lemma}[equation]{Lemma}
\newtheorem{proposition}[equation]{Proposition}
\newtheorem{corollary}[equation]{Corollary}
\theoremstyle{definition}
\newtheorem{definition}[equation]{Definition}
\theoremstyle{remark}
\newtheorem{remark}[equation]{Remark}
\newcommand{\bB}{\mathbb{B}}
\newcommand{\bC}{\mathbb{C}}
\newcommand{\bF}{\mathbb{F}}
\newcommand{\bN}{\mathbb{N}}
\newcommand{\bR}{\mathbb{R}}
\newcommand{\cB}{\mathcal{B}}
\newcommand{\cC}{\mathcal{C}}
\newcommand{\cF}{\mathcal{F}}
\newcommand{\cT}{\mathcal{T}}
\newcommand{\cX}{\mathcal{X}}
\newcommand{\intform}[1]{\tilde{ #1 } }
\newcommand{\lt}{\mathtt{lt}}
\newcommand{\regrep}[1]{\Lambda^{#1}}
\newcommand{\regrepEN}[1]{\lambda_{#1}}
\newcommand{\rmu}{{r^{-1}}}
\newcommand{\smu}{{s^{-1}}}
\newcommand{\tmu}{{t^{-1}}}
\newcommand{\trace}{\operatorname{tr}}
\providecommand{\cspn}{\overline{\mathop{\rm span}}}
\providecommand{\Ind}{{\mathop{\rm Ind}}}
\providecommand{\red}{{\mathop{\rm r}}}
\providecommand{\spn}{{\mathop{\rm span}}}
\providecommand{\supp}{{\mathop{\rm supp}}}
\date{\today}
\begin{document}

\title{Cross-sectional C*-algebras associated to subgroups}
\author{Dami\'{a}n Ferraro}
\email{dferraro@litoralnorte.udelar.edu.uy}
\address{Departamento de Matemática y Estadística del Litoral, CENUR
	Litoral Norte, Universidad de la República,  Uruguay}
\date{\today}
\subjclass[2020]{46L99 (Primary) 22D25 (Secondary)}
\keywords{Fell bundles, induction, weak containment,  absorption principle}

\begin{abstract}
Given a Fell bundle $\cB=\{B_t\}_{t\in G}$ over a locally compact and Hausdorff group $G$ and a closed subgroup $H\subset G,$ we construct quotients $C^*_{H\uparrow \cB}(\cB)$ and $C^*_{H\uparrow G}(\cB)$ of the full cross-sectional C*-algebra $C^*(\cB)$ analogous to Exel-Ng's reduced algebras $C^*_\red(\cB)\equiv C^*_{\{e\}\uparrow \cB}(\cB)$ and $C^*_R(\cB)\equiv C^*_{\{e\}\uparrow G}(\cB).$
An absorption principle, similar to Fell's one, is used to give conditions on $\cB$ and $H$ (e.g. $G$ discrete and $\cB$ saturated, or $H$ normal) ensuring $C^*_{H\uparrow \cB}(\cB)=C^*_{H\uparrow G}(\cB).$
The tools developed here enable us to show that if the normalizer of $H$ is open in $G$ and $\cB_H:=\{B_t\}_{t\in H}$ is the reduction of $\cB$ to $H,$ then $C^*(\cB_H)=C^*_\red(\cB_H)$ if and only if $C^*_{H\uparrow \cB}(\cB)=C^*_\red(\cB);$ the last identification being implied by $C^*(\cB)=C^*_\red(\cB).$
We also prove that if $G$ is inner amenable and $C^*_\red(\cB)\otimes_{\max} C^*_\red(G)=C^*_\red(\cB)\otimes C^*_\red(G),$ then $C^*(\cB)=C^*_\red(\cB).$
\end{abstract}

\maketitle

\section{Introduction}

In the 50's and early 60's Mackey and Blattner \cite{Mk52,blattner1961induced} described an induction process $V \rightsquigarrow \Ind_H^G(V)$ that creates a unitary representation $\Ind_H^G(V)$ of a locally compact and Hausdorff (LCH) group $G$ out of a unitary representation $V$ of a closed subgroup $H\subset G$ (we write $H\leqslant G$).

When translated into the language of C*-algebras, $V \rightsquigarrow \Ind_H^G(V)$ becomes an  induction process $\pi \rightsquigarrow \Ind_{C^*(H)}^{C^*(G)}(\pi)$ for  representations of (full) group C*-algebras.
Rieffel noticed this in \cite{Rf74}, where he presented an (abstract) induction process for representations of C*-algebras he used to describe $\pi \rightsquigarrow \Ind_{C^*(H)}^{C^*(G)}(\pi).$
Later, in \cite{Fell}, Fell presented two induction theories: one for Banach *-algebraic bundles and one for *-algebras; generalizing the works of Mackey-Blattner and Rieffel (respectively).
Most of what we need is contained in Fell's original notes, but we prefer to use the standard reference \cite{FlDr88}.

Fell had a problem the other authors did not: not all the representations can be induced.
This led him to the concept of \textit{inducible} representation of a *-algebra and a notion of \textit{positivity} for representations of Banach *-algebraic bundles.
The respective definitions themselves \cite[XI 4.9 \& 8.6]{FlDr88} reveal that Fell's theories are related in the same way Mackey, Blattner and Rieffel's are.
This is made explicit in \cite[XI 9.26 \& XI 10]{FlDr88}.

We make use of all definitions and results of \cite{FlDr88} (e.g. integration, weak equivalence and weak containment of *-representations).
All the groups considered in this work are LCH and we abbreviate ``*-representation'' and ``unitary representation'' to representation.
Whenever $\cX=\{X_t\}_{t\in P}$ is a Banach bundle, $P$ stands for the base space and $X_t$ for the fiber over $t\in P.$
The set of continuous cross-sections of with compact support will be denoted $C_c(\cX).$

Fell's absorption principle states that given a saturated Banach *-algebraic bundle $\cB=\{B_t\}_{t\in G};$ $H\leqslant G;$ a non degenerate representation $S$ of $\cB$ and a unitary representation $U$ of $H,$ it follows that the representation $S|_{\cB_H}\otimes U $ of the reduction $\cB_H:=\{B_t\}_{t\in H}$ is $\cB-$positive and the respective induced representation of $\cB$ is unitary equivalent to $S\otimes \Ind_H^G(U),$ which we write
\begin{equation}\label{equ:Fell abst prin}
S\otimes \Ind_H^G(U)\cong \Ind_H^\cB(S|_{\cB_H}\otimes U).
\end{equation}
For $H=\{e\}$ ($e$ being the unit of $G$) and $U\colon H\to \bC$ trivial, $\Ind_H^G(U)$ is the left regular representation $\lambda\colon G\to \bB(L^2(G))$ and we get $S\otimes \lambda \cong \Ind_{\{e\}}^\cB(S|_{B_e}).$

The representations of the form $S\otimes \lambda$ are explicitly considered by Exel and Ng in \cite{ExNg} and, as we shall see later, those of the form $\Ind_{\{e\}}^\cB(\phi)$ appear as $ \regrepEN{\cB}\otimes_\phi 1,$ with $\lambda_\cB\colon C^*(\cB)\to \bB(L^2_e(\cB))$ being the reduced representation of \cite[Definition 2.7]{ExNg}.
Exel and Ng used those two families of representations to give natural definitions of the reduced cross-sectional C*-algebra of a Fell bundle, $C^*_\red(\cB)$ and $C^*_R(\cB)$, which turn up to be isomorphic \cite[Theorem 2.14]{ExNg}.
Exel-Ng's ideas go back to Raeburn's work on coactions \cite{Raeburn1992Crossed}, as pointed out in \cite[p 515]{ExNg}.

The construction of $C^*_R(\cB)$ can be extended considerably by using the ideas of \cite{KaLAQu2013} of producing exotic crossed products for C*-dynamical systems out of quotients $Q$ of the full group C*-algebra $C^*(G).$
Say we have a LCH group $G$ and a quotient map $q\colon C^*(G)\to Q.$
Let $\cB=\{B_t\}_{t\in G}$ be a Fell bundle and take faithful non degenerate representations $\pi\colon C^*(\cB)\to \bB(X)$ and $\rho\colon Q\to \bB(Y),$ so $\pi\otimes\rho\colon C^*(\cB)\otimes Q\to \bB(X\otimes Y)$ is faithful and non degenerate\footnote{We are considering minimal tensor products.}.
Both $\pi$ and $\rho\circ q$ are the integrated forms (or disintegrate to) representations $S\colon \cB\to \bB(X)$ and $U\colon G\to \bB(Y).$
For the canonical representation $\iota^q\colon G\to M(Q)$ we have $[S\otimes U]_b (\pi(f)\otimes \rho(z)) = \pi(bf)\otimes \rho(\iota^q(t)z),$ for all $b\in B_t,$ $f\in C_c(\cB)$ and $z\in Q.$
Hence, the image of the integrated form\footnote{The integrated form of a representation $S$ in indicated by adding $\tilde{ }$ somewhere inside or over the expression $S.$} $S\intform{\otimes}U\colon C^*(\cB)\to \bB(X\otimes Y)$ (of $S\otimes U$) is contained in $M(C^*(\cB)\otimes Q)$ and 
$\delta^{\cB q}:=S \intform{\otimes} U\colon C^*(\cB)\to M(C^*(\cB)\otimes Q)$ is independent of $(\pi,\rho).$
We define the \textit{$q-$cross-sectional C*-algebra} $C^*_q(\cB):=\delta^{\cB q}(C^*(\cB)).$
If $\cB$ is the semidirect product bundle of a C*-dynamical system $(A,G,\alpha),$ $C^*_q(\cB)\equiv A\rtimes_{q\alpha}G$ is a quotient of the full crossed product $C^*(\cB)=A\rtimes_\alpha G.$

Take for example the integrated form $ \intform{\lambda}\colon C^*(G)\to C^*_\red(G)$ of the left regular representation $\lambda\colon G\to \bB(L^2(G)).$
By definition, $C^*_{\intform{\lambda}}(\cB)=C^*_R(\cB).$
After recalling $\lambda$ is induced by the trivial representation of $\{e\},$ it is natural to replace  $\{e\}$ with any other subgroup $H\leqslant G$ and consider $q\colon C^*(G)\to C^*(G)/I,$ with $I$ the intersection of the kernels of all integrated forms of representations induced from $H.$
This associates an \textit{$H-$cross-sectional C*-algebra} $C^*_{H\uparrow G}(\cB):=C^*_q(\cB)$ to every Fell bundle $\cB$ over $G.$

There is still another natural quotient $C^*_{H\uparrow \cB}(\cB):=C^*(\cB)/J$ one may associate to $H.$
Take for $J$ the intersection of all the kernels of integrated forms of representations of $\cB$ induced from $\cB-$positive representations of $\cB_H.$
In case $\cB$ is saturated, all representations of $\cB_H$ count \cite[p 1159]{FlDr88}.
This is also the case if $H=\{e\}.$

We claim $C^*_{\{e\}\uparrow \cB}(\cB)=C^*_\red(\cB).$
Indeed, given a representation $\phi$ of $B_e\equiv \cB_H,$ the abstractly induced representation $\Ind_{B_e\uparrow C_c(\cB)}^p(\phi)$ of \cite{FlDr88} is $(\lambda_\cB\otimes_\phi 1)|_{C_c(\cB)}.$
By \cite[XI 9.26]{FlDr88}, $\lambda_\cB\otimes_\phi 1$ is the integrated form  $\intform{\Ind}_{\{e\}}^\cB(\phi).$
Hence, for all $f\in C^*(\cB) $ we have $\|\lambda_\cB(f)\|\geq \| \lambda_\cB(f)\otimes_\phi 1 \|=\|\intform{\Ind}_{\{e\}}^\cB(\phi)_f\|$  with equality if $\phi$ is faithful.
It is then clear that $J=\ker(\regrepEN{\cB})$ and $C^*_{\{e\}\uparrow \cB}(\cB)=\lambda_\cB(C^*(\cB))\equiv C^*_\red(\cB).$

Fell's absorption principle comes into play when we want to compare $C^*_{H\uparrow \cB}(\cB)$ and $C^*_{H\uparrow G}(\cB),$ for a general $H\leqslant G.$
To start with, we have canonical quotient maps
\begin{align}\label{equ: quotient maps}
\xymatrix{ C^*(\cB) \ar[r]^{q^{H\uparrow \cB}} &  C^*_{H\uparrow\cB}(\cB) }&  & \xymatrix{ C^*(\cB) \ar[r]^{q^{H\uparrow G}_\cB} & C^*_{H\uparrow G}(\cB)}
\end{align}

Assume $\cB$ is \textbf{saturated} and take faithful representations $\intform{S}$ and $\intform{U}$ of $C^*(\cB)$ and $C^*(H),$ respectively.
By \cite[XI 12.4]{FlDr88}, $\intform{\Ind}_H^G(U)$ factors through a faithful representation of $C^*(G)/I_H$ and our construction of $C^*_{H\uparrow G}(\cB)$ implies $\| q^{H\uparrow G}_\cB(f) \| = \|[S\intform{\otimes} U]_f\|$ for all $f\in C^*(\cB).$ 
By Fell's absorption principle, $\|[S\intform{\otimes }U]_f\| = \|\intform{\Ind}_H^\cB(S|_{\cB_H}\otimes U)_f\|\leq \| q^{H\uparrow \cB}(f) \|.$
This implies $\| q^{H\uparrow G}_\cB(f) \|\leq \| q^{H\uparrow \cB}(f) \|.$
In other words, there exists a unique quotient map $\pi_H^{\cB}$ making of
\begin{equation}\label{equ: pi uparrow H}
\xymatrix{  &C^*(\cB)\ar[dl]_{q^{H\uparrow \cB}}\ar[dr]^{q^{H\uparrow G}_\cB} & \\ C^*_{H\uparrow \cB}(\cB) \ar[rr]_{\pi_H^{\cB} } & & C^*_{H\uparrow G}(\cB) }
\end{equation}
a commutative diagram.

One of the main results of \cite{ExNg} is that $\pi_{\{e\}}^{\cB}\colon C^*_\red(\cB)\to C^*_R(\cB)$ is an isomorphism, even if $\cB$ is not saturated (in which case the mere existence of $\pi_{\{e\}}^{\cB}$ is in question).
When $\cB$ is saturated and $H$ is normal we can get the same conclusion  out of \eqref{equ:Fell abst prin}.
Indeed, let $T$ be a representation of $\cB_H$ with faithful integrated form.
By \cite[XI 12.8]{FlDr88}, $T\preceq \Ind_H^\cB(T)|_{\cB_H}\preceq S|_{\cB_H}$ and this implies $\intform{S|_{\cB_H}}$ is faithful.
The continuity of the induction process with respect to the regional topology \cite[XI 12.4]{FlDr88} implies $\|q^{H\uparrow \cB}(f)\|=\|\Ind_H^\cB(S|_{\cB_H})_f\|$ for all $f\in C^*(\cB).$
For the trivial representation $\kappa\colon H\to \bC,$ Fell's absorption principle gives
\begin{align}
\|q^{H\uparrow \cB}(f)\|
& =\| \Ind_H^\cB(S|_{B_H}) \|=\| [S \intform{\otimes}\Ind_H^G(\kappa)]_f \|\leq \| [S\intform{\otimes} \Ind_H^G(U)]_f \| \nonumber \\
& = \|q^{H\uparrow G}_\cB(f)\|;\label{equ:pi is an iso}
\end{align}
which clearly implies $\pi_H^{\cB}$ is isometric.
These arguments can not be extended to non-normal subgroups.
Consider, for example, $\cB$ as the trivial bundle over $G$ with constant fiber $\bC$ and $H\leqslant G$ such that the canonical *-homomorphism $C^*(H)\to M(C^*(G))$ is not  faithful.

How was that Exel and Ng go to define $\pi_{\{e\}}^{\cB}$ and prove it is faithful even for non saturated $\cB$?
The short answer is they developed a version of \eqref{equ:Fell abst prin} where $\cong$ is replaced by a weak equivalence $\approx$  of representations.
\textit{Exel-Ng's absorption principle} states that
for any given non degenerate representation $S$ of $\cB,$ 
\begin{equation}\label{equ:Exel Ng abst prin}
S\otimes \lambda\approx \Ind_{\{e\}}^\cB(S|_{B_e}).
\end{equation}
The statement and proof of this claim is implicit in that of \cite[Theorem 2.14]{ExNg} when the authors show $\lambda_\cB(a)\otimes_{S|_{B_e}} 1=0\Leftrightarrow [S \intform{\otimes} \lambda]_a=0.$
Notice \eqref{equ:Exel Ng abst prin} suffices to define $\pi_{\{e\}}^\cB$ and make \eqref{equ:pi is an iso} work.

All the facts presented before led us to the following questions concerning a general Fell bundle $\cB=\{B_t\}_{t\in G}$ (saturated or not) and $H\leqslant G.$

\begin{enumerate}
	\item\label{item: all rep are B positive} Are all representations of $\cB_H$ inducible to $\cB$ (i.e. $\cB-$positive)?
	
	\item\label{item: q construction of L2H} Can one imitate Exel-Ng's construction of $\lambda_\cB\colon C^*(\cB)\to \bB(L^2_e(\cB))$ using $H$ instead of $\{e\}$?
	More specifically, we ask for the possibility of constructing a right $C^*(\cB_H)-$Hilbert module $L^2_H(\cB)$ and  *-homomorphism 
	\begin{equation*}
	\regrep{H\cB}\colon C^*(\cB)\to \bB(L^2_H(\cB))
	\end{equation*} such that, for every  representation $T$ of $\cB_H,$ one has $\regrep{H\cB}\otimes_{\intform{T}}1 =\intform{\Ind}_{H}^\cB(T)$.
	In case this can be done, it would follow immediately that $C^*_{H\uparrow \cB}(\cB)=\regrep{H\cB}(C^*(\cB)).$
	
	\item\label{item: q general abs prin} Is there any general weak form of \eqref{equ:Fell abst prin}? We mean something similar to \eqref{equ:Exel Ng abst prin} which one may use to define $\pi_H^{\cB}.$
	
	\item\label{item: q universal algebras agree} Assuming questions \eqref{item: q construction of L2H} and \eqref{item: q general abs prin} admit affirmative answers, under which circumstances is $\pi_H^{\cB}$ an isomorphism?
	In other words, when is $\regrep{H\cB}(C^*(\cB))$ universal for the representations of $\cB$ of the form $S\intform{\otimes} \Ind_H^G(U)$?
	We know this is true if $H$ is normal and $\cB$ saturated, is saturation really necessary?
\end{enumerate}

The outline of this article is as follows.
After this introduction, the first main theorem gives an affirmative answer to question \eqref{item: all rep are B positive} and right after that we construct the *-homomorphism $\regrep{H\cB}$ of question \eqref{item: q construction of L2H}.
We then prove $C^*_{H\uparrow\cB}(\cB)$ and $C^*_{H\uparrow G}(\cB)$ have certain universal properties for different families of representations of $\cB.$
Those characterizations are used to compute $C^*_{H\uparrow\cB}(\cB)$ and $C^*_{H\uparrow G}(\cB)$ in a concrete example revealing that many of our theorems fail if some hypotheses are removed.
As an application we construct certain ``exotic coactions'' $\delta\colon C^*_{H\uparrow G}(\cB)\to M(C^*_{H\uparrow G}(\cB)\otimes Q^G_H)$ for specific quotients $Q^G_H$ of $C^*(G).$
In the last part of section \ref{sec:universal} we show that if $G$ is inner amenable and $C^*_\red(\cB)\otimes_{\max} C^*_\red(G)=C^*_\red(\cB)\otimes C^*_\red(G),$ then $C^*(\cB)=C^*_\red(\cB).$

Our answer to question \eqref{item: q general abs prin} occupies most of section \ref{sec: abs prin}, which ends with a series of corollaries.
In one of them we construct a *-homomorphism that, in some situations, happens to be the inverse of the $\pi^\cB_H$ of \eqref{equ: pi uparrow H}.
Another of the corollaries is an extension of Exel-Ng's absorption principles to normal subgroups other than $\{e\}.$

In the fifth and final section we study the dependence of $C^*_{H\uparrow \cB}(\cB)$ and $C^*_{H\uparrow G}(\cB)$ with respect to $H.$
The main result states that if the normalizer of $H$ is open in $G,$ then $C^*(\cB_H)=C^*_\red(\cB_H)$ if and only if $C^*_{H\uparrow \cB}(\cB)= C^*_\red(\cB).$
As a corollary of this we get that $C^*(\cB)=C^*_\red(\cB)$ implies $C^*(\cB_H)=C^*_\red(\cB_H).$

\section{Positivity and Induction}\label{sec:positivity}

We take from \cite{FlDr88} all the definitions, constructions and results concerning representations, C*-algebras, Banach and C*-algebraic bundles (the latter are called Fell bundles).
Any notation different from that of \cite{FlDr88} will be introduced along with its corresponding explanation.

Given a (right or left) Hilbert module $X,$ we denote $\bB(X)$ the C*-algebra of adjointable maps from $X$ to $X.$ 
Whenever $A$ is a C*-algebra and $\pi\colon A\to \bB(X)$ a non degenerate *-homomorphism, $\overline{\pi}\colon M(A)\to \bB(X)$ stands for the unique extension of $\pi.$
We use the symbol $\langle\ ,\ \rangle$ to denote inner products (except when we recall the definition of Banach *-algebraic bundle).
All the groups in this article are assumed to be LCH and integration with respect to left invariant Haar measures will be indicated by $dt.$
The modular function of a group $G$ will be denoted  $\Delta_G.$

Whenever $A$ and $B$ are C*-algebras and there exists a canonical *-homomorphism $\pi\colon A\to B,$ the expression $A=B$ means $\pi$ is an isomorphism and we think $a=\pi(a)$ for all $a\in A.$
This is the case when we write $C^*(G)=C^*_\red(G)$ (i.e. $G$ is amenable) or $A\otimes B=A\otimes_{\max}B.$

When we say $\cB=\{B_t\}_{t\in G}$ is a Banach *-algebraic bundle we are implicitly assuming the existence of a structure $\langle B,\pi,\cdot\rangle $  making $\cB\equiv \langle B,\pi,\cdot\rangle$ a Banach *-algebraic bundle in the sense of \cite[VIII 3.1]{FlDr88}.
When we write $b\in \cB$ we mean $b\in B.$
Notice that the fibre $B_e$ over the unit $e\in G$ is a Banach *-algebra.
By definition, $\cB=\{B_t\}_{t\in G}$ is a Fell bundle (i.e. C*-algebraic bundle) if $\|b^*b\|=\|b\|^2$ and $b^*b$ is positive in the C*-algebra $B_e,$ for all $b\in B.$
Given a Banach *-algebraic (Fell) bundle $\cB=\{B_t\}_{t\in G}$ and $H\leqslant G,$ the reduction $\cB_H:=\{B_t\}_{t\in H}$ is a Banach *-algebraic (Fell, respectively) bundle with the structure inherited from $\cB.$

The concrete and abstract induction processes of \cite{FlDr88} can be applied to any $\cB-$positive representation $S\colon \cB_H\to \bB(X)$ and give (two) unitary equivalent representations of $\cB$ \cite[XI 9.26]{FlDr88}, any of which we denote $\Ind_{H}^\cB(S)$ and call \textit{the} representation of $\cB$ induced by $S$.
The definition of $\cB-$positivity we adopt is that of \cite[XI 8.6]{FlDr88}.
By \cite[XI 8.9]{FlDr88}, a representation $S\colon \cB_H\to \bB(X)$ is $\cB-$positive if for every coset $rH \in G/H\equiv \{tH\colon t\in G\},$ every  integer $n>0,$ all $b_1,\ldots,b_n\in \cB_{rH},$ and all $\xi_1,\ldots,\xi_n\in X,$
\begin{equation}\label{equ:positivity}
\sum_{i,j=1}^n \langle S_{b_j^*b_i}\xi_i,\xi_j\rangle \geq 0.
\end{equation}
We give an alternative formulation in corollary \ref{cor: weak form of characterization of positivity}.

In \cite[pp 1159]{FlDr88} Fell proves the theorem below for saturated bundles and asks if saturation can be removed from the hypotheses.
It indeed can and, as pointed out by Fell in the same page, this leads to a simpler formulation of positivity (corollary \ref{cor: weak form of characterization of positivity}).

\begin{theorem}\label{thm:positivity}
	If $\cB=\{B_t\}_{t\in G}$ is a Fell bundle and $H\leqslant G,$ then all the representations of $\cB_H$ are $\cB-$positive.
\end{theorem}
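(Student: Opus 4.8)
The inequality \eqref{equ:positivity} says exactly that the matrix $[\,b_j^*b_i\,]_{i,j}$, whose entries lie in the subgroup fibres $B_{r_j^{-1}r_i}\subseteq\cB_H$, acts as a positive operator after one applies $S$. The first thing I would record is that this is automatic whenever $S$ is the restriction to $\cB_H$ of a representation $\pi$ of the whole bundle: for then $\sum_{i,j}\langle \pi_{b_j^*b_i}\xi_i,\xi_j\rangle=\sum_{i,j}\langle \pi_{b_i}\xi_i,\pi_{b_j}\xi_j\rangle=\|\sum_i\pi_{b_i}\xi_i\|^2\ge0$, using only that $b_i\in\cB$ gives an operator $\pi_{b_i}$ and that $\pi_{b_j}^*\pi_{b_i}=\pi_{b_j^*b_i}$. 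So the whole problem is to manufacture, for an \emph{arbitrary} representation $S$ of $\cB_H$, a Stinespring-type substitute for the missing operators: a Hilbert space $\mathcal K$ and maps $V_i\colon X\to\mathcal K$ with $V_j^*V_i=S_{b_j^*b_i}$. Given such $V_i$, the sum again collapses to $\|\sum_iV_i\xi_i\|^2\ge0$.

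The unconditional ingredient is the single–fibre case. Each fibre $B_t$ is a right Hilbert $B_e$-module under $\langle a,b\rangle=a^*b\in B_e$, so the GNS/Stinespring construction for $\sigma:=S|_{B_e}$ produces, for every $t$, a Hilbert space $B_t\otimes_\sigma X$ and a map $b\mapsto\Phi_t(b)\in\bB(X,B_t\otimes_\sigma X)$ with $\Phi_t(a)^*\Phi_t(b)=\sigma(a^*b)$, without any hypothesis on $\cB$. This already settles \eqref{equ:positivity} when all the $b_i$ sit over a single point of $rH$, since then $b_j^*b_i\in B_e$ and $S_{b_j^*b_i}=\sigma(b_j^*b_i)$. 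The genuine content is therefore the compatibility \emph{across distinct points} of the coset: the operators $V_i$ must live in one common $\mathcal K$ and must reproduce $S_{b_j^*b_i}$ for $b_i\in B_{r_i}$, $b_j\in B_{r_j}$ with $r_i\ne r_j$, where $b_j^*b_i$ lands over a nontrivial element of $H$ and is felt through the full $\cB_H$-action of $S$.

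For a saturated bundle this is Fell's argument: the density of $B_r\,B_{r^{-1}r_i}$ in $B_{r_i}$ lets one factor each coset element through $B_r$ and thereby reduce $S_{b_j^*b_i}$ to products of operators $S_{c_i}$ with $c_i\in\cB_H$, from which the $V_i$ are read off. The main obstacle is precisely the removal of saturation, where this factorisation disappears. One cannot repair it by passing to the multiplier algebra: there $[\,b_j^*b_i\,]_{i,j}$ is transparently positive (it is of the form $W^*W$ for the row $W=(b_1,\dots,b_n)$ of multipliers of $C^*(\cB)$), but positivity in $M_n\!\big(M(C^*(\cB))\big)$ need not descend to positivity in $M_n\!\big(C^*(\cB_H)\big)$, since the canonical map $C^*(\cB_H)\to M(C^*(\cB))$ can fail to be faithful — exactly the phenomenon the introduction flags for the trivial bundle. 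Equivalently, the tempting shortcut ``extend $S$ to a representation of $\cB$'' is unavailable, because a general representation of $\cB_H$ is not even weakly contained in the restrictions of representations of $\cB$.

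My plan to get around this is to build the dilation coset by coset, gluing the single–fibre maps $\Phi_{r_i}$ through the representation $S$ rather than through multiplication in $\cB$. I would pass to the integrated positivity underlying the inducing construction of \cite[XI 8.6]{FlDr88}, which asks (up to the standard modular normalisations) that $\sum_{k,l}\int_H\int_G\langle S_{f_l(s)^*f_k(sh)}\xi_k,\xi_l\rangle\,ds\,dh\ge0$ for all $f_1,\dots,f_m\in C_c(\cB)$ and $\xi_k\in X$. After the substitution $u=sh$ the integral runs over pairs $(s,u)$ in a common coset, and the goal is to complete the square, rewriting it as $\int_{G/H}\big\|\int_{\mathrm{coset}}\Psi\big\|^2$ for a suitable $\mathcal K$-valued field $\Psi$ assembled from the $\Phi_u$; that identity makes positivity manifest and avoids any factorisation. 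The crux — and the step I expect to fight hardest — is producing a \emph{single} Hilbert space $\mathcal K$ with compatible maps $\Phi_u\colon B_u\to\bB(X,\mathcal K)$ over a whole coset satisfying $\Phi_s(a)^*\Phi_u(b)=S_{a^*b}$, since constructing this field is, on the face of it, just as strong as the positivity itself. I would attempt it by an approximate-identity argument in $B_e$, expressing each cross term $S_{a^*b}$ as a norm limit of $S_{a^*u_\lambda b}$ and exploiting the defining relations $S_c^*S_c=\sigma(c^*c)$ of the $\cB_H$-representation to push all positivity down to the unit fibre $B_e$, where it is unconditional.
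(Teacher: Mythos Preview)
Your proposal correctly isolates the target --- produce operators $V_i\colon X\to\mathcal K$ with $V_j^*V_i=S_{b_j^*b_i}$ --- but it stops short of producing them. The approximate-identity manoeuvre you sketch does not move the problem: if $a\in B_s$, $b\in B_u$ and $u_\lambda\in B_e$, then $a^*u_\lambda b$ still lies in $B_{s^{-1}u}$, so $S_{a^*u_\lambda b}$ is no closer to the unit fibre than $S_{a^*b}$ was. And you explicitly acknowledge that constructing the field $\Phi_u$ with $\Phi_s(a)^*\Phi_u(b)=S_{a^*b}$ is ``just as strong as the positivity itself''; indeed it is, and nothing later in your outline breaks that circularity.

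The idea you are missing is that the factorisation you declared lost without saturation survives \emph{at the matrix level}. Put $b_j\in B_{ts_j}$ with $s_j\in H$ and form the finite matrix algebra
\[
\mathbb M_{t\mathfrak s}(\cB)=\{(M_{ij})\colon M_{ij}\in B_{(ts_i)^{-1}(ts_j)}\},
\]
which is a C*-algebra under matrix multiplication and $*$-transpose. The crucial, almost tautological, observation is that $(ts_i)^{-1}(ts_j)=s_i^{-1}s_j\in H$, so $\mathbb M_{t\mathfrak s}(\cB)=\mathbb M_{\mathfrak s}(\cB_H)$: every entry already lives over $H$. The matrix $M=(b_i^*b_j)_{i,j}$ is positive in this C*-algebra --- one checks this by letting $\mathbb M_{t\mathfrak s}(\cB)$ act on itself viewed as a right $B_e$-Hilbert module via the trace inner product, where $\langle\phi_M N,N\rangle=\sum_i(\sum_j b_jN_{ji})^*(\sum_k b_kN_{ki})\ge0$ by the Fell-bundle axiom $c^*c\ge0$. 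Now take $N=M^{1/2}\in\mathbb M_{\mathfrak s}(\cB_H)$; its entries $N_{ki}$ lie in $\cB_H$, and the operators $V_i\xi:=(T_{N_{1,i}}\xi,\ldots,T_{N_{n,i}}\xi)\in X^n$ are exactly the Stinespring maps you were after: $V_j^*V_i=\sum_k T_{N_{kj}}^*T_{N_{ki}}=T_{(N^*N)_{ji}}=T_{b_j^*b_i}=S_{b_j^*b_i}$. So saturation is irrelevant; what matters is that a positive matrix in a C*-algebra has a square root, and the C*-algebra in question was built from $\cB_H$ all along.
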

\begin{proof}
	Let $T\colon \cB_H\to \bB(X)$ be a representation.
	Fix $t\in G$ and elements $b_1,\ldots,b_n\in \cB_{tH}.$
	Take $s_1,\ldots,s_n\in H$ such that $b_j\in B_{ts_j}$ ($j=1,\ldots, n$).
	Set $\mathfrak{s}:=(s_1,\ldots, s_n)$ and $t\mathfrak{s}:=(ts_1,\ldots,ts_n)$ and define the matrix space
	\begin{equation*}
	\mathbb{M}_{t\mathfrak{s}}(\cB):=\{ (M_{i,j})_{i,j=1}^n\colon M_{i,j}\in B_{(ts_i)^{-1}(ts_j)},\ \forall\ i,j=1,\ldots,n \}.
	\end{equation*}
	It is of key importance to notice that  $\mathbb{M}_{t\mathfrak{s}}(\cB)\equiv \mathbb{M}_{\mathfrak{s}}(\cB_H).$
	
	By \cite[Lemma 2.8]{AbFrrEquivalence}, $\mathbb{M}_{t\mathfrak{s}}(\cB)$ is a C*-algebra with usual matrix multiplication as product and $*-$transpose as involution.
	A quick way of proving this is by taking a representation $R\colon \cB\to \bB(Y)$ with all the restrictions $R|_{B_t}$ being isometric (which exists by \cite[VIII 16.10]{FlDr88}) and to identify $\mathbb{M}_{t\mathfrak{s}}(\cB)$ with the concrete C*-algebra
	\begin{equation*}
	\{ (R_{M_{i,j}})_{i,j=1}^n\colon M_{i,j}\in B_{s_i^{-1}s_j},\ \forall\ i,j=1,\ldots,n\}\subset \bB(Y^n).
	\end{equation*}
	
	The matrix $M:=(b_i^*b_j)_{i,j=1}^n$ belongs to $\mathbb{M}_{t\mathfrak{s}}(\cB)$ and, regarding $\cB$ as a $\cB-\cB-$equivalence bundle, we can easily adapt the proof of \cite[Lemma 2.8]{AbFrrEquivalence} to show that $M$ is positive in $\mathbb{M}_{t\mathfrak{s}}(\cB)\equiv \mathbb{M}_{\mathfrak{s}}(\cB_H).$
	An alternative (direct) proof is as follows.
	
	Notice that for all $M=(M_{i,j})_{i,j=1}^n\in \mathbb{M}_{t\mathfrak{s}}(\cB)$ and all $i=1,\ldots,n$ we have $M_{i,i}\in B_e.$
	So we may define a \textit{trace function} $\trace\colon \mathbb{M}_{t\mathfrak{s}}(\cB)\to B_e$ by $\trace(M):=\sum_{i=1}^n M_{i,i}.$
	The operations 
	\begin{align*}
	\mathbb{M}_{t\mathfrak{s}}(\cB)\times & B_e\to B_e  & (M,b)&\mapsto (M_{i,j}b)_{i,j=1}^n\\
	\langle\ ,\ \rangle_{B_e}\colon \mathbb{M}_{t\mathfrak{s}}(\cB)\times & \mathbb{M}_{t\mathfrak{s}}(\cB)\to B_e & (M,N)&\mapsto \langle M,N\rangle_{B_e}:=\trace(M^*N)
	\end{align*}
	make of $\mathbb{M}_{t\mathfrak{s}}(\cB)$ a full right $B_e-$Hilbert module which we denote $\mathbb{M}_{t\mathfrak{s}}(\cB)_{B_e}.$
	
	For every $M\in \mathbb{M}_{t\mathfrak{s}}(\cB)$ the operator $\phi_M\colon \mathbb{M}_{t\mathfrak{s}}(\cB)_{B_e}\to \mathbb{M}_{t\mathfrak{s}}(\cB)_{B_e}, N\mapsto MN,$ is adjointable because for all $N,P\in \mathbb{M}_{t\mathfrak{s}}(\cB)_{B_e},$ 
	\begin{equation*}
	\langle \phi_M N,P\rangle_{B_e}=
	\trace((MN)^*P)=\trace(N^*(M^*P))=\langle N,\phi_{M^*}P\rangle_{B_e}.
	\end{equation*}
	Moreover, the natural representation $\phi\colon \mathbb{M}_{t\mathfrak{s}}(\cB)\to \bB(\mathbb{M}_{t\mathfrak{s}}(\cB)_{B_e}),$ $M\mapsto \phi_M,$ is a *-homomorphism.
	
	To prove that $\phi$ is faithful we take an approximate unit $\{e_i\}_{i\in I}$ of $\mathbb{M}_{t\mathfrak{s}}(\cB).$
	For all $M\in \mathbb{M}_{t\mathfrak{s}}(\cB)$ we have
	$\sum_{i,j=1}^n (M_{i,j})^*M_{i,j} = \trace(M^*M)=\lim_i\trace((Me_i)^*M)=\lim_i \langle \phi_Me_i,M\rangle_{B_e}.$
	Thus $\phi_M=0$ implies $\sum_{i,j=1}^n (M_{i,j})^*M_{i,j}=0$ and this yields $M=0.$
	
	Now that we know $\phi$ is faithful, to prove that  $M:=(b_i^*b_j)_{i,j=1}^n \geq 0 $ (in $\mathbb{M}_{t\mathfrak{s}}(\cB)\equiv \mathbb{M}_{\mathfrak{s}}(\cB_H)$) it suffices to show that $\phi_M\geq 0.$
	This is the case because for all $N\in \mathbb{M}_{t\mathfrak{s}}(\cB)_{B_e}$ we have
	\begin{equation*}
	\langle \phi_M N,N\rangle_{B_e} =
	\trace((\phi_M N)^*N)=\trace(N^*M^*N)=\sum_{i=1}^n\left(\sum_{j=1}^n b_jN_{j,i}\right)^*\left(\sum_{k=1}^n b_kN_{k,i}\right)\geq 0;
	\end{equation*}
	where the last inequality follows from the fact that the definition of Fell bundle requires $c^*c$  to be positive in $B_e$ for all $c\in B.$
	
	If $N:= M^{1/2}\in\mathbb{M}_\mathfrak{ts}(\cB)\equiv  \mathbb{M}_\mathfrak{s}(\cB_H),$ then all the entries $N_{i,j}$ of $N$ belong to $\cB_H$ and for all $\xi_1,\ldots, \xi_n\in X$ we have
	\begin{equation*}
	\sum_{i,j=1}^n\langle T_{b_j^*b_i}\xi_i,\xi_j\rangle 
	= \sum_{i,j,k=1}^n \langle T_{{N_{k,j}}^*N_{k,i}} \xi_i,\xi_j\rangle
	= \sum_{k=1}^n \langle \sum_{i=1}^n T_{N_{k,i}}\xi_i,\sum_{j=1}^nT_{N_{k,j}}\xi_j\rangle\geq 0;
	\end{equation*}
	proving that $T$ is $\cB-$positive. 
\end{proof}

\begin{corollary}[c.f. {\cite[XI 11.11]{FlDr88}}]\label{cor: weak form of characterization of positivity}
	Let $\cB=\{B_t\}_{t\in G}$ be a Banach *-algebraic bundle and $H$ a closed subgroup of $G.$
	For any representation $S\colon \cB_H\to \bB(X)$ the following  conditions are equivalent:
	\begin{enumerate}
		\item $S$ is $\cB-$positive.
		\item $\langle S_{b^*b}\xi,\xi\rangle \geq 0$ for all $b\in \cB$ and $\xi\in X.$
	\end{enumerate}
\end{corollary}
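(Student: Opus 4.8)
The implication $(1)\Rightarrow(2)$ is just the case $n=1$: given $b\in B_t$, apply \eqref{equ:positivity} to the coset $tH$, to the single element $b\in\cB_{tH}$ and to a single vector $\xi$. So the whole content is $(2)\Rightarrow(1)$, and the plan is to reduce it to the Fell bundle situation already settled in Theorem \ref{thm:positivity}, using $(2)$ to supply the Fell-bundle positivity axiom that $\cB$ lacks.

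To this end I would manufacture a Fell bundle out of $\cB$ and $S$. For $t\in G$ set $p_t(b):=\|S_{b^*b}\|^{1/2}$; this is well defined precisely because $(2)$ makes $S_{b^*b}$ a positive operator, and a Cauchy--Schwarz argument for the $\bB(X)$-valued form $(b,c)\mapsto S_{b^*c}$ on $B_t$ shows that $\{b:p_t(b)=0\}$ is a subspace, so $p_t$ descends to a norm on a quotient of $B_t$. Let $C_t$ be the completion, $[b]\in C_t$ the class of $b$, and $\cC=\{C_t\}$. For $t\in H$ one has $p_t(b)=\|S_b\|$, whence $\cC_H$ is the image Fell bundle $\{\overline{S(B_t)}\}_{t\in H}\subset\bB(X)$ and $S$ descends to a representation $\dot S$ of $\cC_H$ with $\dot S_{[b]}=S_b$. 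The identity $p_e(b^*b)=\|S_{(b^*b)^2}\|^{1/2}=\|S_{b^*b}\|=p_t(b)^2$ gives the C*-norm axiom, while $(2)$ says exactly that $[b]^*[b]=[b^*b]$ is positive in $C_e\cong\overline{S(B_e)}$.

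With $\cC$ at hand I would rerun the proof of Theorem \ref{thm:positivity} for $\cC$ and $\dot S$. Fixing $rH$ and $b_1,\dots,b_n\in\cB_{rH}$ with $b_j\in B_{rs_j}$, the matrix $\bar M:=([b_i^*b_j])_{i,j}$ lies in the C*-algebra $\mathbb{M}_{\mathfrak s}(\cC_H)$, and the trace-module argument gives $\bar M\geq 0$ there, the only inputs being that $\mathbb{M}_{\mathfrak s}(\cC_H)$ is a C*-algebra and that each $(\sum_j[b_j]\bar N_{j,i})^*(\sum_k[b_k]\bar N_{k,i})=[w_i]^*[w_i]$ is positive in $C_e$, which is $(2)$. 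Writing $\bar M=\bar N^*\bar N$ with $\bar N$ having entries in $\cC_H$ and applying $\dot S$ yields
\begin{equation*}
\sum_{i,j=1}^n\langle S_{b_j^*b_i}\xi_i,\xi_j\rangle
=\sum_{k}\Big\langle\sum_i \dot S_{\bar N_{k,i}}\xi_i,\sum_j \dot S_{\bar N_{k,j}}\xi_j\Big\rangle
=\sum_k\Big\|\sum_i \dot S_{\bar N_{k,i}}\xi_i\Big\|^2\geq 0,
\end{equation*}
which is \eqref{equ:positivity}. Equivalently, one observes that $\dot S$ is $\cC$-positive by Theorem \ref{thm:positivity} and pulls this back along the bundle morphism $b\mapsto[b]$, since $\dot S_{[b_j]^*[b_i]}=S_{b_j^*b_i}$.

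The step that needs care --- and the reason Theorem \ref{thm:positivity} must be available \emph{without} saturation --- is the verification that $\cC$ is a genuine Fell bundle: $\cC$ is typically non-saturated, and a general product $C_t\cdot C_{t'}$ would require controlling $S_{c^*(b^*b)c}$ for $c\notin\cB_H$, which $(2)$ does not obviously govern. The point is that the products actually occurring in the argument above are all of the form $B_{rs}\cdot B_{s^{-1}s'}$ or $B_{rs}^{\,*}\cdot B_{rs'}$, each carrying a factor in $\cB_H$; for these, submultiplicativity of $p$ and compatibility of the null spaces follow immediately from $S$ being a $*$-representation on $\cB_H$, e.g. $p_{rs'}(bh)^2=\|S_h^*S_{b^*b}S_h\|\leq p_{rs}(b)^2\,p_{s^{-1}s'}(h)^2$ for $h\in B_{s^{-1}s'}$. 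Thus the problematic multiplications never intervene, and the proof transcribes verbatim from that of Theorem \ref{thm:positivity}.
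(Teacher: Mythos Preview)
The paper gives no proof of this corollary; it is stated as an immediate consequence of Theorem~\ref{thm:positivity} via a remark of Fell on \cite[p.~1159]{FlDr88}. Your strategy---manufacturing an $S$-dependent Fell-type structure on which to rerun the proof of Theorem~\ref{thm:positivity}---is the natural way to read that implication, and the core of your argument is correct.

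There is, however, a small gap in your last paragraph. You list two kinds of products as ``occurring in the argument'': $B_{rs}\cdot B_{s^{-1}s'}$ and $B_{rs}^{\,*}\cdot B_{rs'}$, claiming each ``carries a factor in $\cB_H$''. For the second kind neither factor lies in $\cB_H$ (only the \emph{result} $b^*c\in B_{s^{-1}s'}$ does), and you check submultiplicativity only for the first kind. For the second one would need to bound $\|S_{(b^*c)^*(b^*c)}\|=\|S_{c^*bb^*c}\|$ by $\|S_{b^*b}\|\,\|S_{c^*c}\|$, and since $c\notin\cB_H$ you cannot split $S_{c^*bb^*c}$ through $S_c$; condition~(2) does not obviously give this.

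The good news is that these products are not needed at all. You never have to leave $\cC_H$: the matrix $\bar M=(S_{b_i^*b_j})$ already lives in the genuine C*-algebra $\mathbb{M}_{\mathfrak s}(\cC_H)$, with $\cC_H=\{\overline{S(B_t)}\}_{t\in H}$ a concrete Fell bundle inside $\bB(X)$, and the Hilbert-module quantity $\langle\phi_{\bar M}N,N\rangle_{C_e}=\trace(N^*\bar M N)$ is computed entirely within $\cC_H$. Approximating the entries of $N$ by $S_{n_{ji}}$ with $n_{ji}\in B_{s_j^{-1}s_i}$ and using only that $S$ is multiplicative on $\cB_H$, one finds
\[
\trace(N^*\bar M N)=\sum_i\sum_{j,k} S_{n_{ji}^*(b_j^*b_k)n_{ki}}=\sum_i S_{w_i^*w_i},\qquad w_i:=\sum_j b_j n_{ji}\in B_{rs_i},
\]
which is $\ge 0$ in $C_e$ by~(2). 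The products $b_jn_{ji}$ and $w_i^*w_i$ are taken in $\cB$, where they are given, and only the final value $S_{w_i^*w_i}$ is read in $C_e$. So no multiplication in $\cC$ outside $\cC_H$ is ever required, and once you organize the computation this way your proof goes through cleanly.
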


\subsection{Fell's abstract induction process for Fell fundles}

Take a Fell bundle $\cB=\{B_t\}_{t\in G}.$
We denote $C_c(\cB),$ and not $\mathcal{L}(\cB)$ as in \cite{FlDr88}, the set of continuous cross-sections of $\cB$ with compact support.
This set is in fact a dense *-subalgebra of the $L^1-$cross-sectional algebra $L^1(\cB)$ of \cite[VIII 5.6]{FlDr88}.
The cross-sectional C*-algebra of $\cB,$ $C^*(\cB),$ is the enveloping C*-algebra of $L^1(\cB)$ and we know from \cite[VIII 16.4]{FlDr88}  that $L^1(\cB)$ is reduced, meaning that we may think of $L^1(\cB)$ as a dense *-subalgebra of $C^*(\cB).$
The integrated form of a representation $T\colon \cB\to \bB(X)$ is the unique representation $\intform{T}\colon C^*(\cB)\to \bB(X)$ such that $\intform{T}_f \xi = \int_G T_{f(t)}\xi\, dt$ for all $f\in C_c(\cB)$ and $\xi\in X.$
All representations of $C^*(\cB)$ arise this way\footnote{So they can be ``disintegrated''.} and $\intform{T}$ determines  $T.$

The definition of weak containment (and equivalence) of representations we adopt are those of \cite[VII 1]{FlDr88} and \cite[VIII 21]{FlDr88}.
Given two sets of representations, $\mathcal{S}$ and  $\mathcal{T},$ the expressions $\mathcal{S}\preceq \mathcal{T}$ and $\mathcal{S}\approx \mathcal{T}$ mean that $\mathcal{S}$ is weakly contained in $\mathcal{T}$ and that $\mathcal{S}$ is weakly equivalent to $\mathcal{T},$ respectively.

The basic ingredients we need to perform Fell's abstract induction process are a closed subgroup $H$ of $G;$ a representation $T\colon \cB_H\to \bB(X);$ the ``normalized restriction''
\begin{align}
p \colon & C_c(\cB)\to C_c(\cB_H)  &  p(f)(t)&=\left(\frac{\Delta_G(t)}{\Delta_H(t)}\right)^{1/2}f(t) \label{equ: p}
\end{align}
and the action $C_c(\cB)\times C_c(\cB_H) \to C_c(\cB),$  $(f,u)\mapsto fu,$ given by
\begin{equation}
\qquad  fu(t)= \int_H f(ts)u(s^{-1})\left(\frac{\Delta_G(s)}{\Delta_H(s)}\right)^{1/2}\, ds.\label{equ: action to induce}
\end{equation}

We abbreviate $\frac{\Delta_G(t)}{\Delta_H(t)}$ to $\Delta_H^G(t)$ and write $p_H$ instead of $p$ only when not doing so may cause any confusion.

Any representation $T\colon \cB_H\to \bB(X)$ is $\cB-$positive and this implies there is a unique pre-inner product $[\ ,\ ]_T$ on the algebraic tensor product $C_c(\cB)\odot X$ such that, for all $f,g\in C_c(\cB)$ and $\xi,\eta\in X,$ $[ f\odot \xi,g\odot \eta]_T = \langle \intform{T}_{p(g^**f)}\xi,\eta\rangle.$ 
We denote $X^p_T$ the Hilbert space obtained as the completion of the quotient
\begin{equation*}
C_c(\cB)\odot X /\{u\in C_c(\cB)\odot X \colon [ u,u]_T=0\}
\end{equation*}
with respect to the natural (quotient) inner product, which we denote $\langle \ , \ \rangle.$

The image of an elementary tensor $f\odot \xi\in C_c(\cB)\odot X$ (via the quotient map and the inclusion into the completion) will be denoted $f\otimes_T \xi.$
By construction, 
\begin{equation}\label{equ:inner product on induced space}
\langle f\otimes_T\xi,g\otimes_T\eta\rangle = \langle \intform{T}_{p(g^**f)}\xi,\eta\rangle =  \langle \xi,\intform{T}_{p(f^**g)} \eta\rangle.
\end{equation}

By \cite[XI 9.26]{FlDr88} the (abstractly) induced representation  $\Ind^\cB_H(T)\colon \cB\to \bB(X^p_T),$ and its integrated form $\intform{\Ind}_H^\cB(T) \colon C^*(\cB)\to \bB(X^p_T) $
can be characterized by saying that 
\begin{align*}
\Ind_H^\cB(T)_b(f\otimes_T\xi) &=(bf)\otimes_T\xi  & \intform{\Ind}_H^\cB(T)_g (f\otimes_T\xi)=(g*f)\otimes_T\xi
\end{align*}
for all $b\in \cB,$ $f,g\in C_c(\cB)$ and $\xi\in X,$ where the action of $\cB$ on $C_c(\cB)$ used in the first identity is 
\begin{align*}
\cB\times C_c(\cB)&\to C_c(\cB)  &  (b\in B_s,f)&\mapsto bf  & (bf)(t)&=bf(\smu t).
\end{align*}

\begin{remark}\label{rem:trivial bundle}
	The trivial one dimensional complex Banach bundle over $G,$ $\cT_G=\{\bC\delta_t\}_{t\in G},$ is a saturated Fell bundle with the operations $(z\delta_r)(w\delta_s)=zw\delta_{rs}$ and $(z\delta_r)^* =\overline{z}\delta_r.$
	We can easily identify $C_c(\cT_G),$ $L^1(\cT_G)$ and $C^*(\cT_G)$ with $C_c(G),L^1(G)$ and $C^*(G),$ respectively.
	The unitary representations of $G$ are in one to one correspondence with the non degenerate representations of $\cT_G.$
	Up to this identification, the induction process $U\mapsto \Ind_H^G(U)$ is the same as $U\mapsto \Ind_{H}^{\cT_G}(U).$
\end{remark}

\subsubsection{The induction module}
With $\langle\ ,\ \rangle_{H}^\cB\colon  C_c(\cB)\times C_c(\cB)\to C_c(\cB_H)$ defined by $\langle f,g\rangle_H^\cB:=p(f^**g),$ $C_c(\cB)$ becomes a right $C_c(\cB_H)-$rigged left $C_c(\cB)-$module (in the sense of \cite{FlDr88}) with the action \eqref{equ: action to induce} on the right and the natural action by convolution on the left.
If we consider $C_c(\cB_H)$ as a dense *-subalgebra of $C^*(\cB_H),$ then $\langle\ ,\ \rangle_{H}^\cB$ is positive in the sense that $\langle f ,f \rangle_{H}^\cB\geq 0$ for all $f\in C_c(\cB).$
Indeed, take a non degenerate representation $T\colon \cB_H\to \bB(X)$ with faithful integrated form.
Then $\intform{T}_{\langle f ,f \rangle_{H}^\cB}\equiv \intform{T}_{p(f^**f)}\geq 0$ because $T$ is $\cB-$positive \cite[XI 8.6--8.9]{FlDr88}.

We are now in the situation of \cite[Lemma 2.16]{Raeburn1998morita}, so there exists a right $C^*(\cB_H)-$Hilbert module $L^2_H(\cB)$ (with inner product $\langle\ ,\ \rangle_{C^*(\cB_H)}$) and a linear map $q\colon C_c(\cB)\to L^2_H(\cB)$ with dense image and $\langle q(f),q(g)\rangle_{C^*(\cB_H)}=p(f^**g)$ for all $f,g\in C_c(\cB).$
To prove $q$ is faithful we start by noticing $q(f)=0$ implies $p(f^**f)(e)=0,$ which translates to $\int_G f(t)^*f(t)\, dt=0.$
This last condition implies $f=0$ because $t\mapsto f(t)^*f(t)$ is a continuous function with compact support from $G$ to the positive cone of $B_e.$

Now that we know $q$ is injective, we delete any reference to it and think of $C_c(\cB)$ as a dense subspace of $L^2_H(\cB).$
For example, we make no distinction between $\langle f,g\rangle_{C^*(\cB_H)},$ $\langle f,g\rangle_H^\cB$ and $p(f^**g).$

The following remark will be used repeatedly (and even without mention) in the rest of the article.

\begin{remark}\label{rem:construction of isometry}
	Given a non empty set $A,$ Hilbert spaces $X$ and $Y$ and functions $x\colon A\to X$ and $y\colon A\to Y$ such that  $\langle x(a),x(b)\rangle = \langle y(a),y(b)\rangle$ for all $a,b\in A,$ there exists a unique linear isometry $I\colon \cspn\{x(a)\colon a\in A\}\to \cspn\{y(a)\colon a\in A\}$ such that $I\circ x=y.$
\end{remark}

The proposition we are about to state is a reinterpretation of \cite[XI 9.26]{FlDr88}.
It reveals that when working with Fell bundles, the induction process can be carried out using Hilbert modules and not just left rigged modules.

\begin{proposition}\label{prop:construction of general reg rep}
	There exists a unique *-homomorphism $\regrep{H\cB}\colon C^*(\cB)\to \bB(L^2_H(\cB)) $ such that $\regrep{H\cB}_f g=f*g$ for all $f,g\in C_c(\cB).$
	Moreover, for any representation $T\colon \cB_H\to \bB(X)$ it follows that 
	\begin{equation*}
	\regrep{H\cB}\otimes_{\intform{T}}1\cong \intform{\Ind}_H^\cB(T).
	\end{equation*}
\end{proposition}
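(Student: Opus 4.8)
The plan is to first realise left convolution by elements of $C_c(\cB)$ as adjointable operators on the Hilbert module $L^2_H(\cB)$, to check that $f\mapsto\regrep{H\cB}_f$ is a $*$-homomorphism bounded by the $C^*(\cB)$-norm and extend it by density, and only then to identify the interior tensor product with Fell's induced space $X^p_T$. First I would record the formal adjoint relation: for $f,g,h\in C_c(\cB)$, associativity of convolution together with $(f*g)^*=g^**f^*$ gives
\[
\langle f*g,h\rangle_{C^*(\cB_H)}=p\bigl((f*g)^**h\bigr)=p\bigl(g^**(f^**h)\bigr)=\langle g,f^**h\rangle_{C^*(\cB_H)},
\]
so convolution by $f$ admits convolution by $f^*$ as a formal adjoint on the dense submodule $C_c(\cB)\subset L^2_H(\cB)$.

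The one genuinely non-formal point is that this formally adjointable map is \emph{bounded}, and here I would invoke Theorem \ref{thm:positivity}. Fix a representation $T\colon\cB_H\to\bB(X)$ with faithful integrated form (disintegrate a faithful non-degenerate representation of $C^*(\cB_H)$); by Theorem \ref{thm:positivity} it is $\cB$-positive, so $\intform{\Ind}_H^\cB(T)$ exists and is a $*$-homomorphism of $C^*(\cB)$, hence contractive. Using $\intform{\Ind}_H^\cB(T)_f(g\otimes_T\xi)=(f*g)\otimes_T\xi$ and \eqref{equ:inner product on induced space}, for every $\xi\in X$,
\begin{align*}
\langle\intform{T}_{p((f*g)^**(f*g))}\xi,\xi\rangle=\|(f*g)\otimes_T\xi\|^2
&\le \|f\|^2_{C^*(\cB)}\,\|g\otimes_T\xi\|^2\\
&=\|f\|^2_{C^*(\cB)}\,\langle\intform{T}_{p(g^**g)}\xi,\xi\rangle.
\end{align*}
Since $\intform{T}$ is faithful it reflects positivity, so $\langle f*g,f*g\rangle_{C^*(\cB_H)}\le\|f\|^2_{C^*(\cB)}\langle g,g\rangle_{C^*(\cB_H)}$ in $C^*(\cB_H)$; thus $\regrep{H\cB}_f$ extends to an adjointable operator of norm at most $\|f\|_{C^*(\cB)}$, with adjoint $\regrep{H\cB}_{f^*}$. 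Multiplicativity and $*$-preservation of $f\mapsto\regrep{H\cB}_f$ are immediate from associativity and the first step, and the norm bound lets it extend uniquely to a $*$-homomorphism $\regrep{H\cB}\colon C^*(\cB)\to\bB(L^2_H(\cB))$, uniqueness following from density of $C_c(\cB)$.

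For the second assertion and an \emph{arbitrary} $T\colon\cB_H\to\bB(X)$ I would compare inner products directly. On the interior tensor product $L^2_H(\cB)\otimes_{\intform{T}}X$ one has $\langle f\otimes\xi,g\otimes\eta\rangle=\langle\xi,\intform{T}_{p(f^**g)}\eta\rangle$, which by \eqref{equ:inner product on induced space} equals $\langle f\otimes_T\xi,g\otimes_T\eta\rangle$ in $X^p_T$. Applying Remark \ref{rem:construction of isometry} to the families $(f,\xi)\mapsto f\otimes\xi$ and $(f,\xi)\mapsto f\otimes_T\xi$, whose ranges are total in the respective spaces, produces a unitary $U\colon L^2_H(\cB)\otimes_{\intform{T}}X\to X^p_T$ with $U(f\otimes\xi)=f\otimes_T\xi$. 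Finally, for $f\in C_c(\cB)$,
\[
U\bigl((\regrep{H\cB}_f\otimes 1)(g\otimes\xi)\bigr)=(f*g)\otimes_T\xi=\intform{\Ind}_H^\cB(T)_f\,U(g\otimes\xi),
\]
and both sides are continuous in $f$ with $C_c(\cB)$ dense in $C^*(\cB)$, so $U$ intertwines $\regrep{H\cB}\otimes_{\intform{T}}1$ with $\intform{\Ind}_H^\cB(T)$.

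I expect the main obstacle to be exactly the boundedness in the second paragraph: passing from a densely defined convolution operator with a formal adjoint to a bona fide bounded adjointable operator. Everything else is bookkeeping with convolution, $p$, and matching inner products, whereas the estimate forces one to manufacture an honest contractive representation of $C^*(\cB)$, and this is only possible because \emph{every} representation of $\cB_H$ is inducible, i.e. Theorem \ref{thm:positivity}. This is the conceptual reason the existence of $\regrep{H\cB}$ is a theorem about Fell bundles rather than a formality.
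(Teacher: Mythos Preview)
Your proof is correct and follows essentially the same route as the paper's: establish the adjoint identity $\langle f*g,h\rangle=\langle g,f^**h\rangle$, then obtain the operator inequality $\langle f*g,f*g\rangle\le C(f)\langle g,g\rangle$ by testing against a faithful $\intform{T}$ and using the induced representation, extend to $C^*(\cB)$, and finally build the unitary $L^2_H(\cB)\otimes_{\intform{T}}X\to X^p_T$ by matching inner products on elementary tensors.

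The one cosmetic difference is the constant $C(f)$: the paper bounds by $\|f\|_1^2$ (using only that $\intform{\Ind}_H^\cB(T)$ restricts to a $*$-representation of the Banach $*$-algebra $C_c(\cB)$) and then passes through $L^1(\cB)$ before reaching $C^*(\cB)$, whereas you bound directly by $\|f\|_{C^*(\cB)}^2$ (using that $\intform{\Ind}_H^\cB(T)$ is already a representation of $C^*(\cB)$) and skip the $L^1$ step. Your shortcut is legitimate and slightly cleaner; it buys nothing mathematically new, but it does make the dependence on Theorem~\ref{thm:positivity} more visible, exactly as you emphasise in your final paragraph.
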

\begin{proof}
	Take any representation $T\colon \cB_H\to \bB(X).$
	By \cite[XI 9.26]{FlDr88} the representation $\intform{T}|_{C_c(\cB)}$ is inducible to $C_c(\cB)$ via the conditional expectation $p$ of \eqref{equ: p} and the resulting induced representation is $\intform{\Ind}_H^\cB(T)|_{C_c(\cB)}.$
	Then, for all $f,g\in C_c(\cB)$ and $\xi\in X$ we have
	\begin{equation*}
	\langle \intform{T}_{\langle f*g,f*g\rangle_H^\cB}\xi,\xi\rangle 
	=\|\Ind_H^\cB(T)_f(g\otimes_T\xi)\|^2
	\leq \|f\|_1^2\|g\otimes_T\xi\|^2=\|f\|_1^2\langle \intform{T}_{\langle g,g\rangle_H^T}\xi,\xi\rangle.
	\end{equation*}
	Since we can choose $T$ so that $\intform{T}$ is faithful, we have $\langle f*g,f*g\rangle_H^\cB\leq \|f\|_1^2\langle g,g\rangle_H^\cB$ and
	\begin{equation*}
	\langle f*g,h\rangle_H^\cB = p((f*g)^**h)=p(g^** (f^**g))=\langle g,f^**h\rangle_H^\cB
	\end{equation*}
	for all $f,g,h\in C_c(\cB).$
	This implies the existence of a map $\regrep{0}\colon C_c(\cB)\to \bB(L^2_H(\cB))$ such that $\regrep{0}_fg=f*g,$ $\|\regrep{0}_f\|\leq\|f\|_1$ and $(\regrep{0}_f)^*=\regrep{0}_{f^*}.$
	
	Note that $\regrep{0}$ is a homomorphism of *-algebras which is contractive with respect to $\|\ \|_1,$ so it admits a unique extension to a *-homomorphism $\regrep{1}\colon L^1(\cB)\to \bB(L^2_H(\cB));$ which we can extend in a unique way to a *-homomorphism $\regrep{H\cB}\colon C^*(\cB)\to \bB(L^2_H(\cB)).$
	
	The tensor product $L^2_H(\cB)\otimes_{\intform{T}}X$ is the closed linear span of elementary tensors $f\otimes_{\intform{T}}\xi$ ($f\in C_c(\cB)$ and $\xi\in X$) with $\langle f\otimes_{\intform{T}}\xi,g\otimes_{\intform{T}}\eta\rangle =\langle \intform{T}_{\langle g,f\rangle_H^\cB}\xi,\eta\rangle = \langle f\otimes_T\xi,g\otimes_T\eta\rangle.$
	So there exists a unique unitary $U\colon L^2_H(\cB)\otimes_{\intform{T}}X\to X^p_T$ sending $f\otimes_{\intform{T}}\xi$ to $f\otimes_T\xi.$
	This operator intertwines $\regrep{H\cB}\otimes_{\intform{T}}1$ and $\intform{\Ind}_H^\cB(T)$ because for all $f,g\in C_c(\cB)$ and $\xi\in X$ we have
	\begin{equation*}
	U^* \intform{\Ind}_H^\cB(T)_fU (g\otimes_{\intform{T}}\eta)=U^*((f*g)\otimes_T\eta)=(f*g)\otimes_{\intform{T}}\eta= [\regrep{H\cB}\otimes_{\intform{T}}1]_f(g\otimes_{\intform{T}}\eta);
	\end{equation*}
	which implies $U^* \intform{\Ind}_H^\cB(T)_fU=[\regrep{H\cB}\otimes_{\intform{T}}1]_f$ for all $f\in C^*(\cB).$
\end{proof}

\begin{remark}
	One may use \cite[VIII 12.7]{FlDr88} to disintegrate $\regrep{H\cB}$ into a Fr\'{e}chet representation ${\regrep{H\cB}}'$ which happens to be given by ${\regrep{H\cB}}'_b f = bf,$ for all $b\in \cB$ and $f\in C_c(\cB)\subset L^2_H(\cB).$
	This ultimately follows from the fact that for all $b\in \cB$ and $f,g\in C_c(\cB),$   ${\regrep{H\cB}}'_b (f*g) ={\regrep{H\cB}}'_b \regrep{H\cB}_f g = \regrep{H\cB}_{bf}g  = (bf)*g = b(f*g).$
\end{remark}

\begin{remark}[Systems of Imprimitivity]\label{rem:systems of imprimitivity}
	As pointed out in \cite[XI 14.4]{FlDr88}, there is a natural action $C_0(G/H)\times C_c(\cB)\to C_c(\cB),$ $(f,g)\mapsto fg$ where $fg (r) = f(rH)g(r).$
	Moreover, Fell shows that given a representation $T\colon \cB_H\to \bB(X),$ the action of $C_0(G/H)$ induces a representation
	\begin{align*}
	\psi^T & \colon C_0(G/H)\to \bB(L^2_H(\cB)\otimes_{\intform{T}}X)  & \psi^T_f(g\otimes_T \xi)&=fg\otimes_T\xi.
	\end{align*}
	In particular, $\langle \xi,\intform{T}_{\langle fg,fg\rangle}\xi\rangle = \| \psi^T_f(g\otimes_T\xi) \|^2\leq \|f\|^2\|g\otimes_T\xi\|^2= \|f\|^2\langle \xi,\langle g,g\rangle\xi\rangle$ and this yields  $\langle fg,fg\rangle\leq \|f\|^2\langle g,g\rangle.$
	This is the key fact one needs to prove the existence of a unique *-homomorphism $\psi\colon C_0(G/H)\to \bB(L^2_H(\cB))$ such that $\psi_fg = fg$ for all $(f,g)\in C_0(G/H)\times C_c(\cB).$
	It turns out that $\psi$ is non degenerate and $\psi^T_f = \psi_f\otimes_{\intform{T}}1.$
	The pair $(\regrep{H\cB},\psi)$ is a kind of universal system of imprimitivity because, according to \cite[XI 14.3--14.4]{FlDr88}, by disintegrating  $\regrep{H\cB}\otimes_{\intform{T}}1$ and $\psi\otimes_{\intform{T}}1$ one gets $\Ind_H^\cB(T)$ and the projection-valued measure induced by $T.$
	It then should come as no surprise that, denoting $\tau$ the natural action of $G$ on $C_0(G/H),$ one obtains ${\regrep{H\cB}}'_b \psi_f = \psi_{\tau_t(f)} {\regrep{H\cB}}'_b$ for all $b\in B_t$ and $f\in C_0(G/H).$
\end{remark}

\begin{remark}\label{rem:H=e}
	If $H=\{e\},$ then $L^2_H(\cB)=L^2_e(\cB)$ and $\regrep{H\cB}=\lambda_\cB.$
	Indeed, in this situation $B_e\equiv C_c(\cB_H)$ and the inner product of $f,g\in C_c(\cB)$ in $L^2_e(\cB)$ is $\int_Gf(t)^*g(t)\, dt=f^**g(e)\equiv p(f^**g)=\langle f,g\rangle_H^\cB.$
	The action of \eqref{equ: action to induce} reduces to $(fb)(t) =f(t)b$ and this is the action used by Exel and Ng to construct $L^2_e(\cB).$
	Hence, $L^2_H(\cB)=L^2_e(\cB).$
	The rest is an immediate consequence of the las proposition above and \cite[Proposition 2.6]{ExNg}.
\end{remark}

\begin{remark}\label{rem:H=G}
	If $H=G$ then $L^2_H(\cB)$ is the C*-algebra $C^*(\cB)$ considered as a right $C^*(\cB)-$Hilbert module and $\regrep{G\cB}\colon C^*(\cB)\to \bB(C^*(G))$ is the natural inclusion.
	This is the case because $\langle f,g\rangle_G^\cB = p(f^**g)=f^**g$ and the action \eqref{equ: action to induce} is given by the convolution product.
\end{remark}

\section{Universal properties of cross-sectional C*-algebras}\label{sec:universal}

Let $\cB$ be a Fell bundle and $\cF$ a non empty family of representations of $\cB.$
We say a C*-algebra $A$ is universal for $\cF$ if there exists a surjective *-homomorphism $\pi\colon C^*(\cB)\to A$ such that (i) for every $(S\colon \cB\to \bB(X))\in \cF,$ there exists a representation $\rho^S\colon A\to \bB(X)$ with $\rho^S\circ \pi=\intform{S}$ and (ii) for some $S\in \cF,$  $\rho^S$ is faithful. 
Notice $\rho^S$ is unique because $\pi$ is surjective.

Say we have another surjective *-homomorphism $\pi'\colon C^*(\cB)\to A'$ satisfying (i) and (ii).
Let $S\colon \cB\to \bB(X)$ and $T\colon \cB\to \bB(Y)$ be members of $\cF$ such that the respective representations $\rho^S\colon A\to \bB(X)$ and ${\rho'}^T\colon A'\to \bB(Y)$ are faithful.
By uniqueness,  both $\rho^{S\oplus T} = \rho^S\oplus \rho^T$ and ${\rho'}^{S\oplus T}$ are faithful and we get that for all $f\in C^*(\cB),$ $\|\pi(f)\| = \|[T\intform{\oplus }S]_f\|=\|\pi'(f)\|.$
This clearly implies the existence of a unique isomorphism $\mu\colon A\to A'$ such that $\mu\circ \pi = \pi'.$

The next propositions imply $C^*_{H\uparrow\cB}(\cB)$ is universal for the representations of $\cB$ induced from representations of $\cB_H;$ while $C^*_{H\uparrow\cB}(\cB)$ is for those of the form $T\otimes \Ind_H^G(U),$ $T$ being a representation of $\cB$ and $U$ one of $H.$

\begin{proposition}\label{prop: CHBB universal property}
	Let $\cB=\{B_t\}_{t\in G}$ be a Fell bundle, $H\leqslant G$ and $T\colon \cB_H\to \bB(X)$ a representation.
	Then there (exists) a unique representation $\rho\colon C^*_{H\uparrow \cB}(\cB)\to \bB(X^p_T)$ such that $\rho\circ q^{H\uparrow\cB}=\intform{\Ind}_H^\cB(T).$
	Moreover, if $\intform{T}$ is faithful then so it is $\rho.$
\end{proposition}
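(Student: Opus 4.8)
The plan is to read off $\rho$ from the universal property of the quotient and then to compute its kernel. By Theorem \ref{thm:positivity} the representation $T$ is $\cB$-positive, so $\intform{\Ind}_H^\cB(T)$ is one of the integrated forms whose kernels are intersected to define $J:=\ker q^{H\uparrow\cB}$. Hence $J\subseteq\ker\intform{\Ind}_H^\cB(T)$, and $\intform{\Ind}_H^\cB(T)$ factors through $q^{H\uparrow\cB}$ as a (unique, since $q^{H\uparrow\cB}$ is surjective) *-homomorphism $\rho\colon C^*_{H\uparrow\cB}(\cB)\to\bB(X^p_T)$ with $\rho\circ q^{H\uparrow\cB}=\intform{\Ind}_H^\cB(T)$. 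This disposes of existence and uniqueness, and reduces the ``moreover'' to the claim that $J=\ker\intform{\Ind}_H^\cB(T)$ whenever $\intform{T}$ is faithful, for then $\ker\rho=\ker\intform{\Ind}_H^\cB(T)/J=0$.

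To analyse $\ker\intform{\Ind}_H^\cB(T)$ I would invoke Proposition \ref{prop:construction of general reg rep}, which identifies $\intform{\Ind}_H^\cB(T)$ with $\regrep{H\cB}\otimes_{\intform{T}}1$, that is, with the composite of $\regrep{H\cB}\colon C^*(\cB)\to\bB(L^2_H(\cB))$ and the amplification *-homomorphism $\bB(L^2_H(\cB))\to\bB(L^2_H(\cB)\otimes_{\intform{T}}X)$, $S\mapsto S\otimes_{\intform{T}}1$. The key step, which I expect to be the main (though soft) obstacle, is to show this amplification map is injective when $\intform{T}$ is faithful. This is a Hilbert-module computation: if $S\otimes_{\intform{T}}1=0$ then for every $\eta\in L^2_H(\cB)$ and $\xi\in X$ one has
\begin{equation*}
0=\|(S\otimes_{\intform{T}}1)(\eta\otimes_{\intform{T}}\xi)\|^2=\langle\intform{T}_{\langle S\eta,S\eta\rangle_{C^*(\cB_H)}}\xi,\xi\rangle,
\end{equation*}
so the positive element $\intform{T}_{\langle S\eta,S\eta\rangle_{C^*(\cB_H)}}$ vanishes; faithfulness of $\intform{T}$ forces $\langle S\eta,S\eta\rangle_{C^*(\cB_H)}=0$, hence $S\eta=0$ for all $\eta$ and $S=0$. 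Consequently $\ker\intform{\Ind}_H^\cB(T)=\ker\regrep{H\cB}$.

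Finally I would identify $\ker\regrep{H\cB}$ with $J$. One inclusion is automatic: for every representation $T'$ of $\cB_H$ the amplification $S\mapsto S\otimes_{\intform{T'}}1$ is a *-homomorphism, so $\ker\regrep{H\cB}\subseteq\ker(\regrep{H\cB}\otimes_{\intform{T'}}1)=\ker\intform{\Ind}_H^\cB(T')$; intersecting over all $T'$ gives $\ker\regrep{H\cB}\subseteq J$. The reverse inclusion uses the faithful $T$ already at hand, since $J\subseteq\ker\intform{\Ind}_H^\cB(T)=\ker\regrep{H\cB}$ by the previous paragraph. Thus $J=\ker\regrep{H\cB}=\ker\intform{\Ind}_H^\cB(T)$, whence $\ker\rho=0$ and $\rho$ is faithful, as required.
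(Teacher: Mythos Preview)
Your proof is correct. The existence and uniqueness paragraph is the same as the paper's. For the faithfulness claim you take a genuinely different route: the paper argues via weak containment, observing that if $\intform{T}$ is faithful then any other (non-degenerate) representation $S$ of $\cB_H$ satisfies $S\preceq T$, and then invokes the continuity of induction in the regional topology \cite[XI 12.4]{FlDr88} to get $\intform{\Ind}_H^\cB(S)\preceq\intform{\Ind}_H^\cB(T)$, hence $\ker\intform{\Ind}_H^\cB(T)\subseteq\ker\intform{\Ind}_H^\cB(S)$ for every $S$ and therefore $\ker\intform{\Ind}_H^\cB(T)=J$.

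You instead pass through the Hilbert-module realisation $\intform{\Ind}_H^\cB(T)\cong\regrep{H\cB}\otimes_{\intform{T}}1$ from Proposition~\ref{prop:construction of general reg rep} and use the standard fact that amplification by a faithful representation is injective. This is exactly the mechanism the paper uses later to prove the Corollary identifying $\regrep{H\cB}(C^*(\cB))$ as universal; you have effectively merged that corollary into the proof of the proposition. Your argument is more self-contained (no appeal to \cite{FlDr88} for regional continuity), at the cost of relying on Proposition~\ref{prop:construction of general reg rep}; the paper's argument is shorter and keeps the weak-containment and Hilbert-module viewpoints separate.
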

\begin{proof}
	Let $J\subset C^*(\cB)$ be the intersection of all the kernels of integrated forms of representations of $\cB$ induced from representations of $\cB_H.$
	We obviously have $J\subset \ker(\intform{\Ind}_H^\cB(T)),$ so there exists a unique function $\rho\colon C^*(\cB)/J\to \bB(X^p_T)$ such that $\rho\circ q^{H\uparrow \cB} = \intform{\Ind}_H^\cB(T)$ (implying $\rho$ is a representation).
	
	Assume $\intform{T}$ is faithful and let $S$ be any other representation of $\cB_H.$
	Both $S$ and its non degenerate part induce the same representation of $\cB,$ so we may assume both $S$ and $T$ are non degenerate.
	We have $\intform{S}\preceq \intform{T}$ because $\intform{T}$ is faithful.
	By the definition of weak containment for bundles, $S\preceq T$ and the continuity of the induction process with respect the regional topology \cite[XI 12.4]{FlDr88} implies $\intform{\Ind}_H^\cB(S)\preceq \intform{\Ind}_H^\cB(T).$
	Hence, $\ker(\intform{\Ind}_H^\cB(T))\subset \ker (\intform{\Ind}_H^\cB(S))$ and it follows that $J=\ker(\intform{\Ind}_H^\cB(T))$ or, in other words, that $\rho$ is faithful.
\end{proof}

\begin{proposition}\label{prop: CHGB universal property}
	Let $\cB=\{B_t\}_{t\in G}$ be a Fell bundle and $H\leqslant G.$
	Given non degenerate representations $T\colon \cB\to \bB(X)$ and $U\colon H\to \bB(Y)$ there exists a (unique) representation $\rho\colon C^*_{H\uparrow G}(\cB)\to \bB(X\otimes Y)$ such that $\rho\circ q^{H\uparrow G}_\cB =T\intform{\otimes} \Ind_H^G(U).$
	Moreover, $\rho$ is faithful if $\intform{T}$ and $\intform{U}$ are.
\end{proposition}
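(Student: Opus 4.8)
The plan is to exploit the concrete description of $C^*_{H\uparrow G}(\cB)$ as the image $\delta^{\cB q}(C^*(\cB))$ inside $M(C^*(\cB)\otimes Q),$ where $q\colon C^*(G)\to Q:=C^*(G)/I_H$ is the quotient map defining the algebra and $q^{H\uparrow G}_\cB=\delta^{\cB q}.$ The key observation is that $T\intform{\otimes}\Ind_H^G(U)$ can be recovered by composing $\delta^{\cB q}$ with a representation of the tensor product $C^*(\cB)\otimes Q$ assembled from $\intform{T}$ and a factorization of $\Ind_H^G(U)$ through $Q;$ the desired $\rho$ is then simply the restriction of that tensor-product representation to the subalgebra $C^*_{H\uparrow G}(\cB).$

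First I would factor the induced representation. Since $\Ind_H^G(U)$ is induced from $H,$ one has $I_H\subset\ker(\intform{\Ind_H^G(U)}),$ so its integrated form factors as $\intform{\Ind_H^G(U)}=\sigma\circ q$ for a unique nondegenerate representation $\sigma\colon Q\to\bB(Y)$ with $\overline{\sigma}\circ\iota^q=\Ind_H^G(U).$ As $T$ and $U$ are nondegenerate, so are $\intform{T}$ and $\sigma,$ hence so is $\intform{T}\otimes\sigma\colon C^*(\cB)\otimes Q\to\bB(X\otimes Y),$ which extends to $\overline{\intform{T}\otimes\sigma}\colon M(C^*(\cB)\otimes Q)\to\bB(X\otimes Y).$ Setting $\rho:=\overline{\intform{T}\otimes\sigma}\big|_{C^*_{H\uparrow G}(\cB)}$ gives a representation of $C^*_{H\uparrow G}(\cB),$ and evaluating on $\delta^{\cB q}(f)=q^{H\uparrow G}_\cB(f)$ yields $\rho(q^{H\uparrow G}_\cB(f))=\overline{\intform{T}\otimes\sigma}(\delta^{\cB q}(f))=[T\intform{\otimes}\Ind_H^G(U)]_f.$ Uniqueness of $\rho$ is immediate from the surjectivity of $q^{H\uparrow G}_\cB.$

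For faithfulness, assume $\intform{T}$ and $\intform{U}$ are faithful. The key intermediate claim is that $\sigma$ is then faithful, i.e. $\ker(\intform{\Ind_H^G(U)})=I_H.$ Indeed, $\intform{U}$ faithful forces $W\preceq U$ for every representation $W$ of $H,$ and continuity of induction with respect to the regional topology (\cite[XI 12.4]{FlDr88}, as used for $\Ind_H^\cB$ in the proof of Proposition \ref{prop: CHBB universal property}) gives $\Ind_H^G(W)\preceq\Ind_H^G(U)$ for all $W;$ hence $\ker(\intform{\Ind_H^G(U)})\subset\bigcap_W\ker(\intform{\Ind_H^G(W)})=I_H,$ while the reverse inclusion holds because $\Ind_H^G(U)$ is itself induced from $H.$ With $\intform{T}$ and $\sigma$ both faithful, the minimal tensor product $\intform{T}\otimes\sigma$ is faithful, and a faithful nondegenerate representation of a C*-algebra extends to a faithful representation of its multiplier algebra; thus $\overline{\intform{T}\otimes\sigma}$ is faithful on all of $M(C^*(\cB)\otimes Q),$ and in particular on the C*-subalgebra $C^*_{H\uparrow G}(\cB)=\delta^{\cB q}(C^*(\cB)).$ Therefore $\rho$ is faithful.

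I expect the main obstacle to be the verification of the identity $\overline{\intform{T}\otimes\sigma}\circ\delta^{\cB q}=T\intform{\otimes}\Ind_H^G(U)$ for the given, possibly non-faithful, nondegenerate pair $(\intform{T},\sigma):$ the construction of $\delta^{\cB q}$ in the introduction was phrased using \emph{faithful} $(\pi,\rho),$ so I must check that the same recipe computes $\overline{\pi'\otimes\rho'}\circ\delta^{\cB q}=S'\intform{\otimes}U'$ for an arbitrary nondegenerate pair. This reduces to the strict continuity of $\overline{\intform{T}\otimes\sigma}$ together with the integral expression $\delta^{\cB q}(f)=\int_G f(t)\otimes\iota^q(t)\,dt$ on $C_c(\cB),$ after which $\overline{\intform{T}\otimes\sigma}(\delta^{\cB q}(f))=\int_G T_{f(t)}\otimes\Ind_H^G(U)_t\,dt$ is exactly $[T\intform{\otimes}\Ind_H^G(U)]_f.$ The only other point requiring care is the standard fact that a faithful nondegenerate representation stays faithful on the multiplier algebra, which is what transfers faithfulness from the minimal tensor product down to the subalgebra $C^*_{H\uparrow G}(\cB).$
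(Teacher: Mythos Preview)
Your proof is correct and follows essentially the same route as the paper's: factor $\intform{\Ind}_H^G(U)$ through a representation $\sigma$ of $Q=C^*(G)/I_H,$ define $\rho$ as the restriction of $\overline{\intform{T}\otimes\sigma}$ to $\delta^{\cB q}(C^*(\cB)),$ verify the composition via the integral formula for $\delta^{\cB q}$ on $C_c(\cB),$ and obtain faithfulness from the weak-containment argument $W\preceq U\Rightarrow\Ind_H^G(W)\preceq\Ind_H^G(U)$ together with faithfulness of minimal tensor products and their extensions to multipliers. The only difference is cosmetic (your $\sigma$ is the paper's $\pi$), and you even make explicit the point about extending the formula for $\overline{\pi'\otimes\rho'}\circ\delta^{\cB q}$ to non-faithful pairs, which the paper leaves implicit.
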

\begin{proof}
	By definition, we may think of $C^*_{H\uparrow G}(\cB)$ as the image of the *-homomorphism $\delta\colon C^*(\cB)\to M(C^*(\cB)\otimes Q)$ associated to the quotient map $q\colon C^*(G)\to Q:=C^*(G)/I;$ with $I\subset C^*(G)$ the intersection of all the kernels of integrated form of *-representations induced from $H.$
	Hence, $\intform{\Ind}_H^G(U)$ factors through a representation $\pi\colon Q\to \bB(Y)$ and we get a representation $\intform{T}\otimes\pi\colon C^*(\cB)\otimes Q\to \bB(X\otimes Y)$ we may extend to the multiplier algebra to get a representation $\overline{\intform{T}\otimes\pi}.$
	
	When we described $\delta\colon  C^*(\cB)\to M(C^*(\cB)\otimes Q)$ in the introduction we made use of canonical unitary representation $\iota\colon G\to M(Q)$ and the action of $\cB$ on $C_c(\cB)\subset C^*(\cB).$
	In fact, $\delta_f (g\otimes z) =\int_G f(t)g\otimes \iota(t)z\, dt$ for all $f,g\in C_c(\cB)$ and $z\in Q.$
	This yields $ \overline{\intform{T}\otimes\pi}\circ \delta = T\intform{\otimes} \Ind_H^G(U).$
	Since we are identifying $C^*_{H\uparrow G}(\cB)=\delta(C^*(\cB)),$ the quotient  $q^{H\uparrow G}_\cB\colon C^*(\cB)\to C^*_{H\uparrow G}(\cB)$ becomes $f\mapsto \delta_f$ and we may therefore define $\rho$ as the restriction of $\overline{\intform{T}\otimes\pi}$ to $C^*_{H\uparrow G}(\cB).$
	Say we have some other representation $\rho'\colon C^*_{H\uparrow G}(\cB)\to \bB(X\otimes Y)$ such that $\rho'\circ q^{H\uparrow G}_\cB=T\intform{\otimes} \Ind_H^G(U).$
	Then $\rho'\circ q^{H\uparrow G}_\cB  =\rho\circ q^{H\uparrow G}_\cB$ and this yields $\rho=\rho'$ because $q^{H\uparrow G}_\cB$ is surjective.
	
	Assume the integrated forms of $T$ and $U$ are faithful.
	Then $\pi$ is faithful because given any other representation $V$ of $H$ we have $\intform{V}\preceq \intform{U}\Rightarrow V\preceq U\Rightarrow \Ind_H^G(V)\preceq \Ind_H^G(U)\Rightarrow \intform{\Ind}_H^G(V)\preceq \intform{\Ind}_H^G(U)$ and this implies $I$ is the kernel of $\intform{\Ind}_H^G(U)$ or, in other words, $\pi$ is faithful and so must be $\overline{\intform{T}\otimes\pi};$ which clearly implies $\rho$ is faithful.
\end{proof}

\begin{remark}\label{rem: CHGB constant by conjutation}
	Say $\cB=\{B_t\}_{t\in G}$ is a Fell bundle and we have conjugated subgroups $H\leqslant G$ and $K=rH\rmu.$
	By \cite[XI 12.21]{FlDr88}, up to unitary equivalences, the classes of representations of $G$ induced from $H$ and $K$ agree.
	Then there exists an isomorphism $\chi\colon C^*_{H\uparrow G}(\cB)\to C^*_{K\uparrow G}(\cB)$ such that $\chi\circ q^{H\uparrow G}_\cB=q^{K\uparrow G}_\cB.$
\end{remark}

In the introduction we showed $C^*_{\{e\}\uparrow \cB}(\cB)=\regrepEN{\cB}(C^*(\cB))$ and we know from remark \ref{rem:H=e} that $\regrepEN{\cB}=\regrep{\{e\}\cB}.$
Thus $C^*_{\{e\}\uparrow \cB}(\cB)= \regrep{\{e\}\cB}(C^*(\cB)).$
This is a particular more general fact.

\begin{corollary}
	For every Fell bundle $\cB=\{B_t\}_{t\in G}$ and $H\leqslant G,$ the quotient map $C^*(\cB)\to \regrep{H\cB}(C^*(\cB)),$ $f\mapsto \regrep{H\cB}(f),$ makes of $\regrep{H\cB}(C^*(\cB))$ a universal C*-algebra for the representations of $\cB$ induced from $\cB_H.$
\end{corollary}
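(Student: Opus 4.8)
The plan is to identify $\ker\regrep{H\cB}$ with the ideal $J\subset C^*(\cB)$ appearing in Proposition \ref{prop: CHBB universal property}, namely the intersection of the kernels of all $\intform{\Ind}_H^\cB(S)$ as $S$ ranges over the representations of $\cB_H$. Once this is done, the quotient map $f\mapsto\regrep{H\cB}(f)$ descends to a canonical isomorphism $\theta\colon C^*_{H\uparrow\cB}(\cB)=C^*(\cB)/J\xrightarrow{\sim}\regrep{H\cB}(C^*(\cB))$ with $\theta\circ q^{H\uparrow\cB}=\regrep{H\cB}$, and the corollary follows by transporting the universal property of $C^*_{H\uparrow\cB}(\cB)$ established in Proposition \ref{prop: CHBB universal property} along $\theta$, using the uniqueness of universal algebras recorded at the beginning of this section.

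The inclusion $\ker\regrep{H\cB}\subseteq J$ is immediate from Proposition \ref{prop:construction of general reg rep}. For any representation $T$ of $\cB_H$ that proposition gives $\intform{\Ind}_H^\cB(T)\cong\regrep{H\cB}\otimes_{\intform T}1$, and since $[\regrep{H\cB}\otimes_{\intform T}1]_f=\regrep{H\cB}(f)\otimes_{\intform T}1$, this operator vanishes whenever $\regrep{H\cB}(f)=0$; hence so does $\intform{\Ind}_H^\cB(T)_f$. As $T$ is arbitrary, every $f\in\ker\regrep{H\cB}$ lies in $J$.

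The reverse inclusion is the substantive point, and it rests on the claim that whenever $\intform T$ is faithful the map $\bB(L^2_H(\cB))\to\bB(L^2_H(\cB)\otimes_{\intform T}X)$, $S\mapsto S\otimes_{\intform T}1$, is injective. To prove this I would take $S$ with $S\otimes_{\intform T}1=0$ and use the inner-product formula recorded in the proof of Proposition \ref{prop:construction of general reg rep} to write $\langle(S\otimes_{\intform T}1)(f\otimes_{\intform T}\xi),\,g\otimes_{\intform T}\eta\rangle=\langle\intform T_{\langle g,Sf\rangle_H^\cB}\xi,\eta\rangle$ for all $f,g\in C_c(\cB)$ and $\xi,\eta\in X$. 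Vanishing of the left side for all $\xi,\eta$ forces $\intform T_{\langle g,Sf\rangle_H^\cB}=0$, and faithfulness of $\intform T$ then gives $\langle g,Sf\rangle_H^\cB=0$ for all $f,g$; non-degeneracy of the $C^*(\cB_H)$-valued inner product on $L^2_H(\cB)$ yields $Sf=0$ on the dense subspace $C_c(\cB)$, hence $S=0$. Granting the claim, fix $T$ with $\intform T$ faithful (such $T$ exists) and take $f\in J$; then $\regrep{H\cB}(f)\otimes_{\intform T}1\cong\intform{\Ind}_H^\cB(T)_f=0$, so injectivity gives $\regrep{H\cB}(f)=0$ and $J\subseteq\ker\regrep{H\cB}$.

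The main obstacle is precisely this injectivity statement; everything else is formal bookkeeping with the quotient maps and Proposition \ref{prop:construction of general reg rep}. Although the injectivity of interior tensoring by a faithful representation is a standard feature of Hilbert modules, some care is needed to match the inner-product convention $\langle f\otimes_{\intform T}\xi,g\otimes_{\intform T}\eta\rangle=\langle\intform T_{\langle g,f\rangle_H^\cB}\xi,\eta\rangle$ recorded earlier and to invoke non-degeneracy of $\langle\ ,\ \rangle_H^\cB$ on $L^2_H(\cB)$.
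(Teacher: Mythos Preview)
Your proof is correct and shares the same core with the paper's: both rely on Proposition~\ref{prop:construction of general reg rep} and on the fact that when $\intform{T}$ is faithful the amplification $\theta\colon M\mapsto M\otimes_{\intform T}1$ on $\bB(L^2_H(\cB))$ is injective. The only difference is packaging: the paper verifies conditions (i) and (ii) of universality directly by setting $\rho:=\theta|_{\regrep{H\cB}(C^*(\cB))}$, whereas you pass through the kernel identification $\ker\regrep{H\cB}=J$ and then transport universality from Proposition~\ref{prop: CHBB universal property}; you also spell out the injectivity of $\theta$, which the paper simply asserts.
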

\begin{proof}
	Let $T\colon \cB_H\to \bB(X)$ be a representation.
	By proposition \ref{prop:construction of general reg rep}, there exists a *-homomorphism $\theta\colon \bB(L^2_H(\cB))\to \bB(L^2_H(\cB)\otimes_{\intform{T}}X)$ such that $\theta(M)=M\otimes_{\intform{T}}1$ and $\theta\circ \regrep{H\cB} = \intform{\Ind}_H^\cB(T).$
	The restriction $\theta|_{\regrep{H\cB}(C^*(\cB))}\colon \regrep{H\cB}(C^*(\cB))\to \bB(X^p_T)$ is then the unique representation $\rho$ of $\regrep{H\cB}(C^*(\cB))$ such that $\rho\circ \regrep{H\cB}= \intform{\Ind}_H^\cB(T).$
	If $\intform{T}$ is faithful then $\theta$ is also, which clearly implies $\rho$ is faithful.
\end{proof}

\subsection{An example}\label{ssec:example}

Here we present an explicit example of a Fell bundle $\cB=\{B_t\}_{t\in G}$ and $H\leqslant G$ such that
\begin{itemize}
	\item The normalizer of $H$ is open (because $G$ is discrete).
	\item $\regrepEN{\cB}\colon C^*(\cB)\to C^*_\red(\cB)$ is not an isomorphism.
	\item For all $r\in H,$ $C^*_{rH\rmu \uparrow G}(\cB)=C^*(\cB)$ (see remark \ref{rem: CHGB constant by conjutation}).
	\item $C^*_{H\uparrow\cB}(\cB)=C^*(\cB)$ and if $r\in G\setminus H,$ then $C^*_{rH\rmu \uparrow\cB}(\cB)=C^*_\red(\cB).$
\end{itemize}

The lemma below is of key importance in the rest of the article, in particular in the example presented right after its proof.
We state it in full generality and not only for discrete groups.
By considering trivial Fell bundles over groups with constant fiber $\bC,$ it is easy to use the lemma to prove the analogous statement for unitary representations of groups.

\begin{lemma}\label{lem:wc induction and restriction}
	If $\cB=\{B_t\}_{t\in G}$ is a Fell bundle and the normalizer of $H\leqslant G$ is open, then
	\begin{enumerate}
		\item For every non degenerate representation $T$ of $\cB_H$ we have $T\preceq \Ind_H^\cB(T)|_{\cB_H}.$
		\item Given a non degenerate representation $T\colon \cB\to \bB(Y),$ $\intform{T|_{\cB_H}}$ is faithful if $\intform{T}$ is.
	\end{enumerate}	
\end{lemma}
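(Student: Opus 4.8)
The plan is to deduce (2) from (1) by purely formal weak-containment manipulations, and to prove (1) by first compressing to the normalizer and then invoking the normal-subgroup case.

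For (1), let $N$ denote the normalizer of $H$; by hypothesis $N$ is open, hence clopen, and $H\trianglelefteq N$. Since $N$ is clopen, $N/H$ is a clopen subset of $G/H$, so $\chi_{N/H}\in C_b(G/H)=M(C_0(G/H))$ and $P:=\overline{\psi}(\chi_{N/H})$ is a projection in $\bB(L^2_H(\cB))$, where $\psi$ is the representation of Remark \ref{rem:systems of imprimitivity}. The covariance relation ${\regrep{H\cB}}'_b\,\psi_f=\psi_{\tau_t(f)}\,{\regrep{H\cB}}'_b$ for $b\in B_t$, together with $\tau_t(\chi_{N/H})=\chi_{N/H}$ for $t\in N$ (because $t^{-1}r\in N\iff r\in N$), shows that $P$ commutes with $\regrep{H\cB}_b$ for every $b\in\cB_N$, in particular for $b\in\cB_H$. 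Transporting $P\otimes_{\intform{T}}1$ to $X^p_T$ via the unitary of Proposition \ref{prop:construction of general reg rep}, its range is the closed span of the $f\otimes_T\xi$ with $\supp f\subseteq N$; because $N$ is open the Haar measure of $G$ restricts to that of $N$, so this range, with the compressed $\cB_H$-action, is exactly the space of $\Ind_H^{\cB_N}(T)|_{\cB_H}$. Thus $\Ind_H^{\cB_N}(T)|_{\cB_H}$ is a subrepresentation of $\Ind_H^\cB(T)|_{\cB_H}$, whence $\Ind_H^{\cB_N}(T)|_{\cB_H}\preceq\Ind_H^\cB(T)|_{\cB_H}$.

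It remains to treat the normal case inside $N$, i.e. to show $T\preceq\Ind_H^{\cB_N}(T)|_{\cB_H}$; combined with the previous paragraph and transitivity of $\preceq$ this yields (1). Here $H$ is normal in $N$, so this is the situation of \cite[XI 12.8]{FlDr88}: I would either cite it directly or reprove it by approximating the functions of positive type $b\mapsto\langle T_b\xi,\xi\rangle$ ($b\in\cB_H$) by the coefficients $b\mapsto\langle\intform{T}_{p(f^**(bf))}\xi,\xi\rangle$ coming from vectors $f\otimes_T\xi$, letting $f$ run through an approximate unit of $C_c(\cB_N)$ concentrated at $e$ and using normality of $H$ (so that conjugation keeps sections inside $\cB_H$ and the modular factors $\Delta_H^N$ match). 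I expect this normal-subgroup coefficient estimate to be the main obstacle, precisely because $H$ need not be open in $N$, so that integration over $H$ does not localize and the limit must be controlled by hand.

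Finally, (2) follows formally from (1). Pick a non-degenerate representation $S$ of $\cB_H$ with $\intform{S}$ faithful. Since $\intform{T}$ is faithful, $\ker\intform{T}=0\subseteq\ker\intform{V}$ for every representation $V$ of $\cB$, i.e. $V\preceq T$; in particular $\Ind_H^\cB(S)\preceq T$, and restriction to $\cB_H$ preserves weak containment, so $\Ind_H^\cB(S)|_{\cB_H}\preceq T|_{\cB_H}$. By (1) we have $S\preceq\Ind_H^\cB(S)|_{\cB_H}$, and transitivity of $\preceq$ gives $S\preceq T|_{\cB_H}$. As $\intform{S}$ is faithful, this forces $\ker\intform{T|_{\cB_H}}\subseteq\ker\intform{S}=0$, that is, $\intform{T|_{\cB_H}}$ is faithful.
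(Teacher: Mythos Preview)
Your argument for (2) is exactly the paper's: pick $S$ with faithful integrated form on $C^*(\cB_H)$, use $\Ind_H^\cB(S)\preceq T$, restrict, and apply (1).

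For (1) your route differs from the paper's, though both are correct and end by citing the normal case \cite[XI 12.8]{FlDr88}. The paper never touches the imprimitivity projection: it simply invokes induction in stages \cite[XI 12.15]{FlDr88} to write $\Ind_H^\cB(T)\cong\Ind_N^\cB(\Ind_H^{\cB_N}(T))$, then applies the open-subgroup result \cite[XI 14.21]{FlDr88} to get $\Ind_N^\cB(S)|_{\cB_N}\succeq S$ with $S=\Ind_H^{\cB_N}(T)$, and finally restricts further to $\cB_H$ and uses \cite[XI 12.8]{FlDr88}. Your approach replaces the combination ``induction in stages $+$ \cite[XI 14.21]{FlDr88}'' by the concrete observation that $\overline{\psi}(\chi_{N/H})$ cuts out $L^2_H(\cB_N)\subset L^2_H(\cB)$ as a $\cB_N$-invariant summand, so that $\Ind_H^{\cB_N}(T)|_{\cB_H}$ is literally a subrepresentation of $\Ind_H^\cB(T)|_{\cB_H}$. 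This is a legitimate and perhaps more transparent substitute; it is in fact the mechanism underlying \cite[XI 14.21]{FlDr88} specialized to this situation. The paper's version is shorter because it quotes three black boxes; yours trades one citation for a direct check using Remark~\ref{rem:systems of imprimitivity}.
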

\begin{proof}
	By \cite[XI 12.8]{FlDr88} and \cite[XI 14.21]{FlDr88}, the first claim holds if $H$ is either open or closed in $H.$
	To deal with the general case we let $N$ be the normalizer of $H$ in $G.$
	Using induction in stages \cite[XI 12.15]{FlDr88}, the continuity of the induction and restriction processes with respect to the regional topology \cite[XI 12.4 \& VIII 21.20]{FlDr88} we get
	\begin{equation*}
	\Ind_H^\cB(T)|_{\cB_H}\cong \Ind_N^\cB(\Ind_H^{\cB_N}(T))|_{\cB_N}|_{\cB_H}\succeq \Ind_H^{\cB_N}(T)|_{\cB_H}\succeq T
	\end{equation*}
	and $\Ind_H^\cB(T)|_{\cB_H}\succeq T$ by transitivity.
	
	To prove the second claim we take non degenerate representations $S$ and $T$ of $\cB_H$ and  $\cB,$ respectively, with faithful integrated forms.
	We have $\Ind_H^\cB(S)\preceq T,$ so $S\preceq \Ind_H^\cB(S)|_{\cB_H}\preceq T|_{\cB_H}$  and  $\intform{T|_{\cB_H}}$ must be faithful because $\intform{S}$ is.
\end{proof}

Take for $G$ the free group in three generators, $\bF_3=<a,b,c>,$ and think of $\bF_2=<a,b>$ as a subgroup of $\bF_3.$
Let $\cT=\{\bC\delta_t\}_{t\in \bF_3}$ be the trivial Fell bundle and define 
\begin{equation*}
\cB:=\{ 0\delta_t \}_{t\in \bF_3\setminus \bF_2}\cup \{\bC\delta_t\}_{t\in \bF_2}\subset \cT.
\end{equation*}
Then $\cB$ is a Fell bundle with the structure inherited from $\cT.$

Non degenerate representations of $\cB$ and unitary representations of $\bF_2$ are in one to one correspondence via an association 
\begin{equation*}
(T\colon \cB\to \bB(X))\leftrightsquigarrow (U^T\colon \bF_2\to \bB(X))  
\end{equation*}
where $U^T_t:= T_{ 1 \delta_t }.$
This identification preserves direct sums, weak containment and unitary equivalence.

Given a non degenerate representation $T\colon \cB\to \bB(X)$ and $f\in C_c(\cB),$ let $f'\in C_c(\bF_2)$ be such that $f(t)=f'(t)\delta_s$ for all $t\in \bF_2.$
It is easy so show $\intform{T}_f = \intform{U}^T_{f'}$ and this implies the existence of a unique isomorphism $\pi\colon C^*(\cB)\to C^*(\bF_2)$ extending $C_c(\cB)\to C_c(\bF_2),$ $f\mapsto f'.$ 
Hence, $C^*(\cB)=C^*(\bF_2).$

Fix $r\in \bF_3$ and set $H:=t\bF_2\tmu.$
We want to identify $C^*_{H\uparrow\bF_3}(\cB)$ and $C^*_{H\uparrow \cB}(\cB).$
According to remark \ref{rem: CHGB constant by conjutation}, $C^*_{H\uparrow\bF_3}(\cB)=C^*_{\bF_2\uparrow\bF_3}(\cB)$ and to compute this last algebra we take non degenerate faithful representations $\intform{T}\colon C^*(\cB)\to \bB(X)$ and $\intform{V}\colon C^*(\bF_2)\to \bB(Y).$
We have
\begin{equation*}
U^{T\otimes \Ind_{\bF_2}^{\bF_3}(V)}\cong U^T\otimes \Ind_{\bF_2}^{\bF_3}(V)|_{\bF_2}.
\end{equation*}
Since $V$ weakly contains the trivial representation of $\bF_2$ and $V\preceq \Ind_{\bF_2}^{\bF_3}(V)|_{\bF_2}$ (lemma \ref{lem:wc induction and restriction}), we have  $U^T\preceq U^T\otimes V\preceq U^T\otimes \Ind_{\bF_2}^{\bF_3}(V)|_{\bF_2}.$
Thus the integrated form of $U^{T\otimes \Ind_{\bF_2}^{\bF_3}(V)}$ is faithful and, consequently, $ T\intform{\otimes}\Ind_{\bF_2}^{\bF_3}(V) $ is a faithful representation of $C^*(\cB)$ that factors through a representation of $C^*_{\bF_2\uparrow \bF_3}(\cB)$ via $q^{\bF_2\uparrow \bF_3}_\cB\colon C^*(\cB)\to C^*_{\bF_2\uparrow \bF_3}(\cB).$
It is then clear that $q^{\bF_2\uparrow \bF_3}_\cB$ is faithful and we may think $C^*_{\bF_2\uparrow \bF_3}(\cB) =C^*(\bF_2).$

To compute $C^*_{H\uparrow\cB}(\cB)$ we must consider the representations of $\cB_H.$
The only non zero fibers of $\cB_H$ are those over $H\cap \bF_2.$
We divide the discussion in two cases: $t\in \bF_2$ and $t\notin\bF_2.$
In the first case, $\cB_H$ is the trivial bundle over the group $H=\bF_2$  and representations of $\cB,$ $\cB_H$ and $\bF_2$ are in one to one correspondence $T\leftrightsquigarrow T|_{\cB_H}\leftrightsquigarrow U^T.$
Moreover, this association preserves weak containment and direct sums.
By Lemma \ref{lem:wc induction and restriction}, for every representation $S$ of $\cB_H$ we have $S\preceq \Ind_H^\cB(S)|_{\cB_H}$ so $\intform{\Ind}_H^\cB(S)$ is faithful if $\intform{S}$ is.
By proposition \ref{prop: CHBB universal property}, $q^{H\uparrow\cB}\colon C^*(\cB)\to C^*_{H\uparrow\cB}(\cB)$ is an isomorphism and thus we get $C^*_{H\uparrow\cB}(\cB) = C^*(\bF_2).$

Now assume $t\notin \bF_2,$ in which case the word $t$ contains a letter $c$ or $c^{-1}$ and this implies $H\cap \bF_2=\{e\}.$
The only non zero fiber of $\cB_H$ is then $B_e=\bC\delta_e$ and for every representation $T$ of $\cB_H$ we have $T\cong \Ind_{\{e\}}^{\cB_H}(T|_{B_e}).$
By induction in stages \cite[XI 12.15]{FlDr88} and proposition \ref{prop: CHBB universal property}, $C^*_{H\uparrow \cB}(\cB)=C^*_\red(\cB).$
For the identity $\phi\colon B_e\to \bC,$ $U^{\Ind_{\{e\}}^{\cB}(\phi)}$ is the regular representation of $\bF_2$ and thus we may think of $q^{H\uparrow \cB}\colon C^*(\cB)\to C^*_{H\uparrow\cB}(\cB)$ as the regular representation $\regrep{\bF_2}\colon C^*(\bF_2)\to C^*_\red(\bF_2).$

\subsection{Exotic coactions}

Let $G$ be a LCH group and fix $H\leqslant G.$
For the trivial Fell bundle $\cT_G=\{\bC\delta_t\}_{t\in G}$ we have $C^*(G)=C^*(\cT_G).$
Since $\cT_G$ is saturated, we have a *-homomorphism $\pi^{\cT_G}_H$ as in \eqref{equ: pi uparrow H}.
We know $\pi^{\cT_G}_H$ is an isomorphism if $H$ is normal.
The reason for this is, basically, that the trivial representation $\tau_H\colon H\to \bC$ is weakly contained in $\Ind_H^G(\tau_H)|_H,$ which also happens if the normalizer of $H$ is open in $G$ (lemma \ref{lem:wc induction and restriction}).
In \cite{Derighetti} Derighetti gives a number of conditions implying $\tau_H\preceq \Ind_H^G(\tau_H)|_H.$

\begin{remark}
	If there exists a representation $V$ of $G$ such that $\tau_H\preceq V|_H,$ then $\pi^{\cT_G}_H$ is an isomorphism.
	Indeed, for every representation $U$ of $H$ with faithful integrated form, $\Ind_H^G(U)\preceq \Ind_H^G( V|_H\otimes U )\cong V\otimes \Ind_H^G(U).$
	Hence, $\|q^{H\uparrow\cT_G}(f)\|\leq \| [V\otimes \Ind_H^G(U)]_f \|\leq \|q^{H\uparrow G}_{\cT_G}(f)\|=\|\pi^{\cT_G}_H(q^{H\uparrow \cT_G}(f))\|$ for all $f\in C^*(G),$ implies the desired result.
\end{remark}

\begin{definition}
	We say $G$ satisfies Derighetti's weak condition with respect to $H\leqslant G$ if for some representation $U$ of $H$ it follows that $\tau_H\preceq \Ind_H^G(U)|_H,$ 
\end{definition}

We now focus our attention on the quotients $Q^G_H:=C^*_{H\uparrow G}(\cT_G)$ of $C^*(G)$ and the  natural quotient maps $q^G_H\equiv q^{H\uparrow G}_{\cT_G}\colon C^*(G)\to Q^H_G.$

\begin{theorem}\label{thm: exotic coaction}
	If $\cB=\{B_t\}_{t\in G}$ is a Fell bundle and  $\delta\colon C^*(\cB)\to M(C^*(\cB)\otimes Q^G_H)$ the *-homomorphism corresponding to $q^G_H$ (see the introduction), then there exists a unique *-homomorphism $\rho \colon C^*_{H\uparrow G}(\cB)\to M(C^*_{H\uparrow G}(\cB)\otimes Q^G_H)$ making 
	\begin{equation*}
	\xymatrix{ C^*(\cB) \ar[rr]^{\delta} \ar[d]_{q^{H\uparrow G}_\cB} & & M(C^*(\cB)\otimes Q^G_H)\ar[d]^{\overline{q^{H\uparrow G}_\cB\otimes 1}} \\   	
		C^*_{H\uparrow G}(\cB)\ar[rr]_\rho	& & M(C^*_{H\uparrow G}(\cB)\otimes Q^G_H) }
	\end{equation*}
	a commutative diagram. 
	Moreover, $\rho$ is faithful if $G$ satisfies Derighetti's weak condition with respect to $H.$
\end{theorem}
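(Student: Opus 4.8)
The plan is to reduce the whole statement to one kernel computation. Write $\Theta:=\overline{q^{H\uparrow G}_\cB\otimes 1}\circ\delta\colon C^*(\cB)\to M(C^*_{H\uparrow G}(\cB)\otimes Q^G_H).$ Since $q^{H\uparrow G}_\cB$ is surjective, a $*$-homomorphism $\rho$ with $\rho\circ q^{H\uparrow G}_\cB=\Theta$ is unique if it exists, it exists exactly when $\ker q^{H\uparrow G}_\cB\subseteq\ker\Theta$ (in which case it descends as a $*$-homomorphism), and it is faithful precisely when $\ker\Theta\subseteq\ker q^{H\uparrow G}_\cB.$ So everything comes down to locating $\ker\Theta.$

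To compute $\ker\Theta$ I would test $\Theta$ against faithful representations. By Proposition \ref{prop: CHGB universal property} choose a faithful $\Phi\colon C^*_{H\uparrow G}(\cB)\to\bB(\mathcal{K})$ with $\Phi\circ q^{H\uparrow G}_\cB=T\intform{\otimes}\Ind_H^G(U),$ where $\intform{T}$ and $\intform{U}$ are faithful, and take any faithful nondegenerate $\Psi\colon Q^G_H\to\bB(\mathcal{L}),$ writing $V$ for the disintegration of $\Psi\circ q^G_H.$ Then $\Phi\otimes\Psi$ is faithful and nondegenerate, so $\overline{\Phi\otimes\Psi}$ is injective on the multiplier algebra and $\ker\Theta=\ker(\overline{\Phi\otimes\Psi}\circ\Theta).$ The crux is the identification
\[
\overline{\Phi\otimes\Psi}\circ\Theta=\overline{(\Phi\circ q^{H\uparrow G}_\cB)\otimes\Psi}\circ\delta=(T\otimes\Ind_H^G(U))\intform{\otimes}V\cong T\intform{\otimes}\Ind_H^G(V|_H\otimes U),
\]
whose three steps are: compatibility of multiplier extensions with composition; the defining formula $\overline{\pi\otimes\rho}\circ\delta=S\intform{\otimes}U$ from the introduction (applied with $S=T\otimes\Ind_H^G(U)$ disintegrating $\Phi\circ q^{H\uparrow G}_\cB$ and $V$ as above); and Fell's absorption principle \eqref{equ:Fell abst prin} for the saturated bundle $\cT_G$ (remark \ref{rem:trivial bundle}), which gives $\Ind_H^G(U)\otimes V\cong\Ind_H^G(V|_H\otimes U)$ after tensoring throughout by the fixed $T.$ Hence $\ker\Theta=\ker\big(T\intform{\otimes}\Ind_H^G(V|_H\otimes U)\big).$

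Existence of $\rho$ is then immediate: $T\otimes\Ind_H^G(V|_H\otimes U)$ belongs to the family of representations for which $C^*_{H\uparrow G}(\cB)$ is universal (Proposition \ref{prop: CHGB universal property}), so its integrated form factors through $q^{H\uparrow G}_\cB$ and therefore $\ker q^{H\uparrow G}_\cB\subseteq\ker\Theta.$ This produces the unique $\rho$ making the square commute.

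For the faithfulness clause I need the reverse inclusion $\ker\Theta\subseteq\ker q^{H\uparrow G}_\cB.$ Since $\Phi$ is faithful, $\ker q^{H\uparrow G}_\cB=\ker\big(T\intform{\otimes}\Ind_H^G(U)\big),$ so the inclusion is exactly the weak containment $T\otimes\Ind_H^G(U)\preceq T\otimes\Ind_H^G(V|_H\otimes U).$ Here Derighetti's weak condition enters: pick $U_0$ with $\tau_H\preceq\Ind_H^G(U_0)|_H$; since $\Psi$ is faithful on the universal algebra $Q^G_H$ we have $\Ind_H^G(U_0)\preceq V,$ hence $\tau_H\preceq\Ind_H^G(U_0)|_H\preceq V|_H,$ so $U=\tau_H\otimes U\preceq V|_H\otimes U,$ then $\Ind_H^G(U)\preceq\Ind_H^G(V|_H\otimes U)$ by continuity of induction \cite[XI 12.4]{FlDr88}, and finally $T\otimes\Ind_H^G(U)\preceq T\otimes\Ind_H^G(V|_H\otimes U).$ The main obstacle I anticipate is the bookkeeping in the displayed identification—composing the several multiplier extensions and matching disintegrations so that the single formula $\overline{\pi\otimes\rho}\circ\delta=S\intform{\otimes}U$ and Fell's absorption apply—together with the (standard, via Fell's positive-type criterion for the regional topology) fact that tensoring with the fixed bundle representation $T$ preserves weak containment, on which the last implication relies.
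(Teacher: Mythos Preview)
Your proof is correct and follows essentially the same approach as the paper's: both compute $\ker\Theta$ by testing against a faithful representation $\Phi\otimes\Psi$ of $C^*_{H\uparrow G}(\cB)\otimes Q^G_H$, identify $\overline{\Phi\otimes\Psi}\circ\Theta$ via Fell's absorption principle for $\cT_G$, and then invoke Proposition~\ref{prop: CHGB universal property} for existence and Derighetti's condition for faithfulness. The only difference is that you take $\Psi$ to be an arbitrary faithful representation of $Q^G_H$ (so a single application of Fell's absorption gives $T\otimes\Ind_H^G(V|_H\otimes U)$), whereas the paper chooses $\Psi$ of the specific form $V\intform{\otimes}\Ind_H^G(U)$ with $\intform{V}$ faithful on $C^*(G)$, which forces two applications of absorption and yields $T\otimes\Ind_H^G(V|_H\otimes\Ind_H^G(U)|_H\otimes U)$; your choice is a slight streamlining but not a different strategy.
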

\begin{proof}
	Let $\intform{T}\colon C^*(\cB)\to \bB(X),$ $\intform{U}\colon C^*(H)\to \bB(Y)$ and $\intform{V}\colon C^*(G)\to \bB(Z)$ be non degenerate faithful representations.
	By proposition \ref{prop: CHGB universal property}, there are unique faithful representations $\mu\colon C^*_{H\uparrow G}(\cB)\to \bB( X\otimes Y )$ and $\nu\colon Q^G_H\to \bB(Z\otimes Y)$ such that $\mu\circ q^{H\uparrow G}_\cB = T\intform{\otimes}\Ind_H^G(U)$ and $\nu\circ q^G_H = V\intform{\otimes}\Ind_H^G(U).$
	We make extensive use of the faithful non degenerate representation $\mu\otimes \nu\colon C^*_{H\uparrow G}(\cB)\otimes Q^G_H\to \bB(X\otimes Y\otimes Z\otimes Y)$ and of its extension $\overline{\mu\otimes \nu}$ to the multiplier algebra.
	
	We have $(\mu\otimes \nu)\circ (q^{H\uparrow G}_\cB\otimes 1)= (T\intform{\otimes} \Ind_H^G(U))\otimes \mu$ and this implies $\overline{(\mu\otimes \nu)}\circ \overline{(q^{H\uparrow G}_\cB\otimes 1)}\circ \delta$ is the integrated form of
	\begin{align*}
	(T\otimes \Ind_H^G(U))\otimes (V\otimes \Ind_H^G(U)) & \cong T\otimes V \otimes \Ind_H^G(\Ind_H^G(U)|_H\otimes U)\\
	& \cong T\otimes \Ind_H^G(V|_H\otimes \Ind_H^G(U)|_H\otimes U).
	\end{align*}
	It then follows from proposition \ref{prop: CHGB universal property} that for all $f\in C^*(\cB)$ we have
	\begin{equation}\label{equ: exotic coaction}
	\| \overline{(q^{H\uparrow G}_\cB\otimes 1)}\circ \delta(f) \| = \|\overline{(\mu\otimes \nu)}\circ \overline{(q^{H\uparrow G}_\cB\otimes 1)}\circ \delta(f)\|\leq \|q^{H\uparrow G}_\cB(f)\|,
	\end{equation}
	and the existence of $\rho$ follows (uniqueness being immediate).
	
	Assume $G$ satisfies Derighetti's weak condition with respect to $H$ and let $W$ be a representation of $H$ such that $\tau_H\preceq \Ind_H^G(W)|_H.$
	Then $W\preceq U\Rightarrow \Ind_H^G(W)\preceq \Ind_H^G(U)\Rightarrow \tau_H\preceq \Ind_H^G(W)|_H\preceq \Ind_H^G(U)|_H\Rightarrow \tau_H\preceq \Ind_H^G(U)|_H.$
	In addition, $\tau_H\preceq V|_H$ because $\Ind_H^G(U)\preceq V.$
	Hence, $U\cong \tau_H\otimes \tau_H\otimes U\preceq V|_H\otimes \Ind_H^G(U)|_H\otimes U$
	and the inequality of \eqref{equ: exotic coaction} becomes an equality (implying $\rho$ is isometric).   
\end{proof}

Let $\rho^\cB\colon C^*_{H\uparrow G}(\cB)\to M(C^*_{H\uparrow G}\otimes Q^G_H) $ and $\rho^{HG}\colon Q^G_H\to M(Q^H_H\otimes Q^G_H)$ be the *-homomorphisms given by the theorem above.
To prove the coaction identity $\overline{(\rho^\cB \otimes \iota )}\circ \rho^\cB = \overline{(\iota  \otimes \rho^{HG} ) }\circ \rho^\cB $ one may take faithful non degenerate representations $\intform{T},$ $\intform{U}$ and $\intform{V}$ of $C^*(\cB),$ $C^*(H)$ and $C^*(G),$ respectively, and consider the (faithful) representations $\mu$ and $\nu $ of $C^*_{H\uparrow G}(\cB)$ and $Q^G_H,$ respectively, such that $\mu\circ q^{H\uparrow G}_\cB=T\intform{\otimes} \Ind_H^G(U)$ and $\nu\circ q^G_H = V\intform{\otimes} \Ind_H^G(U).$
The extension $\overline{(\mu\otimes \nu\otimes \nu)}$ is a faithful representation of $M(C^*_{H\uparrow G}(\cB)\otimes Q^G_H\otimes Q^G_H)$ and $\overline{(\mu\otimes \nu\otimes \nu)} \circ \overline{(\rho^\cB \otimes \iota)}\circ \rho^\cB\circ  q^{H\uparrow G}_\cB$ is the integrated form of 
\begin{equation*}
[T\otimes \Ind_H^G(U)]\otimes [ V\otimes \Ind_H^G(U) ]\otimes [ V\otimes \Ind_H^G(U) ],
\end{equation*}
as that of $\overline{(\mu\otimes \nu\otimes \nu)}\circ \overline{(\iota \otimes \rho^{HG} ) }\circ \rho^\cB\circ q^{H\uparrow G}_\cB$ is,
implying 
\begin{align*}
\overline{(\mu\otimes \nu\otimes \nu)} \circ \overline{(\rho^\cB \otimes \iota)}\circ \rho^\cB\circ  q^{H\uparrow G}_\cB = \overline{(\mu\otimes \nu\otimes \nu)}\circ \overline{(\iota \otimes \rho^{HG} ) }\circ \rho^\cB\circ q^{H\uparrow G}_\cB.
\end{align*}

The identity above implies $\overline{(\rho^\cB \otimes \iota )}\circ \rho^\cB = \overline{(\iota  \otimes \rho^{HG} ) }\circ \rho^\cB $ because $\overline{(\mu\otimes \nu\otimes \nu)}$ is faithful and $q^{H\uparrow G}_\cB$ surjective.

\subsection{Inner amenable groups}

Buss, Echterhof and Willet have asked \cite[Question 8.8]{buss2020amenability} for a class $\cC$ of groups for which the nuclearity of $A\rtimes_{\red \alpha }G$ implies the amenability of the action $\alpha.$
McKee and Pourshashami \cite{mckee2020amenable} showed $\cC$ contains all inner amenable groups.

We are not dealing with amenability here, but with weak containment.
So it is natural to replace nuclearity of $A\rtimes_{\red \alpha }G$ with something weaker. 
We propose $(A\rtimes_{\red \alpha }G)\otimes_{\max} C^*_\red(G) =(A\rtimes_{\red \alpha }G)\otimes C^*_\red(G) $ and ask for which class $\cC$ of groups this condition implies $A\rtimes_\alpha G=A\rtimes_{\red \alpha }G.$
In Corollary \ref{cor:inner amenability and weak containment} we show $\cC$ contains all inner amenable groups.

Following the ideas presented in the introduction, given a Fell bundle $\cB=\{B_t\}_{t\in G}$ one can construct a *-homomorphism
\begin{equation*}
\Phi\colon C^*(\cB)\to M(C^*(\cB)\otimes_{\max} C^*(G))
\end{equation*}
such that given a non degenerate representations $\pi\colon C^*(\cB)\otimes_{\max} C^*(G)\to \bB(X),$ $\overline{\pi}\circ\Phi$ is the integrated form of $TU\colon \cB\to \bB(X),$ $(b\in B_t)\mapsto T_bU_t,$ with the integrated forms of $T\colon \cB\to \bB(X)$ and $U\colon G\to \bB(X)$ being $f\mapsto \overline{\pi}(f\otimes_{\max} 1)$ and $g\mapsto\overline{\pi}(1\otimes_{\max} g),$ respectively.
One may arrange $\pi$ so that $U\colon G\to \bB(X)$ is trivial ($t\mapsto 1$) and $\intform{T}$ is faithful, so $\overline{\pi}\circ \Phi=\intform{T} $  and it follows that $\Phi$ is faithful.

It is shown in \cite{ExlibroAMS} that if $G$ is discrete, then the diagonal arrow of the commutative diagram
\begin{equation*}
\xymatrix{ C^*(\cB)\ar[rr]^\Phi\ar[drr]_{\Phi^r} & & M(C^*(\cB)\otimes_{\max} C^*(G))\ar[d]^{\overline{\regrepEN{\cB} \otimes_{\max} \intform{\lambda} }}\\ & &M(C^*_\red(\cB)\otimes_{\max} C^*_\red(G)) }
\end{equation*}
is faithful.
This is not an exclusive property of discrete groups.

\begin{proposition}
	If $G$ is inner amenable, then $\Phi^\red$ is faithful.
\end{proposition}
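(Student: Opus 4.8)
The plan is to show that $\Phi^\red$ is isometric. Since $\Phi$ is faithful and the extension $\overline{\lambda_\cB\otimes_{\max}\intform\lambda}$ of a $*$-homomorphism is contractive, we already have $\|\Phi^\red(a)\|\le\|\Phi(a)\|=\|a\|$ for every $a\in C^*(\cB)$, so only the reverse inequality needs proof. The representations factoring through $\Phi^\red$ are easy to describe: if $\pi$ is the representation of $C^*_\red(\cB)\otimes_{\max}C^*_\red(G)$ determined by a commuting pair $(T,U)$, where $T$ is a representation of $\cB$ with $\intform T\preceq\lambda_\cB$ and $U$ a representation of $G$ with $\intform U\preceq\intform\lambda$, then the defining property of $\Phi$ gives $\overline{\pi}\circ\Phi^\red=\widetilde{TU}$, with $TU\colon b\in B_t\mapsto T_bU_t$. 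As $\overline{\pi}$ is contractive, $\|\widetilde{TU}_a\|\le\|\Phi^\red(a)\|$ for every such pair. It therefore suffices to produce, for an arbitrary non degenerate representation $S$ of $\cB$, such a pair with $\intform S\preceq\widetilde{TU}$; taking the supremum over $S$ then yields $\|a\|\le\|\Phi^\red(a)\|$.

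The construction I would use is the following. Fix a non degenerate $S\colon\cB\to\bB(X)$ and let $\rho$ denote the right regular representation of $G$ on $L^2(G)$. Set $T:=S\otimes\lambda$ and $U:=1\otimes\rho$, both acting on $X\otimes L^2(G)$. By Exel--Ng's absorption principle \eqref{equ:Exel Ng abst prin}, $S\otimes\lambda\approx\Ind_{\{e\}}^\cB(S|_{B_e})$, and since $\intform{\Ind}_{\{e\}}^\cB(S|_{B_e})=\lambda_\cB\otimes_{S|_{B_e}}1$ factors through $\lambda_\cB$, we get $\intform T\preceq\lambda_\cB$. Because $\rho\cong\lambda$ and amplifications are weakly equivalent to the underlying representation, $U=1\otimes\rho\approx\lambda$, so $\intform U\preceq\intform\lambda$. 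Finally $T$ and $U$ commute, as $\lambda$ and $\rho$ do, and their product is
\begin{equation*}
(TU)_b=(S_b\otimes\lambda_t)(1\otimes\rho_t)=S_b\otimes c_t\qquad(b\in B_t),
\end{equation*}
where $c_t:=\lambda_t\rho_t$ is the conjugation unitary, $(c_t\xi)(s)=\Delta_G(t)^{1/2}\xi(t^{-1}st)$.

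It remains to verify $\intform S\preceq\widetilde{TU}$, and this is exactly where inner amenability enters. By inner amenability there is a net $(\xi_i)$ of unit vectors in $L^2(G)$ that is asymptotically conjugation-invariant, i.e. $\|c_t\xi_i-\xi_i\|\to0$ uniformly for $t$ in compact subsets of $G$; equivalently, the trivial representation is weakly contained in the conjugation representation $c$. For $\eta\in X$ and $f\in C_c(\cB)$ the vector functional of $\widetilde{TU}$ at $\eta\otimes\xi_i$ is
\begin{equation*}
\langle\widetilde{TU}_f(\eta\otimes\xi_i),\eta\otimes\xi_i\rangle=\int_G\langle S_{f(t)}\eta,\eta\rangle\,\langle c_t\xi_i,\xi_i\rangle\,dt,
\end{equation*}
and since $\langle c_t\xi_i,\xi_i\rangle\to1$ uniformly on the compact support of $f$ while $\|\eta\otimes\xi_i\|=\|\eta\|$ stays bounded, these functionals converge to $\langle\intform S_f\eta,\eta\rangle$. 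Thus every vector functional of $\intform S$ is a weak-$*$ limit of functionals associated with $\widetilde{TU}$, giving $\intform S\preceq\widetilde{TU}$ and hence $\|\intform S_a\|\le\|\widetilde{TU}_a\|\le\|\Phi^\red(a)\|$. Taking the supremum over all non degenerate $S$ gives $\|a\|\le\|\Phi^\red(a)\|$, so $\Phi^\red$ is isometric, in particular faithful.

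The main obstacle is the step $\intform S\preceq\widetilde{TU}$: one must invoke exactly the right form of inner amenability (the asymptotically conjugation-invariant net in $L^2(G)$, uniform on compacta) and carefully pass from convergence of matrix coefficients on the dense subalgebra $C_c(\cB)$ to weak containment of representations of $C^*(\cB)$, using the uniform bound on the functionals. The remaining verifications---that $\intform T\preceq\lambda_\cB$, that $\intform U\preceq\intform\lambda$, and that $(T,U)$ induces a representation of the maximal tensor product whose composition with $\Phi^\red$ is $\widetilde{TU}$---are routine given Exel--Ng's absorption principle and the defining property of $\Phi$.
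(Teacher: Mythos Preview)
Your proof is correct and follows essentially the same route as the paper's: build a commuting pair on $X\otimes L^2(G)$ from $S\otimes\lambda$ (factoring through $C^*_\red(\cB)$ via Exel--Ng) and $1\otimes\rho$ (factoring through $C^*_\red(G)$), so that $\overline{\pi}\circ\Phi^\red$ is the integrated form of $S\otimes\omega$ with $\omega_t=\lambda_t\rho_t$, and then use inner amenability ($\tau_G\preceq\omega$, hence $S\preceq S\otimes\omega$) to conclude. The only cosmetic differences are that the paper swaps the roles of $\lambda$ and $\rho$, works with a single faithful $T$ rather than taking a supremum over all $S$, and invokes $T\preceq T\otimes\omega$ directly instead of spelling out the matrix-coefficient argument you give.
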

\begin{proof}
	Let $\intform{T}\colon C^*(\cB)\to \bB(X)$ a non degenerate faithful representation.
	The left and right regular representations $\lambda,\rho\colon G\to \bB(L^2(G))$ commute and are unitary equivalent, so the ranges of $T\otimes \rho$ and $1\otimes\lambda $ commute and their integrated forms combine to produce a representation
	\begin{equation*}
	\pi\colon C^*_\red(\cB)\otimes_{\max} C^*_\red(G)\to \bB(X\otimes L^2(G))
	\end{equation*}
	
	The composition $\overline{\pi}\circ \overline{\regrepEN{\cB} \otimes_{\max} \intform{\lambda} }\circ \Phi = \overline{\pi}\circ \Phi^\red$ is the integrated form of $T\otimes \omega$ with $\omega\colon G\to \bB(L^2(G))$ given by $\omega_t =\lambda_t\rho_t.$
	To say $G$ is inner amenable is equivalent to say $\omega$ weakly contains the trivial representation of $G,$ so $T\preceq T\otimes \omega$ and it follows that $\overline{\pi}\circ \Phi^\red =T\intform{\otimes}\omega$ is faithful.
	Hence, $\Phi^r$ is faithful.	
\end{proof}

\begin{corollary}\label{cor:inner amenability and weak containment}
	If $G$ is inner amenable and $C^*_\red(\cB)\otimes_{\max} C^*_\red(G)=C^*_\red(\cB)\otimes C^*_\red(G),$ then $C^*(\cB)=C^*_\red(\cB).$
\end{corollary}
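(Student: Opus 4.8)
The plan is to derive the corollary from the preceding proposition, which already gives that $\Phi^\red$ is faithful, by showing that $\Phi^\red$ factors through the reduced quotient $\regrepEN{\cB}\colon C^*(\cB)\to C^*_\red(\cB)$. Since $\regrepEN{\cB}$ is surjective, it is an isomorphism as soon as it is injective, so it suffices to prove $\ker\regrepEN{\cB}\subseteq\ker\Phi^\red$; together with the injectivity of $\Phi^\red$ this forces $\ker\regrepEN{\cB}=0$, i.e. $C^*(\cB)=C^*_\red(\cB)$.

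First I would fix a faithful nondegenerate representation $\sigma_1\colon C^*_\red(\cB)\to\bB(H_1)$ and take $\sigma_2$ to be the canonical faithful representation $C^*_\red(G)\hookrightarrow\bB(L^2(G))$, so that $\sigma_2\circ\intform{\lambda}=\intform{\lambda}$ disintegrates to the regular representation $\lambda\colon G\to\bB(L^2(G))$. The external tensor product $\sigma_1\otimes\sigma_2$ is a faithful nondegenerate representation of the \emph{minimal} tensor product $C^*_\red(\cB)\otimes C^*_\red(G)$. This is the only place the hypothesis is used: it identifies that algebra with $C^*_\red(\cB)\omax C^*_\red(G)$ and thereby makes the multiplier extension $\overline{\sigma_1\otimes\sigma_2}$ a faithful representation of $M(C^*_\red(\cB)\omax C^*_\red(G))$, the algebra in which $\Phi^\red$ takes values.

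Next I would compute $\overline{\sigma_1\otimes\sigma_2}\circ\Phi^\red$. Writing $\Phi^\red=\overline{\regrepEN{\cB}\omax\intform{\lambda}}\circ\Phi$ and setting $\pi:=(\sigma_1\circ\regrepEN{\cB})\omax\intform{\lambda}$, a representation of $C^*(\cB)\omax C^*(G)$, one has $\overline{\sigma_1\otimes\sigma_2}\circ\Phi^\red=\overline{\pi}\circ\Phi$; by the defining property of $\Phi$ recalled before the proposition, this is $\widetilde{R\otimes\lambda}$, where $R\colon\cB\to\bB(H_1)$ is the disintegration of $\sigma_1\circ\regrepEN{\cB}$. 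Since $R$ is nondegenerate, Exel--Ng's absorption principle \eqref{equ:Exel Ng abst prin} gives $R\otimes\lambda\approx\Ind_{\{e\}}^\cB(R|_{B_e})$; by Proposition \ref{prop:construction of general reg rep} (with $H=\{e\}$) and remark \ref{rem:H=e} the integrated form of the right-hand side is $\regrepEN{\cB}\otimes_{\widetilde{R|_{B_e}}}1$, which visibly factors through $\regrepEN{\cB}$. Hence $\ker\regrepEN{\cB}\subseteq\ker\widetilde{R\otimes\lambda}$, and for any $f$ with $\regrepEN{\cB}(f)=0$ we get $\overline{\sigma_1\otimes\sigma_2}(\Phi^\red(f))=\widetilde{R\otimes\lambda}(f)=0$; faithfulness of $\overline{\sigma_1\otimes\sigma_2}$ then yields $\Phi^\red(f)=0$, as required.

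I expect the main obstacle to be the factorization step itself: checking that $\overline{\sigma_1\otimes\sigma_2}\circ\Phi^\red$ is exactly $\widetilde{R\otimes\lambda}$ and then recognizing, via Exel--Ng's absorption principle, that this representation annihilates $\ker\regrepEN{\cB}$. This is where the conceptual content lies, and it is also where the hypothesis is indispensable: without it one only lands in the maximal tensor product, where $\sigma_1\omax\sigma_2$ need not be faithful and the implication breaks down. Thus the equality $C^*_\red(\cB)\omax C^*_\red(G)=C^*_\red(\cB)\otimes C^*_\red(G)$ is precisely the ingredient that converts the faithfulness of $\Phi^\red$ (furnished by inner amenability) into the faithfulness of $\regrepEN{\cB}$.
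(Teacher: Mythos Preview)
Your argument is correct and follows the same strategy as the paper: use inner amenability to get $\Phi^\red$ faithful, then show $\Phi^\red$ factors through $\regrepEN{\cB}$, forcing $\ker\regrepEN{\cB}=0$. The paper compresses the factorization step by identifying $\Phi^\red$ (under the hypothesis) with the map $\delta$ defining $C^*_{\{e\}\uparrow G}(\cB)=C^*_\red(\cB)$; your route through a concrete faithful representation and Exel--Ng's absorption principle reaches the same conclusion, though you could shorten it by invoking Proposition~\ref{prop: CHGB universal property} with $H=\{e\}$, which already says $\widetilde{R\otimes\lambda}$ factors through $q^{\{e\}\uparrow G}_\cB=\regrepEN{\cB}$ without appealing to the absorption principle.
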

\begin{proof}
	For $H=\{e\}$ the *-homomorphism $\delta\colon C^*(\cB)\to \bB( C^*(\cB)\otimes Q^G_H)$ we use to define $C^*_{H\uparrow G}(\cB)=\delta(C^*(\cB))$ is $\Phi^\red$ and $C^*_{H\uparrow G}(\cB)=C^*_\red(\cB).$
	But $\Phi^\red$ is faithful, so  $C^*(\cB)=C^*_\red(\cB).$
\end{proof}

\section{An absorption principle}\label{sec: abs prin}

Fell's absorption principle does not hold for non saturated Fell bundles.
Indeed, the example of section \ref{ssec:example} reveals $ T\intform{\otimes} \Ind_{r\bF_2 \rmu}^\cB(V)$ may be a faithful representation of $C^*(\cB)$ while $\Ind_{r\bF_2\rmu}^\cB(T|_{\cB_{r\bF_2\rmu}}\otimes V)$ factors through a faithful representation of $C^*_\red(\cB)\neq C^*(\cB).$
Therefore, $\Ind_{r\bF_2\rmu}^\cB(T|_{\cB_{r\bF_2\rmu}}\otimes V)$ can not be unitary equivalent to $T\intform{\otimes} \Ind_{r\bF_2\rmu}^\cB(V)$ (not even weakly equivalent).

The theorem we present below is our best answer to question \eqref{item: q general abs prin} of the Introduction on Fell's principle.
It emerged during our attempt to generalize Fell's and Exel-Ng's absorption principles.
The computations revealed that the ``moving around'' technique of \cite[pp 515]{ExNg} is appropriate when the inducing subgroup $H$ is stable by conjugation (normal).
In the general case, conjugation ``moves $H$ around'' and this is the reason why the conjugated subgroups $tH\tmu$ appear bellow.
The proof is quite technical, we suggest to skip it on a first read.

\begin{theorem}\label{thm:Fells absorption principle I}
	Let $\cB=\{B_t\}_{t\in G}$ be a Fell bundle,  $H\leqslant G,$ $T\colon \cB\to \bB(X)$ a non degenerate representation and $U\colon H\to \bB(Y)$ a unitary representation.
	If for each $t\in G$  we denote ${}_tU$ the conjugated representation $tH\tmu \to \bB(Y), \ r\mapsto U_{\tmu r t},$ then 
	\begin{equation*}
	T\otimes \Ind_H^G(U)\approx \{\Ind_{tH\tmu }^{\cB}(T|_{\cB_{tH\tmu}}\otimes {}_tU) \}_{t\in G}.
	\end{equation*}
\end{theorem}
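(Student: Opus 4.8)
The plan is to prove the weak equivalence by establishing the two weak containments separately, working throughout with integrated forms as representations of $C^*(\cB)$ and using \cite[VIII 21]{FlDr88} to pass between weak containment of bundle representations and of their integrated forms. Since weak containment of a representation in a family is equivalent to domination of the associated seminorm on $C^*(\cB),$ it suffices to prove, for every $f\in C^*(\cB)$ (and by density it is enough to check $f\in C_c(\cB)$),
\[
\bigl\| [\,T\intform{\otimes}\Ind_H^G(U)\,]_f \bigr\| \;=\; \sup_{t\in G}\,\bigl\| \intform{\Ind}_{tH\tmu}^{\cB}(T|_{\cB_{tH\tmu}}\otimes {}_tU)_f \bigr\|.
\]
The inequality ``$\geq$'' records the weak containment of each family member in $T\otimes\Ind_H^G(U),$ and ``$\leq$'' records the weak containment of $T\otimes\Ind_H^G(U)$ in the family; the latter is the substantive direction.

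For ``$\geq$'' I would first move the conjugation to the group side: by invariance of induced group representations under conjugation of the inducing subgroup (\cite[XI 12.21]{FlDr88}, as in Remark \ref{rem: CHGB constant by conjutation}), $\Ind_H^G(U)\cong\Ind_{tH\tmu}^G({}_tU)$ for each $t,$ so it is enough to establish the single-subgroup, one-sided containment $\Ind_K^\cB(T|_{\cB_K}\otimes W)\preceq T\otimes\Ind_K^G(W)$ for $K=tH\tmu$ and $W={}_tU.$ Realizing the left-hand side via Proposition \ref{prop:construction of general reg rep} as $\regrep{K\cB}\otimes_{\intform{V}}1$ with $V=T|_{\cB_K}\otimes W,$ its diagonal matrix coefficient
\[
\langle \intform{\Ind}_K^\cB(V)_f(g\otimes_V(\xi\otimes y)),\, g\otimes_V(\xi\otimes y)\rangle = \int_K \Delta_K^G(s)^{1/2}\langle T_{(g^**f*g)(s)}\xi,\xi\rangle\langle W_s y,y\rangle\, ds
\]
is, after choosing an induced-space vector for $\Ind_K^G(W)$ built from $g,$ a matrix coefficient of $T\otimes\Ind_K^G(W)$ localized at the base coset; this is exactly the single-coset specialization of the computation below.

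The core is ``$\leq$'', the moving-around argument. Here I would fix $f\in C_c(\cB)$ and a vector $\xi\otimes w$ in the space of $T\otimes\Ind_H^G(U),$ with $w$ a finite sum of elementary induced tensors $\phi\otimes_U y,$ $\phi\in C_c(G),$ and estimate $\|[T\intform{\otimes}\Ind_H^G(U)]_f(\xi\otimes w)\|$ by decomposing $w$ along a partition of unity on $G/H$ subordinate to a cover by small coset-neighborhoods. On a neighborhood of the coset $tH,$ the change of variables $s\mapsto t s\tmu$ turns the integral over $H$ defining the induced inner product into an integral over $tH\tmu$ in which $U$ is replaced by the conjugate ${}_tU$ --- this is precisely where the subgroups $tH\tmu$ and the representations ${}_tU$ are forced to appear --- and the localized contribution is bounded by the norm of $\intform{\Ind}_{tH\tmu}^\cB(T|_{\cB_{tH\tmu}}\otimes{}_tU)_f$ on a corresponding vector.

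The main obstacle will be controlling the cross terms created by the localization: the decomposition of $w$ over distinct cosets is not orthogonal after $f$ acts, so the modular factors $\Delta_H^G$ and the conjugation bookkeeping must be tracked carefully in order to absorb the off-diagonal contributions (by refining the cover, invoking continuity of the induction process in the regional topology \cite[XI 12.4]{FlDr88}, and using non-degeneracy of $T$) into an arbitrarily small error. The delicate point is to make this approximation uniform for $f$ in the unit ball, so that it yields an inequality of seminorms rather than a mere pointwise estimate on a dense subalgebra; granting this, the two directions combine to give the asserted weak equivalence.
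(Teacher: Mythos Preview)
Your outline has the right target---the norm identity on $C^*(\cB)$---but both halves of the argument, as written, have real gaps, and the route you sketch for the substantive direction is quite different from the one the paper takes.

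For ``$\geq$'' your matrix-coefficient sketch does not close. With $V=T|_{\cB_K}\otimes W$ and $g\in C_c(\cB)$ the diagonal coefficient of $\intform{\Ind}_K^\cB(V)$ at the vector $g\otimes_V(\xi\otimes y)$ is
\[
\int_K \Delta_K^G(s)^{1/2}\,\langle T_{(g^**f*g)(s)}\xi,\xi\rangle\,\langle W_s y,y\rangle\, ds,
\]
which involves $g^**f*g$ with $g\in C_c(\cB)$, not $f$ alone. There is no evident way to manufacture $\xi'\in X$ and $\eta\in Y^{p_K}_W$ so that $\langle [T\intform{\otimes}\Ind_K^G(W)]_f(\xi'\otimes\eta),\xi'\otimes\eta\rangle$ reproduces this for all $f$; the ``vector built from $g$'' you allude to would have to live in $C_c(G)$, and in a non-saturated bundle one cannot trade $g\in C_c(\cB)$ for scalar-valued data. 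This is exactly the direction that already fails for a single $t$ in the example of \S\ref{ssec:example}, so it cannot be obtained by a purely coefficient-matching argument at one coset.

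For ``$\leq$'' you correctly identify the obstacle---cross terms between pieces of a partition of unity on $G/H$, and the need for a bound uniform in $f$---but you provide no mechanism to resolve it. Localizing $w$ near $tH$ and changing variables does not by itself produce a vector in the space of $\Ind_{tH\tmu}^\cB(T|_{\cB_{tH\tmu}}\otimes{}_tU)$ on which the norm of $\intform{\Ind}_{tH\tmu}^\cB(\cdots)_f$ controls the localized piece; what one needs is an honest isometry, not an approximation.

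The paper avoids all of this. It amplifies $T\otimes\Ind_H^G(U)$ to the direct sum of $\#G$ copies, $T\otimes^G V$ acting on $\ell^2(G)\otimes X\otimes Y^p_U$, and constructs an explicit linear isometry
\[
I\colon \bigoplus_{t\in G}(X\otimes Y)^{p_t}_{T|_{\cB_{tH\tmu}}\otimes{}_tU}\longrightarrow \ell^2(G)\otimes X\otimes Y^p_U
\]
that \emph{intertwines} $S:=\bigoplus_{t\in G}\Ind_{tH\tmu}^\cB(T|_{\cB_{tH\tmu}}\otimes{}_tU)$ and $T\otimes^G V$. The ``moving around'' is \emph{discrete}: the left regular representation of the discrete $G$ on $\ell^2(G)$ commutes with $T\otimes^G V$ and carries the range of $I$ onto a total set. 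From $[T\intform{\otimes}^G V]_f\circ I = I\circ \intform{S}_f$ and this density one gets $[T\intform{\otimes}^G V]_f=0\Leftrightarrow \intform{S}_f=0$, hence an injective $*$-homomorphism between the two image C*-algebras, which is automatically isometric. Both weak containments fall out simultaneously---no partitions of unity, no cross terms, no uniformity issues.
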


\begin{proof}
	For convenience we introduce the following notation 
	\begin{align*}
	{}_tH&\equiv tH\tmu & {}_{t|}T & \equiv T|_{\cB_{tH\tmu}} & V&\equiv \Ind_H^G(U) & p_t&:=p_{tH\tmu}.
	\end{align*}
	
	The Hilbert space induced by $U$ is $Y^p_U$ and we regard $X\otimes^G Y^p_U:=\ell^2(G)\otimes X\otimes Y^p_U$ as an $\ell^2-$direct sum of $\# G$ copies of $X\otimes Y^p_U$.
	Similarly, the direct sum of $\#G$ copies of $T\otimes V$ is 
	\begin{align*}
	T\otimes^G V &\colon \cB\to \bB(X\otimes^G Y^p_U) & b&\mapsto 1_{\ell^2(G)}\otimes [T\otimes V]_b,
	\end{align*}
	which is the composition of $T\otimes V$ with the faithful *-homomorphism
	\begin{equation*}
	\Theta\colon \bB(X\otimes Y^p_U)\to \bB(X\otimes^G Y^p_U),\ \Theta(R)=1_{\ell^2(G)}\otimes R. 
	\end{equation*}

	If we denote $T\intform{\otimes}V$ and $T\intform{\otimes}^G V$ the integrated forms of $T\otimes V$ and $T\otimes^G V,$ respectively, then $\Theta\circ (T\intform{\otimes} V)=T\intform{\otimes}^GV.$
	Indeed, for all $f\in C_c(\cB)$ and every elementary tensor $g\otimes \xi\otimes \eta \in \ell^2(G)\otimes X\otimes Y^p_U$ we have
	\begin{align*}
	[T\intform{\otimes}^GV]_f(g\otimes \xi\otimes \eta) & = \int_G g\otimes [T\otimes V]_{f(t)}(\xi\otimes \eta)\, dt  
	=g\otimes \left([T\intform{\otimes}V]_f (\xi\otimes\eta)\right)\\
	& = \Theta\circ [T\intform{\otimes} V]_f(g\otimes \xi\otimes \eta).
	\end{align*}

	Since $\Theta$ is an isometry and both $T\otimes V$ and all the members of $\{\Ind_{{}_t H}^{\cB}({}_{t|}T\otimes {}_tU)\}_{t\in G}$ are non degenerate, we can use \cite[XI 8.20]{FlDr88} to the deduce it suffices to show that 
	\begin{equation}\label{equ:equivalent condition of thesis for absorption principle I}
	\|[T\intform{\otimes}^G V]_f  \|= \sup\{ \|\intform{\Ind}_{{}_t H}^{\cB}({}_{t|}T\otimes {}_tU)_f\|\colon t\in G \}\qquad \forall\ f\in C_c(\cB).
	\end{equation}
	
	The $\ell^2-$direct sum 
	\begin{equation}\label{equ:direct sum of induced}
	S:=\bigoplus_{t\in G} \Ind_{{}_tH}^\cB({}_{t|}T\otimes {}_tU)
	\end{equation}
	is a non degenerate representation whose integrated form is $\intform{S}= \oplus_{t\in G} \intform{\Ind}_{{}_tH}^\cB({}_{t|}T\otimes {}_tU)$ and $\|\intform{S}_f\|=\sup\{ \|\intform{\Ind}_{{}_t H}^{\cB}({}_{t|}T\otimes {}_tU)_f\|\colon t\in G \}.$
	Thus $S$ is weakly equivalent to $T\otimes^G V$ if and only if \eqref{equ:equivalent condition of thesis for absorption principle I} holds.
	
	To compare $T\otimes^G V$ and $S$  we will construct a linear isometry between their domains, 
	\begin{equation}\label{equ:domain and range of I}
	I\colon \bigoplus_{t\in G} (X\otimes Y)^{p_t}_{{}_{t|}T\otimes {}_tU} \to X\otimes^G Y^p_U,
	\end{equation}
	that intertwines $S$ and $T\otimes^GV.$
	To state the properties defining $I$ we need to prove the existence of a (unique) linear function 
	\begin{equation}\label{equ:map L}
	L\colon C_c(G,X\otimes^G Y )\to X\otimes^G   Y^p_U
	\end{equation}
	which is continuous in the inductive limit topology and satisfies
	\begin{equation}\label{equ:defining condition for L}
	L( f\odot [\delta_t\otimes \xi\otimes \eta ]) =\delta_t\otimes ( \xi \otimes [f\otimes_{U} \eta])
	\end{equation}
	for all $f\in C_c(G),$ $\xi\in X,$ $\eta\in Y$ and $t\in G,$ where we regard the algebraic tensor product $C_c(G)\odot (X\otimes^G Y)$ as a subspace of $C_c(G,X\otimes^G Y)$ in the usual way ($f\odot u$ is the function $t\mapsto f(t)u$).
	
	Uniqueness of $L$ follows from the fact that the functions of the form $f\odot (\delta_t\otimes \xi\otimes \eta )$ span a subset of $C_c(G,X\otimes^G Y)$ which is dense in the inductive limit topology~\cite[II 14.6]{FlDr88}.
	To prove the existence of $L$ we take $u,v\in C_c(G,X\otimes^G Y)$ such that $u = f\odot (\delta_r\otimes \xi\otimes \eta )$ and $v=g\odot (\delta_s\otimes \zeta\otimes \kappa)$ for some  $f,g\in C_c(G),$ $r,s\in G,$ $\xi,\zeta\in X$ and $\eta,\kappa\in Y.$
	The inner product of the candidates for $L(u)$ and $L(v)$ is
	\begin{align}
	\langle & \delta_r\otimes \xi \otimes [f\otimes_{U} \eta],\delta_s\otimes \zeta \otimes [g \otimes_{U} \kappa]\rangle \nonumber\\
	& =  \langle \delta_r,\delta_s\rangle\langle \xi,\zeta\rangle \langle f\otimes_{U} \eta,g \otimes_{U} \kappa\rangle \nonumber \\
	& = \langle \delta_r,\delta_s\rangle \langle \xi,\zeta\rangle \int_{H} \int_G \Delta_H^G(t)^{1/2} \overline{f(z)} g(zt) \langle \eta, U_t  \kappa\rangle \,dzdt \nonumber \\
	& = \int_{H}  \int_G \Delta_H^G(t)^{1/2} \overline{f(z)} g(zt) \langle \delta_r \otimes \xi\otimes\eta, (1\otimes 1\otimes U_t)(\delta_s\otimes \zeta\otimes \kappa )\rangle \,dzdt\nonumber \\
	&= \int_{H}  \int_G \Delta_H^G(t)^{1/2} \langle u(z), (1\otimes 1\otimes U_t)v(zt)\rangle \,dzdt.\label{equ:basic inner product identity to construct L}
	\end{align}
	
	Fix a compact $D\subset G$ and denote $C_D(G,X\otimes^G Y)$ the set formed by those $f\in C_c(G,X\otimes^G Y)$ with $\supp(f)\subset D.$
	This vector space is a Banach space with the norm $\|\ \|_\infty.$
	Let $C_D^\odot$ be the subspace of $C_D(G,X\otimes^G Y)$ spanned by the functions of the form $f\odot (\delta_t\otimes \xi\otimes \eta)$ with $f\in C_D(G),$ $t\in G,$ $\xi\in X$ and $\eta\in Y.$
	We clearly have $C_c(G)C_D^\odot \subset C_D^\odot.$
	If for each $t\in G$ we define $C_D^\odot(t)$ as the closure of $\{u(t)\colon u\in C_D^\odot\},$ then $C_D^\odot(t)=X\otimes^G Y$ for every $t$ in the interior of $D$ and $\{0\}$ otherwise.
	By \cite[Lemma 5.1]{Ab03}, the closure of $C_D^\odot$ in $C_c(G,X\otimes^G Y)$ with respect to the inductive limit topology is $\{f\in C_c(G,X\otimes^G Y)\colon f(t)\in C_D^\odot(t)\, \forall\, t\in G  \}=C_D(G,X\otimes^G Y).$
	
	Take any $u,v\in C_D(G,X\otimes^G Y).$
	By the preceding paragraph there are sequences $\{u_n\}_{n\in \bN}$ and $\{v_n\}_{n\in \bN}$ in $C_D^\odot$ converging uniformly to $u$ and $v,$ respectively.
	Then for all $n\in \bN$ there exists a positive integer $m_n$ and (for each $j=1,\ldots,m_n$) elements $f_{n,j},g_{n,j}\in C_D(G),$ $r_{n,j},s_{n,j}\in G,$ $\xi_{n,j},\zeta_{n,j}\in X$ and $\zeta_{n,j},\kappa_{n,j}\in Y$ such that
	\begin{align*}
	u_n &=\sum_{j=1}^{m_n} f_{n,j}\odot (\delta_{r_{n,j}}\otimes \xi_{n,j}\otimes \eta_{n,j} ) & v_n&=\sum_{j=1}^{m_n}g_{n,j}\odot (\delta_{s_{n,j}}\otimes \zeta_{n,j}\otimes \kappa_{n,j}).
	\end{align*}
	
	By \eqref{equ:basic inner product identity to construct L}, for all $a,b\in \bN$ we have
	\begin{align*}
	\|\sum_{j=1}^{m_a} \delta_{r_{a,j}} & \otimes (\xi_{a,j}\otimes [f_{a,j}\otimes_{U} \eta_{a,j}] )-\sum_{k=1}^{m_b}\delta_{s_{b,k}}\otimes ( \zeta_{b,k}\otimes [g_{b,k}\otimes_{U} \kappa_{b,k}] )\|^2 =\\
	& = | \int_{H}  \int_G \Delta_H^G(t)^{1/2} \langle (u_a-v_b)(z), (1\otimes 1\otimes U_t)(u_a-v_b)(zt)\rangle \,dzdt |.
	\end{align*}
	For the inner product inside the integral above to be non zero we must have $z,tz\in D,$ which implies $t= tz z^{-1}\in H\cap (DD^{-1}).$
	If $\alpha_D$ and $\beta_D$ are the measures of $D$ and $H\cap (D^{-1}D)$ with respect to the Haar measures of $G$ and $H,$ respectively, and $\gamma_D:=\sup\{ \Delta_H^G(t)^{1/2} \colon t\in H\cap (D^{-1}D)\},$ then 
	\begin{equation*}
	\|\sum_{j=1}^{m_a} \delta_{r_{a,j}} \otimes (\xi_{a,j}\otimes [f_{a,j}\otimes_{U} \eta_{a,j}] )-\sum_{k=1}^{m_b}\delta_{s_{b,k}}\otimes ( \zeta_{b,k}\otimes [g_{b,k}\otimes_{U} \kappa_{b,k}] )\|^2
	\end{equation*}
	is not greater than $\|u_a-v_b\|_\infty^2 \alpha_D\beta_D\gamma_D.$
	
	Several conclusion arise from that bound:
	\begin{enumerate}
		\item If we take $u=v$ and $v_b=u_b,$ it follows that $\{\sum_{j=1}^{m_a} \delta_{r_{a,j}} \otimes (\xi_{a,j}\otimes [f_{a,j}\otimes_{U} \eta_{a,j}]) \}_{a\in \bN}$ is a Cauchy sequence in $X\otimes^G Y^p_U;$ the limit of which we denote $L_D(\{u_n\}_{n\in \bN}).$
		\item If $u=v$ and we take limit in $a$ and $b,$ we get $L_D(\{u_n\}_{n\in \bN})= L_D(\{v_n\}_{n\in \bN}).$
		Thus we may define a function $L_D\colon C_D(G,X\otimes^G Y)\to X\otimes^G Y^p_U,\ u\mapsto L_D(u):=L_D(\{u_n\}_{n\in \bN}).$
		\item Taking limit in $a$ and $b$ we obtain $\| L_D(u)-L_D(v) \|\leq\|u-v\|_{\infty} \sqrt{\alpha_D\beta_D\gamma_D},$ so $L_D$ is continuous.
		\item $L_D(f\odot (\delta_t\otimes \xi\otimes\eta))=\delta_t\otimes \xi\otimes (f\otimes_U\eta)$ and $L_D$ is linear when restricted to $C_D^\odot.$
		Thus $L_D$ is linear.
		\item By~\eqref{equ:basic inner product identity to construct L} and the continuity of $L_D,$
		\begin{equation}\label{equ:L and inner product}
		\langle L_D(u),L_D(v)\rangle = \int_G \int_H \Delta_H^G(t)^{1/2}\langle u(s), (1\otimes 1 \otimes U_t)v(st)\rangle \,dt ds .
		\end{equation}
	\end{enumerate}
	
	It follows immediately that $L_E$ is an extension of $L_D$ whenever $E\subset G$ is a compact set containing $D.$
	Then there exists a unique function $L\colon C_c(G,X\otimes^G Y)\to X\otimes^G Y^p_U$ extending all the $L_D$'s.
	This extension is linear and continuous in the inductive limit topology by \cite[II 14.3]{FlDr88}.
	Note also that $L$ satisfies~\eqref{equ:defining condition for L} and, consequently, it has dense range.
	
	Now that we know $L$ exists we are one step closer of being able to specify the properties defining the map $I$ of \eqref{equ:domain and range of I}.
	Given $r\in G,$ $f\in C_c(\cB),$ $\xi\in X$ and $\eta\in Y$ we define $[r,f,\xi,\eta]\in C_c(G,X\otimes^G Y)$ by
	\begin{equation}\label{equ:right brakets for triple tensors}
	[r,f,\xi,\eta](s)= \Delta_G(r)^{-1/2} \delta_r\otimes T_{f(s\rmu)}\xi\otimes \eta.
	\end{equation}
	We claim there exists a unique linear and continuous map $I$ with the domains and ranges specified in~\eqref{equ:domain and range of I} and such that for all $t\in G,$ $f\in C_c(\cB),$ $\xi\in X$ and $\eta\in Y,$
	\begin{equation}\label{equ:defining property for I}
	I( f\otimes_{{}_{t|}T\otimes {}_tU} (\xi\otimes \eta) ) = L([t,f,\xi,\eta]).
	\end{equation}
	To prove this claim we start by taking elementary tensors $f\otimes_{{}_{r|}T\otimes {}_r U}(\xi\otimes \eta)$ and $g\otimes_{{}_{t|}T\otimes {}_tU}(\zeta\otimes\kappa)$ on (possibly equal) direct summands of $\bigoplus_{s\in G}  (X\otimes Y)^{p_s}_{{}_{s|}T\otimes {}_sU}.$
	Notice the $r$ and the $t$ in the subindexes indicate the direct summands the tensorss belongs to. 	
	By~\eqref{equ:L and inner product} we have
	\begin{align*}
	\langle & L([r,f,\xi,\eta]),L([t,g,\zeta,\kappa])\rangle= \\
	& = \int_G\int_H \Delta_H^G(w)^{1/2}\Delta_G(rt)^{-1/2} \langle \delta_r\otimes T_{f(z\rmu )}\xi \otimes \eta ,\delta_t\otimes T_{g(zw\tmu)}\zeta\otimes U_w\kappa\rangle  \,dwdz \\
	& = \int_H\int_G \Delta_H^G(w)^{1/2} \underbrace{\Delta_G(rt)^{-1/2}\langle \delta_r,\delta_t\rangle}_{=\Delta_G(\rmu)\langle \delta_r,\delta_t\rangle} \langle T_{f(z\rmu )}\xi \otimes \eta ,T_{g(zw\rmu)}\zeta\otimes U_w\kappa\rangle  \,dzdw \\
	& = \int_H\int_G \Delta_H^G(w)^{1/2} \langle \delta_r,\delta_t\rangle \langle T_{f(z)}\xi \otimes \eta ,T_{g(zr w\rmu)}\zeta\otimes U_w\kappa\rangle  \,dzdw\\
	&= \int_H\int_G \Delta_{{}_rH}^G(rw\rmu)^{1/2} \langle \delta_r,\delta_t\rangle \langle \xi \otimes \eta ,T_{f(z)^*g(zr w\rmu )}\zeta\otimes {}_r U_{rw\rmu}\kappa\rangle  \,dzdw \\
	&= \int_{{}_rH}\int_G \Delta_{{}_rH}^G(w)^{1/2} \langle \delta_r,\delta_t\rangle \langle \xi \otimes \eta ,T_{f(z)^*g(zw)}\zeta\otimes {}_r U_{w}\kappa\rangle  \,dzdw \\
	&= \langle \delta_r,\delta_t\rangle \langle \xi \otimes \eta ,[{}_{r|}T\intform{\otimes} {}_r U]_{p_r(f^**g)}(\zeta\otimes \kappa)\rangle  \\
	&= \langle f\otimes_{{}_{r|}T\otimes {}_r U}(\xi\otimes \eta),g\otimes_{{}_{t|}T\otimes {}_tU}(\zeta\otimes\kappa)\rangle.
	\end{align*}
	The existence of $I$ then follows immediately from remark~\ref{thm:Fells absorption principle I} if one considers the functions
	\begin{align*}
	b&\colon G\times C_c(\cB)\times X\times Y\to \bigoplus_{t\in G}(X\otimes Y)^{p_t}_{{}_{t|}T\otimes{}_tU} & b(r,f,\xi,\eta)&=f\otimes_{{}_{r|}T\otimes {}_r U}(\xi\otimes \eta)\\
	c& \colon G\times C_c(\cB)\times X\times Y\to X\otimes^G Y^p_U & c(r,f,\xi,\eta)&=L([r,f,\xi,\eta]).
	\end{align*}
	
	We are not sure if $I$ is surjective, but we can use the ideas of~\cite{ExNg} to ``move'' the image of $I$ to ``fill'' $X\otimes^G Y.$
	The movement is via the map
	\begin{align*}
	\rho & \colon G\to \bB(X\otimes^G Y^p_U) & \rho_t &:= \lt_t\otimes 1_{X}\otimes 1_{Y^p_U},
	\end{align*}
	where $\lt\colon G\to \bB(\ell^2(G))$ is the left regular representation of the discrete version of $G$ ($\lt_t(\delta_s)=\delta_{ts}$).
	Note $\rho$ and $\Theta$ have commuting ranges, so the range of $\rho$ commutes with that of $T\intform{\otimes}^GV=\Theta\circ (T\intform{\otimes} G).$ 	
	We remark that the continuity of $\rho$ (which may fail) plays no r\^ole in the proof.
	
	Let $K$ be the image of $I.$
	We claim that $ G\cdot K:= \spn\{\rho_t K\colon t\in G\}$
	is dense in $X\otimes^G Y^p_U.$
	To prove this we define, for each $t\in G,$ the function 
	\begin{align*}
	\mu_t \colon &  C_c(G,X\otimes^G Y)\to C_c(G,X\otimes^G Y) & (\mu_t f)(z) &=(\lt_t\otimes 1_{X}\otimes 1_{Y})f(z). 
	\end{align*}
	In particular, $\mu_t (f\odot (\delta_r\otimes \xi\otimes \eta))=f\odot (\delta_{tr}\otimes \xi\otimes \eta).$
	Hence,
	\begin{align*}
	L\circ \mu_t (f\odot (\delta_r\otimes \xi\otimes \eta)) 
	& =	L(f \odot (\delta_{tr}\otimes \xi\otimes \eta))
	=\delta_{tr}\otimes \xi \otimes [f\otimes_{U} \eta] \\
	& =\rho_t  (\delta_{r}\otimes \xi \otimes [f\otimes_{U} \eta])
	=\rho_t L(f\odot( \delta_r\otimes \xi\otimes \eta ))
	\end{align*}
	and we get $L\circ \mu_t =\rho_t \circ L$ because both $L\circ \mu_t $ and $\rho_t \circ L$ are linear and continuous in the inductive limit topology and agree on a dense set. 
	Thus $\overline{G\cdot K}$ contains the image through  $L$ of
	\begin{equation*}
	K_0:= \spn\{ \mu_t [r,f,\xi,\eta]\colon r,t\in G,\, f\in C_c(\cB),\, \xi\in X,\, \eta\in Y  \}\subset C_c(G,X\otimes^G Y).
	\end{equation*}
	
	Note $C(G)K_0\subset K_0.$
	Besides, 
	\begin{equation}\label{equ:mu []}
	\mu_r [t,f,\xi,\eta](z)
	=\Delta_G(t)^{-1/2}\delta_{rt}\otimes T_{f(z\tmu)} \xi\otimes \eta.
	\end{equation}
	Fixing $z\in G$ and varying $r,t\in G,$ $\xi\in X,$ $\eta\in Y$ and $f\in C_c(\cB),$ the elements we obtain on the right hand side of \eqref{equ:mu []} are all those of the form $\delta_{s}\otimes T_b \xi\otimes \eta,$ for arbitrary $s\in G,$ $b\in \cB,$ $\xi\in X$ and $\eta\in Y.$
	The closed linear span of this vectors is  $X\otimes^G Y$ because $T$ is non degenerate, and we conclude (using~\cite[II 14.3]{FlDr88}) that $K_0$ is dense in $C_c(G,X\otimes^G Y)$ in the inductive limit topology.
	Hence, $\overline{G\cdot K}$ contains the dense set $L(K_0)$ and it follows that $\overline{G\cdot K}=X\otimes^G Y^p_U.$
	
	Our next goal is to show that $I$ intertwines the $S$ of \eqref{equ:direct sum of induced} and $T\otimes^G V.$
	To prove this claim we fix $r,s,t\in G,$ $b\in B_r,$ $f\in C_c(\cB),$ $g\in C_c(G),$ $\xi,\zeta\in X$ and $\eta, \kappa\in Y.$
	For convenience we denote $u$ and $v$ the tensors $f\otimes_{{}_{s|}T\otimes {}_{s}U} (\xi\otimes \eta) $ and $g\odot (\delta_t\otimes \zeta\otimes \kappa),$ respectively.
	Recalling~\eqref{equ:L and inner product} we get
	\begin{align*}
	\langle & [T\otimes^G V]_b  \circ I (u ),L(v ) \rangle
	= \langle  L([s,f,\xi,\eta]),[T\otimes^G V]_{b^*}(\delta_t\otimes \zeta\otimes (g\otimes_{U}\kappa)) \rangle \nonumber\\
	&= \langle  L([s,f,\xi,\eta]),\delta_t\otimes T_{b^*}\zeta\otimes ( \rmu g\otimes_{U}\kappa ) \rangle \nonumber\\
	&= \langle  L([s,f,\xi,\eta]),L(\rmu g\odot( \delta_t \otimes T_{b^*}\zeta\otimes \kappa))\rangle\\
	& =\int_H\int_G  \Delta_H^G(w)^{1/2}  \langle [s,f,\xi,\eta](z),\delta_t \otimes T_{b^*}\zeta\otimes U_w\kappa \rangle g(r zw)\,dzdw\\
	& =\int_H\int_G  \Delta_H^G(w)^{1/2} \Delta_G(s)^{-1/2} \langle \delta_s\otimes T_{f(zs^{-1})}\xi\otimes \eta,\delta_t \otimes T_{b^*}\zeta\otimes U_w\kappa \rangle g(r zw)\,dzdw\\
	& =\int_H\int_G  \Delta_H^G(w)^{1/2} \Delta_G(s)^{-1/2}\langle \delta_s,\delta_t\rangle \langle T_{bf(\rmu zs^{-1})}\xi\otimes \eta,\zeta\otimes U_w\kappa \rangle g(zw)\,dzdw\\
	& =\int_H\int_G  \Delta_H^G(w)^{1/2} \Delta_G(s)^{-1/2}\langle \delta_s,\delta_t\rangle \langle T_{(bf)(zs^{-1})}\xi\otimes \eta,\zeta\otimes U_w\kappa \rangle g(zw)\,dzdw\\
	& =\langle L([s,bf,\xi,\eta]),L(g\odot (\delta_t\otimes \zeta\otimes \kappa))\rangle
	=\langle I(bf\otimes_{{}_{|s}T\otimes {}_s U}(\xi\otimes \eta)),L(v)\rangle\\
	&= \langle I\circ S_b(u),L(v)\rangle.
	\end{align*}
	Since $u$ and $v$ are arbitrary basic tensors and $L$ has dense range, by linearity and continuity we get that $[T\otimes^G V]_b \circ I = I\circ S_b;$ which implies that for all $f\in C_c(\cB)$
	\begin{equation}\label{equ:T,V,I,R}
	[T\intform{\otimes}^G V]_f \circ I = I\circ \intform{S}_f.
	\end{equation}
	The same identity holds for all $f\in C^*(\cB)$ because $C_c(\cB)$ is dense in $C^*(\cB).$
	
	Recall that  $G\cdot K$ spans a dense subset of $\ell^2(G)\otimes X\otimes Y^p_U, $ thus for all $g\in C^*(\cB) $ we have  
	\begin{align*}
	[T\intform{\otimes}^G V]_g=0
	& \Leftrightarrow \ [T\intform{\otimes}^G V]_g\rho_t \circ I=0\ \forall \ t\in G
	\Leftrightarrow \ \rho_t [T\intform{\otimes}^G V]_g\circ I=0\ \forall \ t\in G\\
	& \Leftrightarrow \ [T\intform{\otimes}^G V]_g\circ I=0
	\Leftrightarrow \ I\circ \intform{S}_g = 0
	\Leftrightarrow \intform{S}_g = 0,
	\end{align*}
	where the last equivalence follows from the fact that $I$ is an isometry.
	We may then define a *-homomorphism $\Omega \colon (T\intform{\otimes}^G V)(C^*(\cB))\to \intform{S}(C^*(\cB))$ by $\Omega\left([T\intform{\otimes}^G V]_g\right)=\intform{S}_g.$
	
	The thesis of the theorem was shown to be the equivalent to \eqref{equ:equivalent condition of thesis for absorption principle I} which, in turn, is equivalent to say $\Omega $ is an isometry.
	But $\Omega$ is indeed an isometry because it is an injective *-homomorphism between two C*-algebras.
\end{proof}

The first application of theorem \ref{thm:Fells absorption principle I} is the comparison of the C*-algebras $C^*_{H\uparrow G}(\cB)$ and $C^*_{H\uparrow\cB}(\cB)$ when the normalizer of $H$ is open in $G.$
This covers the cases examined in the example of section \ref{ssec:example}, where there is always a quotient map $\Theta^H_\cB\colon C^*_{H\uparrow G}(\cB)\to C^*_{H\uparrow \cB}(\cB)$ that it is not always injective.

\begin{corollary}\label{cor: maps Theta}
	Given a Fell bundle $\cB=\{B_t\}_{t\in G}$ and $H\leqslant G$ with open normalizer, there exists a *-homomorphism $\Theta^H_\cB\colon C^*_{H\uparrow G}(\cB)\to C^*_{H\uparrow \cB}(\cB)$ such that $\Theta^{\cB}_H\circ q^{H\uparrow G}_\cB=q^{H\uparrow \cB}.$
	If for all $r\in G$ we identify $C^*_{H\uparrow G}(\cB)=C^*_{rH\rmu\uparrow G}(\cB)$ as indicated in remark \ref{rem: CHGB constant by conjutation}, then $\cap_{r\in G} \ker(\Theta^{rH\rmu}_\cB)=\{0\}.$
	Moreover, $\Theta^H_\cB$ is an isomorphism if either $\cB$ is saturated or $H$ normal.
\end{corollary}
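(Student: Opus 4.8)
The plan is to reduce the whole statement to comparisons of the two universal norms
$\|q^{H\uparrow\cB}(f)\|$ and $\|q^{H\uparrow G}_\cB(f)\|$, and then read off each assertion from the absorption principle of Theorem~\ref{thm:Fells absorption principle I} together with the universal properties of Propositions~\ref{prop: CHBB universal property} and \ref{prop: CHGB universal property}. First I would build $\Theta^H_\cB$ by proving $\|q^{H\uparrow\cB}(f)\|\le\|q^{H\uparrow G}_\cB(f)\|$ for all $f\in C^*(\cB)$, since a factorization $\Theta^H_\cB\circ q^{H\uparrow G}_\cB=q^{H\uparrow\cB}$ exists exactly when $\ker q^{H\uparrow G}_\cB\subset\ker q^{H\uparrow\cB}$, i.e. exactly when that norm inequality holds. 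To get it, fix a non-degenerate $T\colon\cB\to\bB(X)$ with faithful integrated form; because the normalizer of $H$ is open, Lemma~\ref{lem:wc induction and restriction}(2) makes $\intform{T|_{\cB_H}}$ faithful, so Proposition~\ref{prop: CHBB universal property} gives $\|q^{H\uparrow\cB}(f)\|=\|\intform{\Ind}_H^\cB(T|_{\cB_H})_f\|$. Applying Theorem~\ref{thm:Fells absorption principle I} with $U=\kappa$ the trivial one-dimensional representation of $H$ (whose conjugates ${}_t\kappa$ are again trivial), the $t=e$ summand of the family is $\Ind_H^\cB(T|_{\cB_H}\otimes\kappa)\cong\Ind_H^\cB(T|_{\cB_H})$, and the norm identity inside that proof reads $\|[T\intform{\otimes}\Ind_H^G(\kappa)]_f\|=\sup_t\|\intform{\Ind}_{tH\tmu}^\cB(T|_{\cB_{tH\tmu}}\otimes{}_t\kappa)_f\|$. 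Bounding the supremum below by its $t=e$ term and above by $\|q^{H\uparrow G}_\cB(f)\|$ (Proposition~\ref{prop: CHGB universal property}, since $T\intform{\otimes}\Ind_H^G(\kappa)$ factors through $q^{H\uparrow G}_\cB$) delivers the inequality.

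For $\bigcap_{r}\ker(\Theta^{rH\rmu}_\cB)=\{0\}$ I would take $T$ and $U$ both faithful and non-degenerate, so that Proposition~\ref{prop: CHGB universal property} turns $T\intform{\otimes}\Ind_H^G(U)$ into a computation of the norm of $q^{H\uparrow G}_\cB$; feeding this into Theorem~\ref{thm:Fells absorption principle I} gives $\|q^{H\uparrow G}_\cB(f)\|=\sup_t\|\intform{\Ind}_{tH\tmu}^\cB(T|_{\cB_{tH\tmu}}\otimes{}_tU)_f\|$. Each summand is a representation of $\cB$ induced from $\cB_{tH\tmu}$, hence dominated by $\|q^{tH\tmu\uparrow\cB}(f)\|$ via Proposition~\ref{prop: CHBB universal property}, so $\|q^{H\uparrow G}_\cB(f)\|\le\sup_t\|q^{tH\tmu\uparrow\cB}(f)\|$. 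The remaining step is to unwind Remark~\ref{rem: CHGB constant by conjutation}: writing $a=q^{H\uparrow G}_\cB(f)$ and composing $\chi_r$ (with $\chi_r\circ q^{H\uparrow G}_\cB=q^{rH\rmu\uparrow G}_\cB$) with $\Theta^{rH\rmu}_\cB\circ q^{rH\rmu\uparrow G}_\cB=q^{rH\rmu\uparrow\cB}$ shows that $a$ lying in $\ker\Theta^{rH\rmu}_\cB$ (under the identification) means precisely $q^{rH\rmu\uparrow\cB}(f)=0$. Thus $a\in\bigcap_r\ker\Theta^{rH\rmu}_\cB$ forces all $q^{rH\rmu\uparrow\cB}(f)=0$, the displayed supremum vanishes, and $a=0$.

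For the final ``Moreover'' I would in each case establish the reverse inequality $\|q^{H\uparrow G}_\cB(f)\|\le\|q^{H\uparrow\cB}(f)\|$, so that $\Theta^H_\cB$ is isometric and, being surjective (it factors the surjection $q^{H\uparrow\cB}$), an isomorphism. When $H$ is normal, every conjugate $tH\tmu$ equals $H$, so in Theorem~\ref{thm:Fells absorption principle I} each summand $\Ind_H^\cB(T|_{\cB_H}\otimes{}_tU)$ is induced from $\cB_H$ and hence bounded by $\|q^{H\uparrow\cB}(f)\|$; taking the supremum with $T,U$ faithful gives the inequality. When $\cB$ is saturated I would instead invoke Fell's absorption principle \eqref{equ:Fell abst prin}, which for faithful non-degenerate $T,U$ yields the unitary equivalence $T\otimes\Ind_H^G(U)\cong\Ind_H^\cB(T|_{\cB_H}\otimes U)$ directly, whence $\|q^{H\uparrow G}_\cB(f)\|=\|\intform{\Ind}_H^\cB(T|_{\cB_H}\otimes U)_f\|\le\|q^{H\uparrow\cB}(f)\|$ because the right-hand side is induced from $\cB_H$.

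The genuinely delicate point is not any individual estimate but keeping the three families of representations straight. The construction of $\Theta^H_\cB$ must use only the $t=e$ term of the conjugate family, which is exactly where openness of the normalizer enters through faithfulness of $T|_{\cB_H}$; the injectivity statement instead needs the full supremum over conjugates, matched term-by-term against the separate algebras $C^*_{tH\tmu\uparrow\cB}(\cB)$ through the identifications $\chi_r$. I expect this conjugation bookkeeping, together with the contrast between the ``collapse to $\cB_H$'' that occurs when $H$ is normal and the single-representation collapse supplied by saturation, to be the part most prone to error, rather than anything analytically hard.
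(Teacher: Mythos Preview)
Your proposal is correct and follows essentially the same approach as the paper's proof: both obtain $\Theta^H_\cB$ from the $t=e$ term of Theorem~\ref{thm:Fells absorption principle I} with $U=\kappa$ (using Lemma~\ref{lem:wc induction and restriction} for faithfulness of $T|_{\cB_H}$), and both read the intersection statement off the full sup-norm identity with faithful $T,U$. The only cosmetic difference is in the ``Moreover'': the paper dispatches the saturated case by noting $\Theta^H_\cB$ is the inverse of the $\pi^H_\cB$ already built in \eqref{equ: pi uparrow H}, and leaves the normal case implicit in the intersection claim (since $rH\rmu=H$ collapses $\bigcap_r\ker\Theta^{rH\rmu}_\cB$ to $\ker\Theta^H_\cB$), whereas you argue both cases directly from the absorption principle---the substance is identical.
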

\begin{proof}
	Let $\intform{T}\colon \cB\to \bB(X)$ be a faithful non degenerate representation  and $\kappa\colon H\to \bC$ the trivial representation.
	By propositions \ref{prop: CHBB universal property} and \ref{prop: CHGB universal property}, lemma \ref{lem:wc induction and restriction} and theorem \ref{thm:Fells absorption principle I}, for all $f\in C^*(\cB)$ we have
	\begin{equation*}
	\|q^{H\uparrow\cB}(f)\|=\| \Ind_H^\cB(T|_{\cB_H}\otimes \kappa)_f \|\preceq \|[T\otimes \Ind_H^G(\kappa)]_f\|\leq \|q^{H\uparrow G}_\cB(f)\|;
	\end{equation*}
	which implies the existence of $\Theta^H_\cB.$
	Uniqueness is a consequence of the fact that $q^{H\uparrow G}_\cB$ is surjective.
	In case $\cB$ is saturated, $\Theta^H_\cB$ is the inverse of the $\pi^H_\cB$ of \eqref{equ: pi uparrow H}.
	
	To prove the claim about the intersection of the kernels we take a faithful non degenerate representation $\intform{U}$ of $H.$
	The integrated form of all the restrictions $T|_{\cB_{rH\rmu}}$ and conjugations ${}_r U$ have faithful integrated forms, so for all $f\in C^*(\cB)$ we have
	\begin{align*}
	q^{H\uparrow G}_\cB(f)\in \bigcap_{r\in G} \ker(\Theta^{rH\rmu}_\cB) &
	\Leftrightarrow q^{rH\rmu \uparrow \cB}_\cB(f)\ \forall\ r\in G \nonumber \\
	&\Leftrightarrow \| \Ind_{rH\rmu}^\cB(T|_{\cB_{rH\rmu}}\otimes {}_r U)_f \|=0\ \forall\ r\in G\nonumber  \\
	& \Leftrightarrow \|[T\otimes \Ind_H^G(U)]_f\|=0 \nonumber\\
	& \Leftrightarrow q^{H\uparrow G}_\cB(f)=0,
	\end{align*}
	which completes the proof.
\end{proof}

In many parts of \cite{FlDr88} Fell gets a result for representations of Banach *-algebraic bundles out of the corresponding result for Fell bundles.
Such techniques yield the following.

\begin{corollary}\label{thm:Fells absorption principle II}
	Let $\cB=\{B_t\}_{t\in G}$ be a Banach *-algebraic bundle over a LCH group, $H\leqslant G,$ $T\colon \cB\to \bB(X)$ a non degenerate representation and $U\colon H\to \bB(Y)$ a unitary representation.
	If for each $t\in G$  we denote ${}_tU$ the conjugated representation $tH\tmu \to \bB(Y), \ r\mapsto U_{\tmu r t},$ then $\{T|_{\cB_{tH\tmu}}\otimes {}_tU\}_{t\in G}$ is a set of $\cB-$positive representations  and 
	\begin{equation}\label{equ:thesis Fell for banach algebraic bundles}
	T\otimes \Ind_H^G(U)\approx \{\Ind_{tH\tmu }^{\cB}(T|_{\cB_{tH\tmu}}\otimes {}_tU) \}_{t\in G}.
	\end{equation}
\end{corollary}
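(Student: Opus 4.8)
The plan is to dispose of the positivity assertion by a direct computation and then to reduce the weak-equivalence statement to the Fell-bundle case already proved in Theorem \ref{thm:Fells absorption principle I}, by passing to the Fell bundle associated to $\cB.$

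For the positivity claim, fix $t\in G$ and set $S:=T|_{\cB_{tH\tmu}}\otimes {}_tU.$ I would verify condition (2) of Corollary \ref{cor: weak form of characterization of positivity}. Given any $b\in \cB,$ the element $b^*b$ lies in the unit fibre $B_e,$ and since $e\in tH\tmu$ and ${}_tU_e=U_{\tmu e t}=U_e=1_Y,$ the tensor-product representation satisfies $S_{b^*b}=T_{b^*b}\otimes 1_Y.$ As $T$ is a $*$-representation, $T_{b^*b}=T_b^*T_b\geq 0,$ whence $S_{b^*b}\geq 0$ and $\langle S_{b^*b}\zeta,\zeta\rangle\geq 0$ for all $\zeta\in X\otimes Y.$ By Corollary \ref{cor: weak form of characterization of positivity} each $T|_{\cB_{tH\tmu}}\otimes {}_tU$ is $\cB$-positive, so the induced representations on the right-hand side of \eqref{equ:thesis Fell for banach algebraic bundles} are well defined.

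For the weak equivalence I would follow Fell's standard device of deducing a bundle statement from its Fell-bundle analogue. I form the Fell bundle $\cC=\{C_t\}_{t\in G}$ associated to $\cB,$ in which $C_t$ is the completion of $B_t$ modulo the null space of the universal C*-seminorm $b\mapsto \sup\{\|R_b\|\colon R\ \text{a representation of}\ \cB\}$ (this supremum is finite because representations are bounded by the bundle norm on each fibre). The resulting bundle $*$-homomorphism $\kappa\colon \cB\to \cC$ has fibrewise dense range and annihilates exactly the vectors killed by every representation; hence it induces a bijection $R\leftrightsquigarrow R'$ between the non-degenerate representations of $\cB$ and those of $\cC$ with $\intform{R}=\intform{R'}\circ \kappa_*$ for the induced $\kappa_*\colon C_c(\cB)\to C_c(\cC),$ and in particular an isomorphism $C^*(\cB)\cong C^*(\cC).$ Writing $T=T'\circ \kappa,$ the operations of restriction to $\cB_{tH\tmu},$ conjugation, and tensoring with the group representations $U,$ ${}_tU$ and $\Ind_H^G(U)$ are all transparent to $\kappa$ (the group factors being untouched). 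Consequently, under $C^*(\cB)=C^*(\cC),$ both sides of \eqref{equ:thesis Fell for banach algebraic bundles} have the same integrated forms as the corresponding objects built from $\cC.$ Since weak equivalence of representations of $\cB$ is by definition a statement about their integrated forms on $C^*(\cB),$ applying Theorem \ref{thm:Fells absorption principle I} to $\cC,$ to $T'$ and to $U$ delivers \eqref{equ:thesis Fell for banach algebraic bundles} for $\cB.$

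The one step requiring genuine verification, and the main obstacle, is the compatibility of the induction process with $\kappa,$ namely that $\intform{\Ind}_{tH\tmu}^\cB(S)$ and $\intform{\Ind}_{tH\tmu}^\cC(S')$ agree under $C^*(\cB)=C^*(\cC),$ where $S':=T'|_{\cC_{tH\tmu}}\otimes {}_tU.$ I would derive this from Remark \ref{rem:construction of isometry}: since $\kappa_*$ commutes with the involution, the convolution, and the map $p$ of \eqref{equ: p}, the identity $\langle \intform{S}_{p(g^**f)}\xi,\eta\rangle=\langle \intform{S'}_{p(\kappa_*g^**\kappa_*f)}\xi,\eta\rangle$ exhibits $f\otimes_S\xi\mapsto \kappa_*f\otimes_{S'}\xi$ as an inner-product-preserving map with dense range, hence a unitary between the two induced spaces, and a direct check on elementary tensors shows it intertwines $\Ind_{tH\tmu}^\cB(S)$ and $\Ind_{tH\tmu}^\cC(S').$ Granting this compatibility together with the routine but lengthy verification that $\cC$ is indeed a Fell bundle and that $\kappa$ has the stated properties, the corollary follows; the only content beyond Theorem \ref{thm:Fells absorption principle I} is the positivity computation and this reduction.
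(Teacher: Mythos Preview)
Your proof is correct and takes essentially the same route as the paper: pass to the bundle C*-completion $\cC$ of $\cB$ via the canonical map (the paper's $\rho$, your $\kappa$), apply Theorem~\ref{thm:Fells absorption principle I} to $\cC$, and pull back. The only minor differences are that your positivity argument via Corollary~\ref{cor: weak form of characterization of positivity} is a touch more direct than the paper's (which instead invokes Theorem~\ref{thm:positivity} on $\cC$ and then \cite[XI 12.6]{FlDr88} to transfer positivity back to $\cB$), and that the induction-compatibility you spell out by hand is exactly the content of \cite[XI 12.6]{FlDr88}, which the paper simply cites.
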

\begin{proof}
	Let $\cC$ be the bundle C*-completion of $\cB$ and let $\rho\colon \cB\to \cC$ be the canonical quotient map of \cite[VIII 16.7]{FlDr88}.
	The construction of $\cC$ implies the existence of a unique representation $S\colon \cC\to \bB(X)$ such that $S\circ \rho = T.$
	
	Theorem \ref{thm:positivity} implies $S|_{\cC_{tH\tmu}}\otimes {}_t U $ is $\cC-$positive for every $t\in G.$
	Hence, by \cite[XI 12.6]{FlDr88}, the composition  $(S|_{\cC_{tH\tmu}}\otimes {}_t U)\circ (\rho|_{\cB_{tH\tmu}})\equiv T|_{\cB_{tH\tmu}}\otimes {}_tU$ is $\cB-$positive for all $t\in G.$
	
	We know that $S\otimes \Ind_H^G(U)\approx \{\Ind_{tH\tmu }^{\cC}(S|_{\cC_{tH\tmu}}\otimes {}_tU) \}_{t\in G},$ which implies 
	\begin{equation}\label{equ:almost there}
	(S\otimes \Ind_H^G(U))\circ \rho \approx \{\Ind_{tH\tmu }^{\cC}(S|_{\cC_{tH\tmu}}\otimes {}_tU)\circ \rho \}_{t\in G}.
	\end{equation}
	
	It is clear that $(S\otimes \Ind_H^G(U))\circ \rho = (S\circ \rho)\otimes \Ind_H^G(U) = T\otimes \Ind_H^G(U).$
	On the other hand, by \cite[XI 12.6]{FlDr88}, for all $t\in G$ we have
	\begin{align*}
	\Ind_{tH\tmu }^{\cC}(S|_{\cC_{tH\tmu}}\otimes {}_tU)\circ \rho 
	& = \Ind_{tH\tmu }^{\cB}\left( (S|_{\cC_{tH\tmu}}\otimes {}_tU)\circ (\rho|_{\cB_{tH\tmu}})\right) \\
	& =\Ind_{tH\tmu }^{\cB}( T|_{\cB_{tH\tmu}}\otimes {}_tU).
	\end{align*}
	Thus \eqref{equ:almost there} implies \eqref{equ:thesis Fell for banach algebraic bundles}.
\end{proof}

\subsection{Exel-Ng's absorption principle}

If the subgroup $H$ of theorem \ref{equ: action to induce} is normal in $G,$ then all the restrictions $T|_{\cB_{tH\tmu}}$ become $T|_{\cB_H}$ and $T\otimes\Ind_H^G(U)\approx \{\Ind_H^\cB(T|_{\cB_H}\otimes {}_tU)\}_{t\in G}.$
One can arrange $U$ to have ${}_tU\approx U$ for all $t\in G$.
Indeed, this is the case if $\intform{U}$ is faithful or $U:=\bigoplus_{s\in G} {}_s V$ for some other representation $V$ of $H.$

\begin{corollary}\label{cor:normal subgroup}
	If, in addition to the hypotheses of theorem~\ref{equ: action to induce}, we assume that $H$ is normal in $G$ and ${}_tU\approx U$ for all $t\in G,$ then $T\otimes\Ind_H^G(U)\approx \Ind_H^\cB(T|_{\cB_H}\otimes U).$
\end{corollary}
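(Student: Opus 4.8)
The plan is to derive this directly from Theorem~\ref{thm:Fells absorption principle I} by specializing to a normal subgroup and then collapsing the resulting family of induced representations to a single one. First I would invoke the absorption principle. Since $H$ is normal, $tH\tmu=H$ for every $t\in G$, so $\cB_{tH\tmu}=\cB_H$ and $T|_{\cB_{tH\tmu}}=T|_{\cB_H}$ for all $t$, while each conjugate ${}_tU$ is again a representation of $H$. Thus Theorem~\ref{thm:Fells absorption principle I} already reads
\begin{equation*}
T\otimes \Ind_H^G(U)\approx \{\Ind_H^{\cB}(T|_{\cB_H}\otimes {}_tU)\}_{t\in G},
\end{equation*}
and it remains only to identify the right-hand family with the single representation $R:=\Ind_H^\cB(T|_{\cB_H}\otimes U)$ up to weak equivalence.

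Next I would push the hypothesis ${}_tU\approx U$ through the two constructions at play. Fixing $t\in G$, tensoring with the fixed representation $T|_{\cB_H}$ preserves weak equivalence, so $T|_{\cB_H}\otimes {}_tU\approx T|_{\cB_H}\otimes U$ as representations of $\cB_H$; applying induction and using its continuity with respect to the regional topology \cite[XI 12.4]{FlDr88} then yields $\Ind_H^\cB(T|_{\cB_H}\otimes {}_tU)\approx R$ for every $t$. Since each member of the family is weakly equivalent to the single representation $R$, the family and $R$ have the same common kernel, so the family is weakly equivalent to $R$; transitivity of $\approx$ then gives $T\otimes\Ind_H^G(U)\approx \Ind_H^\cB(T|_{\cB_H}\otimes U)$.

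I expect no serious obstacle, since all the genuine content is already contained in Theorem~\ref{thm:Fells absorption principle I}; the only point requiring care is the verification that the weak equivalence ${}_tU\approx U$ of representations of $H$ survives both the tensor product with $T|_{\cB_H}$ and the induction functor. The induction step is exactly the cited continuity result, and the stability of weak containment under tensoring by a fixed factor is the same elementary fact already used in Section~\ref{ssec:example} (e.g.\ in the step $U^T\otimes V\preceq U^T\otimes \Ind_{\bF_2}^{\bF_3}(V)|_{\bF_2}$). With those two observations the collapse of the family to $R$, and hence the corollary, is immediate.
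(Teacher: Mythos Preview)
Your argument is correct and follows essentially the same route as the paper's proof: apply Theorem~\ref{thm:Fells absorption principle I}, use normality to reduce all restrictions to $T|_{\cB_H}$, then push ${}_tU\approx U$ through the tensor product and induction to collapse the family to a single representation. The only cosmetic difference is that the paper phrases the collapse via the two one-sided containments $\Ind_H^\cB(T|_{\cB_H}\otimes U)\preceq\{\cdots\}$ and $\Ind_H^\cB(T|_{\cB_H}\otimes {}_tU)\preceq \Ind_H^\cB(T|_{\cB_H}\otimes U)$ rather than working directly with $\approx$ as you do.
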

\begin{proof}
	It is clear that $\Ind_H^\cB(T|_{\cB_H}\otimes U)\preceq \{\Ind_{tH\tmu }^{\cB}(T|_{\cB_{tH\tmu}}\otimes {}_tU) \}_{t\in G}.$
	Besides, for any $t\in G$ we have $T|_{\cB_H}\otimes {}_t U\preceq T|_{\cB_H}\otimes U$ and this implies 
	\begin{equation*}
	\Ind_H^\cB(T|_{\cB_H}\otimes {}_t U)\preceq \Ind_H^\cB(T|_{\cB_H}\otimes U)
	\end{equation*}
	because induction preserves weak containment.
	Then $\Ind_H^\cB(T|_{\cB_H}\otimes U)\approx \{\Ind_{tH\tmu }^{\cB}(T|_{\cB_{tH\tmu}}\otimes {}_tU) \}_{t\in G}$ and the thesis follows from theorem~\ref{equ: action to induce} (and the transitivity of weak equivalence).
\end{proof}

The hypotheses of the corollary above are fulfilled if $T\colon \cB\to \bB(X)$ is any representation, $H=\{e\}$ and $U\colon H\to \bC,\  s\mapsto 1.$
In this case corollary \ref{cor:normal subgroup} becomes Exel-Ng's absorption principle.

\section{Weak containment and subgroups}

In \cite{ExNg} a Fell bundle $\cB$ is called amenable if $\regrepEN{\cB}\colon C^*(\cB)\to \bB(L^2_e(\cB))$ is faithful, which we write\footnote{Recall the convention introduced at the beginning of section \ref{sec:positivity}.} $C^*(\cB)=C^*_\red(\cB).$
This is equivalent to say any representation $T$ of $\cB$ is weakly contained in some other of the form $S\otimes \lambda$ (this is the so called \textit{weak containment property}, WCP).

The recent developments on amenable actions (\'a la Anantharaman-Delaroche) suggest Exel and Ng should have named \textit{amenable} those bundles having the approximation property they introduced in \cite{ExNg}.
We follow this stream, so amenability implies the WCP.

The reduced representation $\regrepEN{\cB},$ considered as a map from $C^*_{G\uparrow \cB}(\cB)=C^*_{G\uparrow G}(\cB)=C^*(\cB)$ to $C^*_{\{e\}\uparrow \cB}(\cB)=C^*_{\{e\}\uparrow G}(\cB)=C^*_\red(\cB),$ is a particular case of the maps $\mu$ and $\nu $ of

\begin{proposition}\label{prop: maps mu and nu}
	Let $\cB=\{B_t\}_{t\in G}$ be a Fell bundle.
	Given subgroups $K \leqslant H\leqslant G$ there exist unique *-homomorphisms $\mu^{HK}_\cB\colon C^*_{H\uparrow \cB}(\cB)\to C^*_{K\uparrow \cB}(\cB)$ and $\nu^{HK}_\cB\colon C^*_{H\uparrow G}(\cB)\to C^*_{K\uparrow G}(\cB)$ such that $\mu^{HK}_\cB\circ q^{H\uparrow \cB}=q^{K\uparrow \cB}$ and $\nu^{HK}_\cB\circ q^{H\uparrow G}_\cB=q^{K\uparrow G}_\cB.$
\end{proposition}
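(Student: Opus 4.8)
The plan is to construct each map as the factorization of one quotient map through another; such a factorization exists precisely when the corresponding kernels are nested, and uniqueness is then automatic because $q^{H\uparrow\cB}$ and $q^{H\uparrow G}_\cB$ are surjective. Write $J_H\subset C^*(\cB)$ for the intersection of the kernels $\ker(\intform{\Ind}_H^\cB(T))$ taken over all representations $T$ of $\cB_H$, so that $C^*_{H\uparrow\cB}(\cB)=C^*(\cB)/J_H$ with $q^{H\uparrow\cB}$ the quotient map, and define $J_K$ analogously. To produce $\mu^{HK}_\cB$ it suffices to check $J_H\subset J_K$, for then there is a unique $*$-homomorphism $C^*(\cB)/J_H\to C^*(\cB)/J_K$ sending $q^{H\uparrow\cB}(f)\mapsto q^{K\uparrow\cB}(f)$.

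First I would establish $J_H\subset J_K$ by induction in stages for Fell bundles. By Theorem \ref{thm:positivity} every representation is inducible, so all objects below are defined. Fix a representation $S$ of $\cB_K$. Since $K\leqslant H$ and $\cB_K=(\cB_H)_K$, the representation $\Ind_K^{\cB_H}(S)$ of $\cB_H$ makes sense, and induction in stages \cite[XI 12.15]{FlDr88} gives
\[
\Ind_K^\cB(S)\cong \Ind_H^\cB\big(\Ind_K^{\cB_H}(S)\big).
\]
Passing to integrated forms, $\ker(\intform{\Ind}_K^\cB(S))=\ker(\intform{\Ind}_H^\cB(\Ind_K^{\cB_H}(S)))\supset J_H$, the last inclusion holding because $\Ind_K^{\cB_H}(S)$ is a representation of $\cB_H$. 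Intersecting over all such $S$ yields $J_H\subset J_K$, and $\mu^{HK}_\cB$ follows.

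For $\nu^{HK}_\cB$ I would argue dually, using Proposition \ref{prop: CHGB universal property}. That proposition shows that whenever $\intform{T}$ and $\intform{U}$ are faithful the representation $T\intform{\otimes}\Ind_H^G(U)$ factors through a faithful representation of $C^*_{H\uparrow G}(\cB)$, so that $\ker(q^{H\uparrow G}_\cB)=\ker(T\intform{\otimes}\Ind_H^G(U))$, while $\ker(q^{H\uparrow G}_\cB)\subset\ker(T'\intform{\otimes}\Ind_H^G(U'))$ for every non-degenerate pair $(T',U')$. Now fix faithful non-degenerate $T$ (of $\cB$) and $W$ (of $K$). Induction in stages for groups \cite[XI 12.15]{FlDr88}, applied through Remark \ref{rem:trivial bundle}, gives $\Ind_K^G(W)\cong\Ind_H^G(\Ind_K^H(W))$, whence
\[
T\intform{\otimes}\Ind_K^G(W)\cong T\intform{\otimes}\Ind_H^G\big(\Ind_K^H(W)\big).
\]
The right-hand side has the form $T\intform{\otimes}\Ind_H^G(U)$ with $U:=\Ind_K^H(W)$ a non-degenerate representation of $H$, so its kernel contains $\ker(q^{H\uparrow G}_\cB)$; as the left-hand side has kernel $\ker(q^{K\uparrow G}_\cB)$ by Proposition \ref{prop: CHGB universal property} for $K$, we obtain $\ker(q^{H\uparrow G}_\cB)\subset\ker(q^{K\uparrow G}_\cB)$ and the factorization $\nu^{HK}_\cB$ exists.

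The only delicate points are bookkeeping rather than conceptual: one must ensure induction in stages applies in the stated generality, which Theorem \ref{thm:positivity} guarantees by removing any positivity restriction on the representations being induced, and that $\Ind_K^H(W)$ is non-degenerate so that Proposition \ref{prop: CHGB universal property} may be invoked with it, which is immediate for a genuine unitary group representation. Everything else is the formal universal property of quotient $*$-homomorphisms together with the surjectivity of $q^{H\uparrow\cB}$ and $q^{H\uparrow G}_\cB$, which yields uniqueness.
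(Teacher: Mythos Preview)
Your proof is correct and follows essentially the same route as the paper's: both arguments rest on induction in stages \cite[XI 12.15]{FlDr88}, applied once at the bundle level (any representation induced from $\cB_K$ factors as $\Ind_H^\cB\circ\Ind_K^{\cB_H}$) to build $\mu^{HK}_\cB$, and once at the group level ($\Ind_K^G\cong\Ind_H^G\circ\Ind_K^H$) combined with Proposition~\ref{prop: CHGB universal property} to build $\nu^{HK}_\cB$. The paper phrases the comparison as a norm inequality $\|q^{K\uparrow\cB}(f)\|\leq\|q^{H\uparrow\cB}(f)\|$ using a single faithful $\intform{T}$ on $C^*(\cB_K)$, while you phrase it as the kernel inclusion $J_H\subset J_K$ by intersecting over all $S$; these are equivalent formulations of the same idea.
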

\begin{proof}
	The existence of $\mu^{HK}_\cB$ is equivalent to the validity of $\| q^{K\uparrow \cB}(f) \|\leq \| q^{H\uparrow \cB}(f) \|$ for all $f\in C^*(\cB).$
	To prove this we take a faithful non degenerate  representation $\intform{T}\colon C^*(\cB_K)\to \bB(X).$
	By proposition \ref{prop: CHBB universal property} and induction in stages \cite[XI 12.15]{FlDr88},
	\begin{equation}\label{equ: existence of mu HKB}
	\| q^{K\uparrow \cB}(f) \|=\|\intform{\Ind}_H^\cB(T)_f\|=\|\intform{\Ind}_K^\cB(\Ind_H^{\cB_K}(T))_f\|\leq \|q^{H\uparrow \cB}(f)\|.
	\end{equation}
	
	The proof of the existence of $\nu^{HK}_\cB$ is similar. 
	It combines induction in stages with proposition \ref{prop: CHGB universal property}, the details are left to the reader with the suggestion to consult \eqref{equ: nuHK B isometric}.
\end{proof}

\begin{corollary}
	The restrictions of both $q^{H\uparrow \cB}$ and $q^{H\uparrow G}_\cB$ to $L^1(\cB)$ are faithful.
\end{corollary}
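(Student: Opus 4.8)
The plan is to reduce the statement to the single case $H=\{e\}$ and then to prove directly that the reduced representation $\lambda_\cB=\regrepEN{\cB}=\regrep{\{e\}\cB}$ is faithful on $L^1(\cB)$. For the reduction I would invoke Proposition~\ref{prop: maps mu and nu} with $K=\{e\}$. The relation $\mu^{H\{e\}}_\cB\circ q^{H\uparrow\cB}=q^{\{e\}\uparrow\cB}$, together with the identification $C^*_{\{e\}\uparrow\cB}(\cB)=C^*_\red(\cB)$ whose quotient map is exactly $\lambda_\cB$ (see the introduction and Remark~\ref{rem:H=e}), shows $\ker q^{H\uparrow\cB}\subset\ker\lambda_\cB$. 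Likewise $\nu^{H\{e\}}_\cB\circ q^{H\uparrow G}_\cB=q^{\{e\}\uparrow G}_\cB$, and since $q^{\{e\}\uparrow G}_\cB=\pi_{\{e\}}^\cB\circ\lambda_\cB$ with $\pi_{\{e\}}^\cB\colon C^*_\red(\cB)\to C^*_R(\cB)$ an isomorphism by \cite{ExNg}, we also get $\ker q^{H\uparrow G}_\cB\subset\ker\lambda_\cB$. Hence for both maps it suffices to show that $f\in L^1(\cB)$ and $\lambda_\cB(f)=0$ force $f=0$.

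For the core step I would argue as one shows the left regular representation of a group to be faithful on $L^1(G)$, transported to the Hilbert $B_e$-module $L^2_e(\cB)$. Fix such an $f$ and choose a self-adjoint approximate unit $(e_i)\subset C_c(\cB)$ of $L^1(\cB)$ whose members concentrate near $e\in G$, so that $\|f*e_i-f\|_1\to 0$ and $e_i^**\psi\to\psi$ uniformly on compacta for every continuous section $\psi$ (see \cite{FlDr88}). By Proposition~\ref{prop:construction of general reg rep} and the inner product of $L^2_e(\cB)$ one has $\langle\lambda_\cB(g)q(k),q(h)\rangle=(k^**g^**h)(e)$ for $g,k,h\in C_c(\cB)$; since $\lambda_\cB$ is $\|\cdot\|_1$-contractive and $k^**g^**h$ is a continuous section depending $\|\cdot\|_1$-continuously (uniformly on compacta) on $g$, this identity persists for $g\in L^1(\cB)$ and $k,h\in C_c(\cB)$. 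Taking $g=f$, $k=e_i$ and using $\lambda_\cB(f)=0$ gives $(e_i^**f^**h)(e)=0$ for all $i$ and all $h\in C_c(\cB)$. Letting $i\to\infty$, and using that $f^**h$ is continuous (a convolution of the $L^1$-section $f^*$ between members of $C_c(\cB)$) while $e_i^**(f^**h)\to f^**h$ uniformly near $e$, yields $(f^**h)(e)=\int_G f(t)^*h(t)\,dt=0$ in $B_e$ for every $h\in C_c(\cB)$.

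Finally I would deduce $f=0$ from this family of vanishing $B_e$-valued pairings by the localization underlying the proof that $q$ is injective in the construction of $L^2_e(\cB)$: fixing a Lebesgue point $t_0$, a continuous section $\sigma$ with $\sigma(t_0)=f(t_0)$, and scalar bumps $\phi$ concentrating at $t_0$, the sections $h=\phi\sigma$ give $\int_G f(t)^*\phi(t)\sigma(t)\,dt\to f(t_0)^*f(t_0)=0$, whence $f(t_0)=0$ for almost every $t_0$ and $f=0$ in $L^1(\cB)$. I expect the only genuine obstacle to be precisely this passage from $C_c(\cB)$ to $L^1(\cB)$: neither $q$ nor evaluation at $e$ is $\|\cdot\|_1$-continuous, so the limit in $i$ cannot be taken by $L^1$-density alone and must be routed through the continuity of $C_c(\cB)*L^1(\cB)$-convolutions and an approximate unit concentrating at $e$; everything else is bookkeeping already present in the construction of the induction module.
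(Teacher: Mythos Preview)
Your reduction to $H=\{e\}$ via Proposition~\ref{prop: maps mu and nu} is exactly what the paper does: it writes $\mu^{H\{e\}}_\cB\circ q^{H\uparrow\cB}=q^{\{e\}\uparrow\cB}=\regrepEN{\cB}=q^{\{e\}\uparrow G}_\cB=\nu^{H\{e\}}_\cB\circ q^{H\uparrow G}_\cB$ and thereby reduces both statements to the faithfulness of $\regrepEN{\cB}$ on $L^1(\cB)$.

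The difference is in how the core step is dispatched. The paper does not argue directly: it simply invokes \cite[VIII 16.4]{FlDr88}, which says that the direct sum of the generalized regular representations of $\cB$ (i.e.\ the representations $\intform{\Ind}_{\{e\}}^\cB(\pi)=\regrepEN{\cB}\otimes_\pi 1$ for $\pi$ a representation of $B_e$) is faithful on $L^1(\cB)$. Since each of these factors through $\regrepEN{\cB}$, faithfulness of $\regrepEN{\cB}|_{L^1(\cB)}$ follows in one line. Your approach instead reproves (a version of) this fact by hand, via an approximate unit concentrating at $e$, continuity of $L^1*C_c$ convolutions, and a Lebesgue-point argument for bundle-valued sections. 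The strategy is sound and you have correctly identified where the delicate point lies (neither $q$ nor evaluation at $e$ is $\|\cdot\|_1$-continuous), but the technical ingredients you need---continuity of $f^**h$ for $f\in L^1(\cB)$ and $h\in C_c(\cB)$, and a workable Lebesgue differentiation theorem for sections of $\cB$---are not entirely routine in the bundle setting and would require separate justification. The paper's citation buys you all of this for free; your route is more self-contained but carries real verification overhead that the paper avoids.
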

\begin{proof}
	We have $ \mu^{H\{e\}}_\cB \circ q^{H\uparrow\cB} = q^{\{e\}\uparrow \cB} = \regrepEN{\cB}=q^{\{e\}\uparrow G}_\cB = \nu^{H\{e\}}_\cB \circ q^{H\uparrow G}_\cB,$ so it suffices to consider the case $H=\{e\}.$
	In \cite[VIII 16.4]{FlDr88} Fell proves that the direct sum of the integrated forms of the generalized regular representations of $\cB$ is faithful.
	This is equivalent to say that $\intform{\Ind}_{\{e\}}^\cB(\pi)|_{L^1(\cB)}=\regrepEN{\cB}\otimes_\pi 1|_{L^1(\cB)}$ is faithful, with $\pi$ the universal representation of $B_e.$
	Thus $\regrepEN{\cB}|_{L^1(\cB)}$ is faithful.      
\end{proof}

\begin{corollary}\label{cor:mu and nu properties}
	In the situation of proposition \ref{prop: maps mu and nu}, the following claims hold:
	\begin{enumerate}
		\item\label{item: WCP and maps mu} $C^*(\cB)=C^*_\red(\cB)$ $\Leftrightarrow$ $C^*_{H\uparrow \cB}(\cB)=C^*_\red(\cB)$ for all $ H\leqslant G$ $\Leftrightarrow$ $C^*_{H\uparrow G}(\cB)=C^*_\red(\cB)$ for all $H\leqslant G.$
		\item\label{item: muHK isomorphism} If $C^*(\cB_H)=C^*_{K\uparrow \cB_H}(\cB_H),$ then  $C^*_{K\uparrow\cB}(\cB)=C^*_{H\uparrow \cB}(\cB).$
		\item If $C^*(H)=Q^H_K,$ then  $C^*_{H\uparrow G}(\cB)=C^*_{K\uparrow G}(\cB).$
		\item If $H$ is amenable, then $C^*_{H\uparrow \cB}(\cB)=C^*_\red(\cB)=C^*_{H\uparrow G}(\cB).$
	\end{enumerate}
\end{corollary}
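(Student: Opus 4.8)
The plan is to prove each of the four items by showing that the relevant canonical comparison map from Proposition~\ref{prop: maps mu and nu} is an isometric isomorphism. Every $\mu^{HK}_\cB$ and $\nu^{HK}_\cB$ is automatically surjective, since $\mu^{HK}_\cB\circ q^{H\uparrow\cB}=q^{K\uparrow\cB}$ with $q^{K\uparrow\cB}$ surjective (and likewise for $\nu$), so in each item it suffices to upgrade the norm inequality of Proposition~\ref{prop: maps mu and nu} to an equality. The engine throughout is induction in stages \cite[XI 12.15]{FlDr88} combined with the universal characterizations of $C^*_{H\uparrow\cB}(\cB)$ and $C^*_{H\uparrow G}(\cB)$ from Propositions~\ref{prop: CHBB universal property} and~\ref{prop: CHGB universal property}.

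For item~\eqref{item: WCP and maps mu} I would use the factorizations along the chain $\{e\}\leqslant H\leqslant G$. Since $q^{G\uparrow\cB}=\id$ and $q^{\{e\}\uparrow\cB}=\regrepEN{\cB}$, the composition rule gives $\mu^{H\{e\}}_\cB\circ q^{H\uparrow\cB}=\regrepEN{\cB}$, and for $H=G$ the map $\mu^{G\{e\}}_\cB$ is $\regrepEN{\cB}\colon C^*(\cB)\to C^*_\red(\cB)$ itself. If $C^*(\cB)=C^*_\red(\cB)$ then $\regrepEN{\cB}$ is faithful, which forces $q^{H\uparrow\cB}$ to be injective (hence an isomorphism, being surjective) and makes $\mu^{H\{e\}}_\cB=\regrepEN{\cB}\circ(q^{H\uparrow\cB})^{-1}$ an isomorphism, giving $C^*_{H\uparrow\cB}(\cB)=C^*_\red(\cB)$ for every $H$; taking $H=G$ recovers the converse. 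The identical argument with $\nu^{HK}_\cB$ and $q^{\{e\}\uparrow G}_\cB=\regrepEN{\cB}$ settles the equivalence with the family $C^*_{H\uparrow G}(\cB)$.

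For item~\eqref{item: muHK isomorphism} I would fix a faithful nondegenerate representation $T$ of $C^*(\cB_K)$ and write $\Ind_K^\cB(T)\cong\Ind_H^\cB(\Ind_K^{\cB_H}(T))$ by induction in stages. The hypothesis $C^*(\cB_H)=C^*_{K\uparrow\cB_H}(\cB_H)$ says exactly that $\Ind_K^{\cB_H}(T)$ has faithful integrated form on $C^*(\cB_H)$, so Proposition~\ref{prop: CHBB universal property} yields $\|\Ind_H^\cB(\Ind_K^{\cB_H}(T))_f\|=\|q^{H\uparrow\cB}(f)\|$, while the universal property for $K$ gives $\|\Ind_K^\cB(T)_f\|=\|q^{K\uparrow\cB}(f)\|$; comparing produces $\|q^{H\uparrow\cB}(f)\|=\|q^{K\uparrow\cB}(f)\|$, so $\mu^{HK}_\cB$ is isometric. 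Item (3) is the exact analogue on the $C^*_{H\uparrow G}$ side: with faithful $T$ on $C^*(\cB)$ and faithful $U$ on $C^*(K)$, induction in stages gives $\Ind_K^G(U)\cong\Ind_H^G(\Ind_K^H(U))$, the hypothesis $C^*(H)=Q^H_K$ makes $\Ind_K^H(U)$ faithful on $C^*(H)$, and Proposition~\ref{prop: CHGB universal property} turns this into $\|q^{H\uparrow G}_\cB(f)\|=\|q^{K\uparrow G}_\cB(f)\|$.

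Finally, item (4) I would deduce from (2) and (3) with $K=\{e\}$. On the $C^*_{H\uparrow G}$ side, amenability of $H$ means $C^*(H)=C^*_\red(H)=Q^H_{\{e\}}$, so (3) gives $C^*_{H\uparrow G}(\cB)=C^*_{\{e\}\uparrow G}(\cB)=C^*_\red(\cB)$. On the $C^*_{H\uparrow\cB}$ side, (2) with $K=\{e\}$ requires $C^*(\cB_H)=C^*_{\{e\}\uparrow\cB_H}(\cB_H)=C^*_\red(\cB_H)$, and this is where the only genuinely external input enters. I expect the main obstacle to be justifying that amenability of the \emph{group} $H$ forces amenability of the \emph{bundle} $\cB_H$, namely $C^*(\cB_H)=C^*_\red(\cB_H)$; this is the standard fact that every Fell bundle over an amenable group is amenable (see \cite{ExlibroAMS}). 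Granting it, (2) delivers $C^*_{H\uparrow\cB}(\cB)=C^*_\red(\cB)$ and the proof is complete.
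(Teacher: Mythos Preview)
Your proof is correct and follows essentially the same route as the paper's: item~(1) via the factorization of $\regrepEN{\cB}$ through the maps $\mu^{HK}_\cB$ (resp.\ $\nu^{HK}_\cB$), items~(2) and~(3) via induction in stages combined with Propositions~\ref{prop: CHBB universal property} and~\ref{prop: CHGB universal property}, and item~(4) by specializing to $K=\{e\}$ and invoking the fact that Fell bundles over amenable groups have the WCP. The only cosmetic difference is the reference for this last fact (the paper cites \cite{ExNg} rather than \cite{ExlibroAMS}).
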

\begin{proof}
	The first claim follows easily after one notices  $\mu^{G\{e\}}_{\cB} =\mu^{H\{e\}}_{\cB} \circ \mu^{GH}_{\cB};$ $\nu^{G\{e\}}_{\cB} =\nu^{H\{e\}}_{\cB} \circ \nu^{GH}_{\cB}$  and identifies $\mu^{G\{e\}}_{\cB}=\regrepEN{\cB}=\nu^{G\{e\}}_{\cB}.$
	For the second claim we go back to \eqref{equ: existence of mu HKB}.
	The hypothesis implies the integrated form of $\Ind_H^{\cB_K}(T)$ is faithful, so the inequality of \eqref{equ: existence of mu HKB} becomes an equality.
	
	The proof of the third claim is similar.
	Take faithful non degenerate representations $\intform{T}\colon C^*(\cB)\to \bB(X)$ and $\intform{U}\colon C^*(K)\to \bB(Y).$
	Then $\intform{\Ind}_K^H(U)$ is faithful and proposition \ref{prop: CHGB universal property} (together with induction in stages) implies that for all $f\in C^*(\cB)$
	\begin{equation}\label{equ: nuHK B isometric}
	\| q^{K\uparrow \cB}(f) \| = \|[T\intform{\otimes}\Ind_K^G(U)]_f\|
	= \|[T\intform{\otimes}\Ind_H^G(\Ind_K^H(U))]_f\|=\| q^{H\uparrow \cB}(f) \|,
	\end{equation}
	which clearly implies $\nu^{HK}_\cB$ is isometric.  
	
	If $H$ is amenable and we set $K=\{e\},$ then $C^*_{K\uparrow\cB}(\cB)=C^*_{K\uparrow G}(\cB)=C^*_\red(\cB);$ $C^*(H)=C^*_\red(H)=Q^H_K$ and  $C^*(\cB_H)= C^*_\red(\cB_H)$ (see \cite{ExNg}).
	Thus the last claim is a consequence of the preceding ones.
\end{proof}

The converse of claim \eqref{item: muHK isomorphism} of the corollary above is intimately related to question \eqref{item: q universal algebras agree} of the introduction.
If we put $K=\{e\}$ in the claim, we get $C^*(\cB_H)= C^*_\red(\cB_H)$ implies  $C^*_{H\uparrow\cB}(\cB)=C^*_\red(\cB).$
Whenever the converse holds, one can use claim \eqref{item: WCP and maps mu} of corollary \ref{cor:mu and nu properties} to deduce the WCP passes from $\cB$ to $\cB_H.$
This is false in general because $\cB$ may be the semidirect product bundle of a C*-dynamical system $(A,G,\alpha),$ in which case $\cB_H$ is that of the restricted system  $(A,H,\alpha|_H).$
By \cite[Section 5.3]{buss2020amenability}, it is not true that $A\rtimes_\alpha G = A\rtimes_{\red\alpha} G$ (i.e. $C^*(\cB)=C^*_\red(\cB)$) implies $A\rtimes_{\alpha|_H} H = A\rtimes_{\red\alpha|_H} H$ ($C^*(\cB_H)=C^*_\red(\cB_H)$).

In theorem \ref{thm:equivalence of weak containment of reduction} we show that whenever the normalizer of $H$ is open in $G,$ $\cB_H$ has the WCP if and only if $C^*_{H\uparrow\cB}(\cB)=C^*_\red(\cB).$
When applied to semidirect product bundles, this gives a condition on $H$ for the WCP to pass from $(A,G,\alpha)$ to $(A,H,\alpha|_H)$ (see corollary \ref{cor:WCP passes to certain subgroups}); giving a partial affirmative answer to Question (b) of \cite[Section 9]{ADamenability2002}.

We need a (probably well known) fact about the regular representations of subgroups.

\begin{proposition}\label{prop:weak equivalence regular rep}
	Let $G$ be a LCH group and $H$ a closed  subgroup of $G$ with open normalizer.
	If we denote $\lambda^G$ the left regular representation of $G,$ then $\lambda^G|_H\approx\lambda^H.$
\end{proposition}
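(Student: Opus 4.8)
The plan is to reduce the whole statement to the single identity $\lambda^G\cong\Ind_H^G(\lambda^H)$ and then split the asserted weak equivalence into its two containments, one of which is precisely what Lemma~\ref{lem:wc induction and restriction} is built to supply. First I would pass to the trivial bundle: under the identifications of Remark~\ref{rem:trivial bundle}, $\cT_G=\{\bC\delta_t\}_{t\in G}$ makes group induction coincide with bundle induction, so the discussion takes place among representations of the Fell bundle $\cT_G$ and of its reduction $(\cT_G)_H=\cT_H$. Writing $\kappa$ for the trivial representation of the trivial group $\{e\}$, we have $\lambda^H=\Ind_{\{e\}}^H(\kappa)$, so induction in stages \cite[XI 12.15]{FlDr88} gives $\Ind_H^G(\lambda^H)=\Ind_H^G(\Ind_{\{e\}}^H(\kappa))=\Ind_{\{e\}}^G(\kappa)=\lambda^G$. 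This identity is the backbone of the argument.

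For the containment $\lambda^H\preceq\lambda^G|_H$ I would invoke Lemma~\ref{lem:wc induction and restriction}(1) verbatim. Because the normalizer of $H$ is open, every non-degenerate representation $T$ of $(\cT_G)_H=\cT_H$ satisfies $T\preceq\Ind_H^{\cT_G}(T)|_{\cT_H}$; taking $T=\lambda^H$ and using the identity above yields $\lambda^H\preceq\Ind_H^G(\lambda^H)|_H=\lambda^G|_H$. This is the only step where the open–normalizer hypothesis is genuinely exercised, and it is the single place the bundle machinery is needed.

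For the reverse containment $\lambda^G|_H\preceq\lambda^H$ I would show that $\lambda^G|_H$ is unitarily equivalent to an amplification of $\lambda^H$. The left action $x\mapsto hx$ of $H$ on $(G,dx)$ is free and proper (as $H$ is closed), it preserves left Haar measure, and its orbits are the right cosets $Hx$; disintegrating $dx$ over the LCH quotient $H\backslash G$ produces $H$-invariant fibre measures, each of which must be a multiple of Haar measure of $H$ under the identification $h\mapsto hx$ of an orbit with $H$. Choosing a Borel section of $G\to H\backslash G$ I would assemble a unitary $L^2(G)\cong L^2(H)\otimes L^2(H\backslash G)$ intertwining $\lambda^G|_H$ with $\lambda^H\otimes 1$. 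Since a representation and any of its amplifications determine the same ideal of $C^*(H)$, they are weakly equivalent, so $\lambda^G|_H\approx\lambda^H\otimes 1\approx\lambda^H$. In fact this amplification picture already delivers both containments simultaneously, so the open–normalizer hypothesis is not needed for the \emph{truth} of the statement; within the paper's framework, however, the forward containment is most cleanly read off from Lemma~\ref{lem:wc induction and restriction}, which is where the hypothesis naturally enters.

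I expect the main obstacle to be the reverse containment in full LCH generality. The amplification description is transparent for second countable groups, where Borel sections of $G\to H\backslash G$ and disintegration of Radon measures over the orbit space are automatic, but for an arbitrary LCH group one must either cite the existence of a suitable section and the invariance of the fibre measures, or sidestep the measure theory altogether by quoting Herz's restriction theorem $A(G)|_H=A(H)$, which identifies the coefficients of $\lambda^G|_H$ with coefficients of $\lambda^H$ and hence gives $\lambda^G|_H\preceq\lambda^H$ directly. The forward containment, by contrast, is routine once $\lambda^G\cong\Ind_H^G(\lambda^H)$ is in hand, so essentially all the care goes into matching the restricted regular representation with a multiple of $\lambda^H$.
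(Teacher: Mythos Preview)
Your argument is correct, but the route for the containment $\lambda^G|_H\preceq\lambda^H$ is genuinely different from the paper's. You aim for a one-shot identification $\lambda^G|_H\cong\lambda^H\otimes 1$ via disintegration of $L^2(G)$ over $H\backslash G$ (or, failing that, Herz's restriction theorem), which works for any closed $H$ but, as you yourself flag, requires either second countability or an external citation to be airtight. The paper instead uses the open-normalizer hypothesis structurally: it first treats the case $H$ \emph{open}, where the discrete decomposition $L^2(G)=\bigoplus_{Z\in H\backslash G}L^2(Z)$ exhibits $\lambda^G|_H$ as a direct sum of copies of $\lambda^H$ with no measure theory needed; then the case $H$ \emph{normal}, where the bound $\|\intform{(\lambda^G|_H)}_f\|\leq\|\intform{\lambda^H}_f\|$ is obtained by a bare-hands computation using Weil's formula $\int_G=\int_{G/H}\int_H$ and estimating the inner $H$-integral by $\|\intform{\lambda^H}_f\|^2$ times a norm; finally the general case follows by passing through the normalizer $N$, using the open step for $N\leqslant G$ and the normal step for $H\trianglelefteq N$. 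So your approach buys greater generality at the price of importing a nontrivial fact, while the paper's is entirely self-contained and elementary but leans on the hypothesis in an essential way. For the forward containment $\lambda^H\preceq\lambda^G|_H$ the two proofs coincide: Lemma~\ref{lem:wc induction and restriction} applied to $T=\lambda^H$, combined with $\Ind_H^G(\lambda^H)\cong\lambda^G$ by induction in stages.
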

\begin{proof}
	Assume $H$ is open in $G$ and decompose $L^2(G)$ into a direct sum with respect to the right cosets $H\setminus G=\{Ht\colon t\in G\},$ that is $L^2(G)=\bigoplus_{Z\in H\setminus G} L^2(Z).$
	With this decomposition $\lambda^G|_H$ becomes a direct sum of representations unitary equivalent to $\lambda^H,$ so $\lambda^G|_H\approx \lambda^H.$
	
	We now assume that $H$ is normal in $G$ and that the left invariant Haar measures of $G,H$ and $G/H$ have been normalized so that $	\int_G f(t)\, dt = \int_{G/H} \int_H f(ts)\, dsd(tH)$
	for all $f\in C_c(G)$ (exactly as in \cite[III 13.17]{FlDr88}).
	Besides, we let $\Gamma\colon G\to \bR^+$ be the continuous homomorphism of \cite[III 13.20]{FlDr88}, i.e. the unique function such that $\int_H f(sts^{-1})\, dt =\Gamma(s)\int_H f(t)\, dt $ for all $f\in C_c(H)$ and $s\in G.$

	For any $f\in C_c(H)$ and $\xi\in C_c(G)\subset L^2(G)$ we have
	\begin{align}
	\| \intform{(\lambda^G|_H)}_f \xi \|^2
	& =\int_G\int_H f^**f(t)\xi(\tmu s)\overline{\xi(s)}dtds\nonumber \\
	& = \int_{G/H} \int_H\int_H f^**f(t)\xi(\tmu sr)\overline{\xi(sr)}\, dtdrd(sH) \nonumber \\
	& =	\int_{G/H} \Gamma(s)\int_H\int_H f^**f(t)\xi(\tmu rs)\overline{\xi(rs)}dtdrd(sH).\label{equ:bound reg rep}
	\end{align}
	
	To bound the inner double integral in the last term above we define, for each $s\in G,$ $\xi_s\in C_c(H)\subset L^2(H)$ by $\xi_s(z)=\xi(zs).$
	Then 
	\begin{equation*}
	\int_H\int_H f^**f(t)\xi(\tmu rs)\overline{\xi(rs)}dtdr
	=\int_H f^**f(t)\langle \lambda^H_t \xi_s,\xi_s\rangle \, dt
	\leq \|\intform{\lambda^H}_f\|^2\|\xi_s\|^2
	\end{equation*}
	and we can continue \eqref{equ:bound reg rep} to get
	\begin{equation*}
	\| \intform{(\lambda^G|_H)}_f \xi \|^2
	\leq \|\intform{\lambda^H}_f\|^2 \int_{G/H} \int_H \xi(sr)\overline{\xi(sr)}\, drd(sH)
	= \|\intform{\lambda^H}_f\|^2\|\xi\|^2.
	\end{equation*}
	Thus $\| \intform{(\lambda^G|_H)}_f\|\leq \|\intform{\lambda^H}_f\|$ for all $f\in C_c(H)$ and this implies $\lambda^G|_H\preceq \lambda^H.$
	Lemma \ref{lem:wc induction and restriction} gives $\lambda^H\preceq \lambda^G|_H,$ so $\lambda^H\approx \lambda^G|_H.$
	
	For the general case we define $N$ as the normalizer of $H$ in $G$ and use the arguments of the proof of lemma \ref{lem:wc induction and restriction} to get $
	\lambda^G|_H =\lambda^G|_N|_H\approx \lambda^N|_H\approx \lambda^H,$ which implies $\lambda^G|_H\approx \lambda^H.$
\end{proof}

\begin{theorem}\label{thm:equivalence of weak containment of reduction}
	Let $\cB=\{B_t\}_{t\in G}$ be a Fell bundle and $H\leqslant G.$
	If the normalizer of $H$ is open, then $C^*_{H\uparrow \cB }(\cB)=C^*_\red(\cB)$ if and only if $C^*(\cB_H)=C^*_\red(\cB_H)$ and these conditions hold if $C^*_{H\uparrow G}(\cB)=C^*_\red(\cB).$
\end{theorem}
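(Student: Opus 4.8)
The plan is to prove the stated equivalence and then the final implication, threading everything through the canonical surjections $\Theta^H_\cB\colon C^*_{H\uparrow G}(\cB)\to C^*_{H\uparrow\cB}(\cB)$ of corollary \ref{cor: maps Theta} (available since the normalizer of $H$ is open) and $\mu^{H\{e\}}_\cB\colon C^*_{H\uparrow\cB}(\cB)\to C^*_{\{e\}\uparrow\cB}(\cB)=C^*_\red(\cB)$ of corollary \ref{cor:mu and nu properties}. One direction is free of any hypothesis on the normalizer: since $C^*_{\{e\}\uparrow\cB_H}(\cB_H)=C^*_\red(\cB_H)$, taking $K=\{e\}$ in claim \eqref{item: muHK isomorphism} of corollary \ref{cor:mu and nu properties} turns $C^*(\cB_H)=C^*_\red(\cB_H)$ directly into $C^*_\red(\cB)=C^*_{H\uparrow\cB}(\cB)$.

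The converse is the crux. I would fix a non degenerate $T\colon\cB\to\bB(X)$ with $\intform{T}$ faithful. Because the normalizer of $H$ is open, lemma \ref{lem:wc induction and restriction}(2) makes $S:=T|_{\cB_H}$ a faithful representation of $\cB_H$; proposition \ref{prop: CHBB universal property} then gives $\|q^{H\uparrow\cB}(f)\|=\|\intform{\Ind}_H^\cB(S)_f\|$, while Exel--Ng's absorption principle \eqref{equ:Exel Ng abst prin} applied to $T$ (whose fiber restriction $T|_{B_e}$ is faithful) gives $\|\regrepEN{\cB}(f)\|=\|[T\otimes\lambda^G]_f\|$. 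The hypothesis $C^*_{H\uparrow\cB}(\cB)=C^*_\red(\cB)$ forces $\mu^{H\{e\}}_\cB$ to be isometric, i.e. $\|\intform{\Ind}_H^\cB(S)_f\|=\|[T\otimes\lambda^G]_f\|$, so $\Ind_H^\cB(S)\preceq T\otimes\lambda^G$.

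Now I would restrict to $\cB_H$. Restriction preserves weak containment \cite[VIII 21.20]{FlDr88}, and $(T\otimes\lambda^G)|_{\cB_H}=S\otimes\lambda^G|_H$; combining this with proposition \ref{prop:weak equivalence regular rep} ($\lambda^G|_H\approx\lambda^H$) and Exel--Ng's principle for $\cB_H$ yields
\begin{equation*}
\Ind_H^\cB(S)|_{\cB_H}\preceq S\otimes\lambda^G|_H\approx S\otimes\lambda^H\approx\Ind_{\{e\}}^{\cB_H}(S|_{B_e}).
\end{equation*}
Since $S\preceq\Ind_H^\cB(S)|_{\cB_H}$ by lemma \ref{lem:wc induction and restriction}(1), transitivity gives $S\preceq\Ind_{\{e\}}^{\cB_H}(S|_{B_e})$; as $S|_{B_e}=T|_{B_e}$ is faithful, this says $\intform{S}\preceq\regrepEN{\cB_H}$. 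The reverse weak containment always holds and $\intform{S}$ is faithful, so $\regrepEN{\cB_H}$ is isometric and $C^*(\cB_H)=C^*_\red(\cB_H)$, completing the equivalence.

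For the last assertion, note that $\Theta^H_\cB\circ q^{H\uparrow G}_\cB=q^{H\uparrow\cB}$ and $\mu^{H\{e\}}_\cB\circ q^{H\uparrow\cB}=q^{\{e\}\uparrow\cB}=\regrepEN{\cB}$, so the composite $\mu^{H\{e\}}_\cB\circ\Theta^H_\cB$ is the canonical map $C^*_{H\uparrow G}(\cB)\to C^*_\red(\cB)$. The hypothesis $C^*_{H\uparrow G}(\cB)=C^*_\red(\cB)$ makes this composite an isomorphism; since $\Theta^H_\cB$ is surjective and the composite is injective, $\Theta^H_\cB$ is injective, hence bijective, forcing $\mu^{H\{e\}}_\cB$ to be an isomorphism as well. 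Thus $C^*_{H\uparrow\cB}(\cB)=C^*_\red(\cB)$, and the equivalence just established yields $C^*(\cB_H)=C^*_\red(\cB_H)$. The main obstacle is precisely the converse step of pushing the equality $\|q^{H\uparrow\cB}(f)\|=\|\regrepEN{\cB}(f)\|$ down to $\cB_H$: the openness of the normalizer is indispensable there, entering through the faithfulness of $S=T|_{\cB_H}$ and the containment $S\preceq\Ind_H^\cB(S)|_{\cB_H}$ (lemma \ref{lem:wc induction and restriction}) and through $\lambda^G|_H\approx\lambda^H$ (proposition \ref{prop:weak equivalence regular rep}); without it, the conjugate subgroups $tH\tmu$ appearing in theorem \ref{thm:Fells absorption principle I} would block the restriction argument.
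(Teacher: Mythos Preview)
Your proof is correct and follows essentially the same route as the paper's: the easy implication via corollary \ref{cor:mu and nu properties}\eqref{item: muHK isomorphism}; the converse by taking a faithful $\intform{T}$, using lemma \ref{lem:wc induction and restriction} to get faithfulness of $T|_{\cB_H}$ and the containment $T|_{\cB_H}\preceq\Ind_H^\cB(T|_{\cB_H})|_{\cB_H}$, then restricting the weak equivalence $\Ind_H^\cB(T|_{\cB_H})\approx T\otimes\lambda^G$ to $\cB_H$ and applying proposition \ref{prop:weak equivalence regular rep}; and the last assertion via the factorization $\nu^{H\{e\}}_\cB=\mu^{H\{e\}}_\cB\circ\Theta^H_\cB$. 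The only cosmetic difference is that the paper stops at $T|_{\cB_H}\preceq T|_{\cB_H}\otimes\lambda^H$ and declares the WCP, whereas you push one step further via Exel--Ng's principle for $\cB_H$ to phrase it as $\intform{S}\preceq\regrepEN{\cB_H}$; these are equivalent.
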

\begin{proof}
	We have $\mu^{H\{e\}}_\cB\circ \Theta^H_\cB=\nu^{H\{e\}}_\cB.$
	Hence, $C^*_{H\uparrow G}(\cB)=C^*_\red(\cB)$ (i.e. $\nu^{H\{e\}}_\cB$ an isomorphism) if and only if $C^*_{H\uparrow G}(\cB)=C^*_{H\uparrow \cB}(\cB)=C^*_\red(\cB)$ (both $\mu^{H\{e\}}_\cB$ and $\Theta^H_\cB$ are isomorphisms).
	By corollary \ref{cor:mu and nu properties}, $C^*(\cB_H)=C^*_\red(\cB_H)$ implies $C^*_{H\uparrow \cB }(\cB)=C^*_\red(\cB).$
	
	Suppose $C^*_{H\uparrow \cB}(\cB)=C^*_\red(\cB)$ and take a faithful non degenerate representation $\intform{T}$ of $\cB.$
	By proposition \ref{prop: CHBB universal property}, $\Ind_H^\cB (T|_{\cB_H})\approx \Ind_{\{e\}}^\cB (T|_{B_e})$ and this, together with lemma \ref{lem:wc induction and restriction}, yields $T|_{\cB_H}\preceq \Ind_H^\cB (T|_{\cB_H})|_{\cB_H}\approx \Ind_{\{e\}}^\cB (T|_{B_e})|_{\cB_H}$ $\Rightarrow$ $T|_{\cB_H}\preceq \Ind_{\{e\}}^\cB (T|_{B_e})|_{\cB_H}.$
	By Exel-Ng's absorption principle, $\Ind_{\{e\}}^\cB (T|_{B_e})|_{\cB_H}\approx T|_{\cB_H}\otimes \lambda^G|_H$ and from lemma \ref{prop:weak equivalence regular rep} we get $\Ind_{\{e\}}^\cB (T|_{B_e})|_{\cB_H}\approx T|_{\cB_H}\otimes \lambda^H$ and we conclude $T|_{\cB_H}\preceq T|_{\cB_H}\otimes \lambda^H.$
	Lemma \ref{lem:wc induction and restriction} implies the integrated for of $T|_{\cB_H}$ is faithful, thus $\cB_H$ has the WCP.
\end{proof}

\begin{corollary}\label{cor:WCP passes to certain subgroups}
	If $\cB=\{B_t\}_{t\in G}$ is a Fell bundle, $C^*(\cB)=C^*_\red(\cB)$ and the normalizer of $H\leqslant G$ is open, then $C^*(\cB_H)=C^*_\red(\cB_H).$
\end{corollary}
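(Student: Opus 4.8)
The plan is to obtain this as a direct concatenation of the two results that immediately precede it, so almost no new work is needed: the openness of the normalizer has already been exploited in Theorem \ref{thm:equivalence of weak containment of reduction}, and the passage from the global hypothesis $C^*(\cB)=C^*_\red(\cB)$ to the relevant intermediate algebra has already been recorded in Corollary \ref{cor:mu and nu properties}. My job is just to line up the hypotheses.

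First I would feed the hypothesis $C^*(\cB)=C^*_\red(\cB)$ into claim \eqref{item: WCP and maps mu} of Corollary \ref{cor:mu and nu properties}, which asserts that $C^*(\cB)=C^*_\red(\cB)$ is equivalent to $C^*_{H'\uparrow \cB}(\cB)=C^*_\red(\cB)$ for \emph{all} $H'\leqslant G$. Specializing to the given $H$, this yields $C^*_{H\uparrow \cB}(\cB)=C^*_\red(\cB)$. Next I would invoke Theorem \ref{thm:equivalence of weak containment of reduction}: since the normalizer of $H$ is assumed open, that theorem gives the equivalence $C^*_{H\uparrow \cB}(\cB)=C^*_\red(\cB)\iff C^*(\cB_H)=C^*_\red(\cB_H)$. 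Combining the two, $C^*(\cB_H)=C^*_\red(\cB_H)$, as desired. (One could equally route through $C^*_{H\uparrow G}(\cB)$, using that $C^*_{H\uparrow G}(\cB)=C^*_\red(\cB)$ also implies the conclusion by the last clause of Theorem \ref{thm:equivalence of weak containment of reduction}, but the $C^*_{H\uparrow \cB}$ route is the shortest.)

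There is no genuine obstacle inside this corollary itself; the content is entirely inherited. The point worth flagging is that the real leverage sits upstream in Theorem \ref{thm:equivalence of weak containment of reduction}, whose ``only if'' direction is where the openness of the normalizer is essential: it is used via Exel--Ng's absorption principle together with Proposition \ref{prop:weak equivalence regular rep} (giving $\lambda^G|_H\approx\lambda^H$) and Lemma \ref{lem:wc induction and restriction} (giving $T\preceq \Ind_H^\cB(T)|_{\cB_H}$ and faithfulness of $\intform{T|_{\cB_H}}$). Accordingly, I would keep the write-up to the two citations above and resist re-deriving any of that machinery; the openness hypothesis is precisely what prevents the counterexample mentioned after Corollary \ref{cor:mu and nu properties}, where $C^*(\cB)=C^*_\red(\cB)$ fails to descend to an arbitrary $\cB_H$.

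\begin{proof}
	By claim \eqref{item: WCP and maps mu} of corollary \ref{cor:mu and nu properties}, the hypothesis $C^*(\cB)=C^*_\red(\cB)$ gives $C^*_{H\uparrow \cB}(\cB)=C^*_\red(\cB).$ Since the normalizer of $H$ is open, theorem \ref{thm:equivalence of weak containment of reduction} then yields $C^*(\cB_H)=C^*_\red(\cB_H).$
\end{proof}
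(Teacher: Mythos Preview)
Your proof is correct and matches the paper's own argument exactly: the paper's proof is the single sentence ``This is a straightforward consequence of corollary \ref{cor:mu and nu properties} and theorem \ref{thm:equivalence of weak containment of reduction},'' which is precisely the two-step concatenation you wrote out.
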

\begin{proof}
	This is a straightforward consequence of corollary \ref{cor:mu and nu properties} and theorem \ref{thm:equivalence of weak containment of reduction}.
\end{proof}

\bibliography{/home/damian/Nextcloud/Bibliografia/FerraroBiblio}

\begin{thebibliography}{10}

\bibitem{Ab03}
Fernando Abadie.
\newblock Enveloping actions and {T}akai duality for partial actions.
\newblock {\em J. Funct. Anal.}, 197(1):14--67, 2003.

\bibitem{AbFrrEquivalence}
Fernando Abadie and Dami\'{a}n Ferraro.
\newblock Equivalence of {F}ell bundles over groups.
\newblock {\em J. Operator Theory}, 81(2):273--319, 2019.

\bibitem{ADamenability2002}
Claire Anantharaman-Delaroche.
\newblock Amenability and exactness for dynamical systems and their
  {$C^\ast$}-algebras.
\newblock {\em Trans. Amer. Math. Soc.}, 354(10):4153--4178, 2002.

\bibitem{blattner1961induced}
Robert~J Blattner.
\newblock On induced representations.
\newblock {\em American Journal of Mathematics}, 83(1):79--98, 1961.

\bibitem{buss2020amenability}
Alcides Buss, Siegfried Echterhoff, and Rufus Willett.
\newblock Amenability and weak containment for actions of locally compact
  groups on {$C^\ast$}-algebras.
\newblock {\em arXiv preprint arXiv:2003.03469}, 2020.

\bibitem{Derighetti}
Antoine Derighetti.
\newblock Some remarks on {$L^1(G)$}.
\newblock {\em Math. Z.}, 164(2):189--194, 1978.

\bibitem{ExlibroAMS}
Ruy Exel.
\newblock {\em Partial dynamical systems, Fell bundles and applications},
  volume 224 of {\em Mathematical Surveys and Monographs}.
\newblock American Mathematical Soc., Providence, RI, 2017.

\bibitem{ExNg}
Ruy Exel and Chi-Keung Ng.
\newblock Approximation property of {$C^\ast$}-algebraic bundles.
\newblock {\em Mathematical Proceedings of the Cambridge Philosophical
  Society}, 132:509--522, 5 2002.

\bibitem{Fell}
J.~M.~G. Fell.
\newblock {\em Induced representations and {B}anach {$*$}-algebraic bundles}.
\newblock Lecture Notes in Mathematics, Vol. 582. Springer-Verlag, Berlin-New
  York, 1977.
\newblock With an appendix due to A. Douady and L. Dal Soglio-H\'{e}rault.

\bibitem{FlDr88}
J.~M.~G. Fell and R.~S. Doran.
\newblock {\em Representations of {$^*$}-algebras, locally compact groups, and
  {B}anach {$*$}-algebraic bundles.}, volume 125--126 of {\em Pure and Applied
  Mathematics}.
\newblock Academic Press Inc., Boston, MA, 1988.

\bibitem{KaLAQu2013}
S.~Kaliszewski, Magnus~B. Landstad, and John Quigg.
\newblock Exotic group {$C^\ast$}-algebras in noncommutative duality.
\newblock {\em New York J. Math.}, 19:689--711, 2013.

\bibitem{Mk52}
George~W. Mackey.
\newblock Induced representations of locally compact groups. {I}.
\newblock {\em Ann. of Math. (2)}, 55:101--139, 1952.

\bibitem{mckee2020amenable}
Andrew McKee and Reyhaneh Pourshahami.
\newblock Amenable and inner amenable actions and approximation properties for
  crossed products by locally compact groups.
\newblock {\em Canadian Mathematical Bulletin}, pages 1--19, 2020.

\bibitem{Raeburn1992Crossed}
Iain Raeburn.
\newblock On crossed products by coactions and their representation theory.
\newblock {\em Proc. London Math. Soc. (3)}, 64(3):625--652, 1992.

\bibitem{Raeburn1998morita}
Iain Raeburn and Dana~P. Williams.
\newblock {\em Morita equivalence and continuous-trace {$C^\ast$}-algebras}.
\newblock Number~60. American Mathematical Soc., 1998.

\bibitem{Rf74}
Marc~A. Rieffel.
\newblock Induced representations of {$C^\ast$}-algebras.
\newblock {\em Advances in Math.}, 13:176--257, 1974.

\end{thebibliography}
\bibliographystyle{plain}
\end{document}